\newtheorem{theorem}{Theorem}[section]
\newtheorem{proposition}[theorem]{Proposition}
\newtheorem{corollary}[theorem]{Corollary}
\newtheorem{lemma}[theorem]{Lemma}
\theoremstyle{definition}
\newtheorem{definition}[theorem]{Definition}
\newtheorem{example}[theorem]{Example}
\theoremstyle{remark}
\newtheorem{remark}[theorem]{Remark}
\numberwithin{equation}{section}
\newcommand{\dist}{\ensuremath{\mathrm{dist} }}
\newcommand{\grad}{\ensuremath{\mathrm{grad}\ }}
\newcommand{\tub}{\ensuremath{\mathrm{Tub} }}
\newcommand{\F}{\ensuremath{\mathcal{F}}}
\newcommand{\singularF}{\ensuremath{\mathcal{X}_{F}}}
\newcommand{\codim}{\ensuremath{\mathrm{codim}\ }}
\newcommand{\id}{\ensuremath{\mathrm{id}}}
\newcommand{\RR}{\mathbb R}
\newcommand{\metric}{\ensuremath{ \mathrm{g} }}
\newcommand{\ip}[2]{\ensuremath{\langle{#1},{#2}\rangle}}
\newcommand{\minimalstratum}{\ensuremath{ \mathcal{M} }}
\begin{document}

\title{Progress in  the Theory of Singular Riemannian Foliations}

\author{Marcos M. Alexandrino}

\author{Rafael Briquet }

\author{Dirk T\"{o}ben}
%\thanks{ }

\address{Marcos M. Alexandrino (corresponding author),Rafael Briquet, Dirk T\"{o}ben \hfill\break\indent
Instituto de Matem\'{a}tica e Estat\'{\i}stica\\
Universidade de S\~{a}o Paulo, \hfill\break\indent
 Rua do Mat\~{a}o 1010,05508 090 S\~{a}o Paulo, Brazil}
\email{marcosmalex@yahoo.de, malex@ime.usp.br}

%\address{Marcos M. Alexandrino \hfill\break\indent
%Instituto de Matem\'{a}tica e Estat\'{\i}stica\\
%Universidade de S\~{a}o Paulo, \hfill\break\indent
% Rua do Mat\~{a}o 1010,05508 090 S\~{a}o Paulo, Brazil}
%\email{marcosmalex@yahoo.de, malex@ime.usp.br}

%%\address{Dirk T\"oben \hfill\break\indent
%Instituto de Matem\'{a}tica e Estat\'{\i}stica\\
%Universidade de S\~{a}o Paulo, \hfill\break\indent
% Rua do Mat\~{a}o 1010,05508 090 S\~{a}o Paulo, Brazil}
%\email{dtoeben@googlemail.com}

%\address{Gudlaugur Thorbergsson \\
%???}
%\email{??}

\thanks{The first author was supported by a
research productivity scholarship  from CNPq-Brazil
 and partially supported by FAPESP (S\~{a}o Paulo, Brazil).  The second author contribution was written at IME-USP
while researching for his MSc \cite{Rafael} and was
supported by a  research scholarship from CNPq. The third author was also supported by a scholarship from CNPq-Brazil. }

\subjclass[2000]{Primary 53C12, Secondary 57R30}

%\date{ 2010.}

\keywords{Riemannian foliations, polar actions  and variationally complete actions}

\begin{abstract}
A singular foliation is called a singular Riemannian foliation (SRF) if every geodesic that is perpendicular to one leaf is perpendicular to every leaf it meets.
A typical example is the partition of a complete Riemannian manifold into orbits of an isometric action.

In this survey, we provide an introduction to the theory of SRFs, leading from the foundations to recent developments in research on this subject. Sketches of proofs are included and useful techniques are emphasized.
We study the local structure of SRFs in general and under curvature conditions. We review the solution of the Palais-Terng problem on integrability of the horizontal distribution. Important special classes of SRFs, like polar and variationally complete foliations and their connections, are treated. A characterisation of SRFs whose leaf space is an orbifold is given. Moreover, desingularizations of SRFs are studied and applications, e.g., to Molino's conjecture, are presented.

\end{abstract}

%\end{frontmatter}

\maketitle

\tableofcontents

\section{Introduction}
%Anfang

A partition $\F$ of a complete Riemannian manifold $M$ into connected immersed submanifolds (the \emph{leaves}) is called a {\it singular Riemannian foliation} (SRF for short) if it satisfies the following two conditions:

\begin{enumerate}
\item $\F$ is a {\it singular foliation},
i.e., the module $\singularF$ of smooth vector fields on $M$ that are tangential at each point to the corresponding leaf acts transitively on each leaf. In other words, for each leaf $L$ and each $v\in TL$ with footpoint $p,$ there is $X\in \singularF$ with $X(p)=v$.
\item   Every geodesic that is perpendicular at one point to a leaf is \emph{horizontal}, i.e.,  is perpendicular to every leaf it meets.
\end{enumerate}

A leaf $L$ of $\F$ (and each point in $L$) is called \emph{regular} if the dimension of $L$ is maximal, otherwise $L$ is called {\it singular}.

SRFs were defined by Molino \cite{Molino} in his study of Riemannian foliations. His main example in mind were the decomposition of the ambient manifold by the leaf closures of a Riemannian foliation. But he also noticed that the orbit decomposition of an isometric actions of a Lie group provides an example. It was later realized that the notion is also suited to describe the set of parallel submanifolds of an isoparametric submanifold. This generality of the notion of a SRF together with the simplicity of its definition makes a SRF an attractive and worthwhile object to study.

The intention of this paper is to give an introduction to singular Riemannian foliations and to present recent  results.
%A recurring theme in this paper will be the technique of  changing the metric.

In Section \ref{sec:local} we want to review  some local structural theorems for SRFs. For instance, we prove in Theorem \ref{lemma-metric-in-S} that the restriction of a SRF to a slice of a leaf lifted to its normal space via the exponential map is again a SRF (the latter is called the \emph{infinitesimal foliation}). This reduces the local study of a SRF to Euclidean space just as the slice representation in the theory of proper Lie group actions.

Another important tool is presented in Section \ref{sec:Bottequifocality}. We know that for an isometric Lie group action on a complete connected manifold we can recover the whole orbit decomposition by the $G$-equivariance of the normal exponential map of a principle orbit $G\cdot p$, more precisely, $G \cdot\exp\xi=\exp(G\xi)$ for each normal vector $\xi$ of $G\cdot p$. In Theorem \ref{thm-s.r.f.-equifocal} we prove an analogue for SRFs which we call \emph{Bott-equifocality}.

A rich class of SRFs is provided by polar actions, i.e., isometric Lie group actions such that through every point there is a submanifold, called a {\em section}, that meets every orbit and always orthogonally and in the regular orbits even transversally. This is equivalent to the integrability of the normal distribution on the regular stratum by Theorem \ref{teo-integra-polar}. A typical example of a polar action is the action of a compact Lie group on itself by conjugation; the maximal tori are the sections. More generally the isotropy actions on symmetric spaces are polar with maximal flats as sections. A very similar structure can be found in submanifold theory. The decomposition by parallel submanifolds of an isoparametric submanifold in the sphere or in Euclidean space and of their generalization, an equifocal submanifold in a symmetric space of compact type, exhibit similar properties, see e.g.,  Thorbergson \cite{Th,ThSurvey2,ThSurvey3};   Berndt, Console and Olmos \cite{BerndtConsoleOlmos} and  Terng and Palais \cite{PalaisTerng}. Now both these classes can be subsumed into a more general notion, namely, into SRFs admitting sections, nowadays called polar SRFs, where the definition of a section is adapted in the obvious way. In fact, some of the properties that are shared by polar actions and isoparametric foliations can be derived for polar foliations.
For a polar action the slice theorem can be strengthened. It says that the isotropy action on a slice of an orbit is again a polar action. Likewise the slice theorem for SRFs can be strengthened to say that the restriction of a polar foliation to a slice is again a polar foliation, see Theorem \ref{teo-slice-polar}.
These and further results are explored in Section \ref{sec:polar}.

Another class of SRFs are variationally complete foliations (VC for short).
Variational completeness for a SRF/action is the absence of conjugate points in the leaf/orbit space.
The relations between polar and VC actions have been investigated by different authors as Bott, Samelson, Conlon and others. For instance,
hyperpolar foliations, i.e., polar actions with flat sections, are VC. The converse is true if the ambient manifold has nonnegative curvature. This result which is due to Lytchak and Thorbergsson is reviewed in Section \ref{sec:VC}.

Recently Lytchak and Thorbergsson defined a SRF to be \emph{infinitesimal polar} if each of its  infinitesimal foliations is polar. It follows from the slice theorem (Theorem \ref{teo-slice-polar}) mentioned above that polar foliations are infinitesimal polar. Due to Lytchak and Thorbergson VC foliations are also infinitesimally polar, see Theorem \ref{teovc}.
As we will see in Section \ref{sec:infpolar}, infinitesimally polar foliations are characterized as those SRFs whose local leaf space is an orbifold. Another equivalent condition is that the curvature of the leaf space  does not explode, see Theorem \ref{thm-infpol}.

In Section \ref{sec:desingularization} we discuss the desingularization of SRFs. We say that a SRF $\F$ on $M$ admits a \emph{desingularization} if there exists a foliated map from a (desingularized) Riemannian foliation to $(M,\F)$.
When the SRF is infinitesimally polar there is a desingularization that preserves the transverse length metric, see Theorem \ref{thm:desing}. On the other hand, if for a SRF there is a desingularization that preserves the transverse length metric, the leaf space of the SRF coincides with the leaf space of the desingularized  Riemannian foliation, and is therefore an orbifold. Thus the SRF is infinitesimally polar by the above characterization.
In general the leaf space of a SRF $\F$ is not an orbifold; in fact we have already discussed restrictions for a SRF to have an orbifold as leaf space. However, $M/\F$ can be arbitrarily  well approximated in the sense of Gromov-Hausdorff by a Riemannian orbifold, that is the leaf space of a desingularized  Riemannian foliation; see Corollary \ref{cor-gromov-hausdorff-convergence}.

We  present a selection of recent results on SRFs that are very related to each other. In paticular, many proof sketches presented here use
changes of metrics or Bott-equifocality presented respectively in Section 2 and 3.
%a technique of changing the metric (e.g. for Theorems \ref{lemma-almost-product},\ref{lemma-metric-in-S},\ref{thm-s.r.f.-equifocal},
%\ref{thm-infpol},\ref{thm-Blowup-srf}).\marginpar{\tiny konkreter}
 In Section \ref{sec:question} we will summarize a few  other important results and we finish this paper with open questions and perspectives in this field.

In this presentation we tried to balance brevity of explanations with the amount of prerequisites required by the reader. In particular we did not follow the chronological order of the appearance of results; a historical account can be found in surveys by Thorbergsson \cite{Th,ThSurvey2,ThSurvey3}). Since this is a survey that also includes technical aspects it is suited for researchers who want to begin working in this field.
%Ende

%%%%%%%%%%%%%%%%%%%%%%%%%%%%%%%%%%%

\section{The local structure of SRFs}\label{sec:local}
In this section we present the basic tools that will be used throughout this paper.
The main results are Theorem \ref{lemma-almost-product} and \ref{lemma-metric-in-S}. These are local structure results allowing us to reduce the local study of a SRF to a Euclidean space just as the slice representation in the theory of proper Lie group actions.

Before we present these results let us fix the notation.

We will assume that $\F$ is a SRF on a complete Riemannian manifold $M.$

Let $p\in M$ and $P$ be a relatively compact connected open neighborhood of $p$ in the leaf $L_p$ through $p$. We write $\tub_r(P)$ for the tubular neighborhood of $P$ of radius $r$. There is an $\varepsilon>0$ such that the normal exponential map $\exp^\nu:\{v\in\nu P\mid \|v\|<\varepsilon\}\to \tub_\varepsilon(P)$ from the space of normal vectors of length $<\varepsilon$ to the tubular neighborhood of $P$ of radius $\varepsilon$ is a diffeomorphism. Then for $0<\lambda\leq 1$ the homothety $h_\lambda:\tub_\varepsilon(P)\to\tub_{\lambda\varepsilon}(P)$ is defined by $h_\lambda(\exp^\nu(v))=\exp^\nu(\lambda v)$.

We write $\pi:\tub_\varepsilon(P)\to P$ for the orthogonal projection and for $q\in P$ we call the fiber $S_q:=\pi^{-1}(q)$ the {\em slice} through $q$. For $y\in \tub_\varepsilon(P)$ we call the connected component $P_y$ of $L_y\cap\tub_\varepsilon(P)$ containing $y$ the {\em plaque} of $\F$ through $y$ in the neighborhood.

We also need to recall the so-called \emph{Homothetic Transformation Lemma} of Molino (see Lemma 6.2 \cite{Molino}).
\begin{proposition}[\cite{Molino}]
\label{homothetic-lemma}
The homothetic transformation $h_{\lambda}$ sends plaque to plaque and therefore respects the singular foliation $\F$ in the tubular neighborhood $\tub(P)$.
\end{proposition}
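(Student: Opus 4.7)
The plan is to reduce the proposition to an infinitesimal statement and then appeal to the \emph{holonomy Jacobi field} property of SRFs. Because $\singularF$ acts transitively on each plaque, it suffices to prove the infinitesimal claim that for any $X\in\singularF$ supported in $\tub_\eps(P)$ and any $y\in\tub_\eps(P)$, the vector $\tfrac{d}{dt}\big|_{t=0}h_\lambda(\phi^X_t(y))$ is tangent to the leaf through $h_\lambda(y)$. Applying this pointwise along any orbit $t\mapsto\phi^X_t(y)$ shows that its image under $h_\lambda$ is tangent to $\F$ everywhere and hence contained in a single plaque; and since concatenations of such orbits generate all of $P_y$ by the defining transitivity property of $\singularF$, this yields the lemma (the opposite inclusion follows by running the same argument for $h_{1/\lambda}$).

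To establish the infinitesimal claim, I would write $\phi^X_t(y)=\exp^\nu(v(t))$ with $v(t)\in\nu P$ smooth and foot point $q(t):=\pi(\phi^X_t(y))\in P\subseteq L_p$, and consider the two-parameter horizontal variation $\gamma_t(s):=\exp^\nu(s\,v(t))$ for $s\in[0,1]$. For each $t$ the curve $\gamma_t$ is a horizontal geodesic with $\gamma_t(0)=q(t)\in L_p$ and $\gamma_t(1)=\phi^X_t(y)\in L_y$, and crucially $h_\lambda(\phi^X_t(y))=\gamma_t(\lambda)$. The variation vector $J(s):=\partial_t|_{t=0}\gamma_t(s)$ is a Jacobi field along $\gamma_0$ with boundary values $J(0)=\dot q(0)\in T_{q(0)}L_p$ and $J(1)=X(y)\in T_yL_y$, so the desired infinitesimal claim becomes exactly that $J(\lambda)\in T_{\gamma_0(\lambda)}L_{\gamma_0(\lambda)}$.

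This is where the SRF axiom enters in an essential way and constitutes the main obstacle: one needs the holonomy Jacobi field property, namely that any Jacobi field along a horizontal geodesic, arising from a variation through horizontal geodesics with endpoints varying inside leaves, remains tangent to the leaf at every parameter value. The standard derivation combines the transnormality of $\dist(\cdot,L_p)$ (itself a direct consequence of horizontality of normal geodesics to $P$) with a second-variation or parallel-transport argument along the family $\gamma_t$ to rule out any component of $J(s)$ normal to $L_{\gamma_0(s)}$. Once this property is in hand, specializing at $s=\lambda$ finishes the proof.
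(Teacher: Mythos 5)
The paper does not prove this proposition --- it is quoted from Molino (Lemma~6.2 of \cite{Molino}) --- so I am comparing your argument against the standard proof and against the logical order in which this survey develops the theory. Your reduction is set up correctly: with $\phi^X_t(y)=\exp^\nu(v(t))$ and $\gamma_t(s)=\exp^\nu(s\,v(t))$ one indeed has $\tfrac{d}{dt}h_\lambda(\phi^X_t(y))=J(\lambda)$ for the variation field $J$, and the transitivity of $\singularF$ does reduce the lemma to showing $J(\lambda)\in T_{\gamma_0(\lambda)}L_{\gamma_0(\lambda)}$ for all $\lambda\in(0,1]$.

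The genuine gap is the step you yourself flag as the crux. The ``holonomy Jacobi field property'' you invoke --- that a Jacobi field coming from a variation through horizontal geodesics with endpoints in leaves stays tangent to the leaves at \emph{every} intermediate parameter --- is precisely Bott-equifocality for singular Riemannian foliations, i.e.\ Theorem~\ref{thm-s.r.f.-equifocal}. In this paper (and in \cite{AlexToeben2}, where it was first proved) that theorem sits strictly downstream of the Homothetic Transformation Lemma: the proof of Lemma~\ref{lemma-jacobi-field-itens} invokes Proposition~\ref{homothetic-lemma} explicitly, together with Theorem~\ref{lemma-metric-in-S} and Proposition~\ref{lemma-slice-fundamental}, which themselves depend on the homothety lemma. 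So your argument is circular relative to any known development of the theory. Nor does the claimed ``standard derivation'' from transnormality plus a second-variation argument close the gap: transnormality of $\dist(\cdot,P)$ only shows that the variation stays in the distance cylinders $\partial\tub_{s\|v\|}(P)$, hence that $J(s)$ is tangent to a codimension-one submanifold; tangency to the leaf $L_{\gamma_0(s)}$, whose codimension can be large and whose dimension can drop at isolated values of $s$, is a far stronger statement and is exactly what took the full machinery of Sections~\ref{sec:local}--\ref{sec:Bottequifocality} to establish. (For a \emph{regular} Riemannian foliation the property is classical via basic vector fields, but the entire content of the homothety lemma lies in the singular case, where the Bott connection on $\nu P$ is not available.) Molino's actual proof is a direct foliation-theoretic argument using only the two defining axioms --- in particular the elementary fact that plaques in the tube are equidistant from $P$ because $\grad\dist(\cdot,P)$ is horizontal --- and does not presuppose any statement about Jacobi fields being vertical at intermediate times. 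To repair your write-up you would need to replace the appeal to the holonomy Jacobi field property with such a direct argument, or accept that equifocality must be proved first by other means.
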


The next two theorems are the main results of this section and contain some  improvements of  Molino's results (compare with Theorem 6.1 and Proposition 6.5 of \cite{Molino}). In particular Theorem \ref{lemma-metric-in-S} can be seen as a generalization of the slice representation of proper actions.

%For the sake of completeness, we review the main ideias involved.

\begin{theorem}[\cite{AlexToeben2,Molino}]
\label{lemma-almost-product}
\
Let $g$ be the original metric on $M$ and $q\in M.$  Then there exists a tubular neighborhood $\tub(P_{q})$ and  a new metric $\tilde{g}$ on $\tub(P_{q})$ with the following properties:
\begin{enumerate}
\item[(a)] For each $x\in \tub(P_{q})$ the normal space of the leaf $L_x$ is tangential to the slice $S_{\tilde{q}}$ containing $x$, where $\tilde q\in P_q.$
\item[(b)] The restriction $\pi|_{P_{x}}$ of the orthogonal projection $\pi:\tub(P_{q})\rightarrow P_q$ is a Riemannian submersion.
\item[(c)] $\F| \tub(P_{q})$ is a SRF.
\item[(d)] $\F| S_{\tilde{q}}$ is a SRF for each $\tilde{q}\in P_{q}.$
\item[(e)] The associated transverse metric is not changed, i.e., the distance between the plaques with respect to $g$ is the same as with respect to $\tilde{g}$.
\item[(f)] The geodesics orthogonal to $P_{q}$ are the same for $g$ and $\tilde{g}$.
\end{enumerate}
\end{theorem}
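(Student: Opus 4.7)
The strategy is to use the normal exponential map $\exp^\nu$ to identify $\tub_{\varepsilon}(P_q)$ with a disk subbundle of the normal bundle $\nu P_q \to P_q$, and then transport to $\tub_{\varepsilon}(P_q)$ a natural ``connection metric'' on $\nu P_q$ built from the restricted metric on $P_q$, the Euclidean fiber metric, and the normal connection. Explicitly, for $v \in \nu_{\tilde q} P_q$, split $T_v(\nu P_q) = \mathcal H_v \oplus \mathcal V_v$, where $\mathcal V_v \cong \nu_{\tilde q} P_q$ is the fiber and $\mathcal H_v$ is the horizontal distribution of the normal connection; declare the splitting to be $\tilde g$-orthogonal, let $\tilde g|_{\mathcal V_v}$ be the induced Euclidean inner product on the fiber, and let $\tilde g|_{\mathcal H_v}$ be the pullback via $d\pi$ of $g|_{TP_q}$. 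In particular $\tilde g$ and $g$ agree along $P_q$.

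Three of the six items then follow almost directly from this construction. For (e), the $\tilde g$-distance between two fibers equals the $g$-distance between their base points in $P_q$, which coincides with the $g$-distance between the corresponding plaques, so the transverse metric is preserved. For (f), the slices are totally geodesic flat Euclidean subspaces of $(\tub_\eps(P_q),\tilde g)$, so their radial rays are $\tilde g$-geodesics; by the definition of $\exp^\nu$ these same rays are $g$-geodesics. For (b), $\pi$ is a Riemannian submersion by construction, so $\pi|_{P_x}$ becomes a Riemannian submersion as soon as (a) provides the inclusion $T_x P_x \supseteq \mathcal H_x$.

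The main obstacle is (a): showing that at every $x \in \tub_\eps(P_q)$ the horizontal distribution $\mathcal H_x$ is tangent to the plaque $P_x$. My approach is to prove that if $x = \exp^\nu(v_0)$ with $v_0 \in \nu_{\tilde q} P_q$ and $v(t)$ is the $\nu$-parallel translate of $v_0$ along a curve $\bar\gamma$ in $P_q$ with $\bar\gamma(0) = \tilde q$, then $\exp^\nu(v(t))$ remains inside the single plaque $P_x$. Under $\exp^\nu$, the velocity of $t \mapsto \exp^\nu(v(t))$ at $t=0$ equals the $\tilde g$-horizontal lift at $x$ of $\bar\gamma'(0)$, so this claim yields (a). To establish the claim I would combine the transnormal axiom of the SRF (the map $(s,t) \mapsto \exp^\nu(s v(t))$ is a variation through horizontal $g$-geodesics, each perpendicular to every leaf it meets) with Molino's Homothetic Transformation Lemma (Proposition \ref{homothetic-lemma}): the homothetic rescaling $h_\lambda$ sends the candidate endpoint curve to a curve in the plaque $P_{h_\lambda(x)}$, and letting $\lambda \to 0$ controls it near $P_q$, forcing $t \mapsto \exp^\nu(v(t))$ to stay in a single leaf for all small $t$.

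Once (a) is in place, item (c) reduces to checking that $\F$ remains transnormal for $\tilde g$: the module $\singularF$ of tangent vector fields is unchanged, and a $\tilde g$-geodesic perpendicular to a leaf decomposes under the submersion $\pi$ into a horizontal piece projecting onto $P_q$ and a vertical radial piece in a slice, whose perpendicularity to every leaf it meets follows from (f) together with the original SRF property of $g$. Item (d) is then a direct consequence: the slice $S_{\tilde q}$ inherits a flat Euclidean metric from $\tilde g$, and geodesics in it perpendicular to the leaves of $\F|_{S_{\tilde q}}$ are radial rays through $\tilde q$, which by (f) coincide with the $g$-geodesics perpendicular to these leaves at $\tilde q$, so the SRF property on the slice is inherited from the ambient SRF.
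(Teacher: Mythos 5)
There is a genuine gap, and it sits at the heart of your construction: the claim in (a) that the horizontal distribution $\mathcal{H}$ of the \emph{metric normal connection} of $P_q$ is tangent to the plaques of $\F$ --- equivalently, that $t\mapsto\exp^\nu(v(t))$ stays in a single plaque when $v(t)$ is the $\nu$-parallel translate of $v_0$ --- is false for general SRFs. Test it on the Hopf fibration of $S^3\subset\mathbb{C}^2$ with $P_q$ a piece of the fiber $\theta\mapsto(e^{i\theta},0)$: the normal parallel fields along this geodesic are the constant fields $V_w(\theta)=(0,w)$, so $\exp^\nu\big(tV_w(\theta)\big)=(\cos t\,e^{i\theta},\,\sin t\,w)$, whose $\theta$-derivative $(i\cos t,0)$ is tangent to the distance torus $\{|z_1|=\cos t\}$ but has a nonzero component orthogonal to the Hopf fiber through $(\cos t,\sin t\,w)$, whose tangent direction is $(i\cos t,\,i\sin t\,w)$. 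Hence $\mathcal{H}_x\not\subseteq T_xP_x$ already for a regular Riemannian foliation. What is true is the analogous statement for the \emph{Bott} connection (Theorem \ref{thm-s.r.f.-equifocal}), which differs from the normal connection precisely when the horizontal distribution is non-integrable; neither the transnormal axiom nor the Homothetic Transformation Lemma can rescue your claim, since both are consistent with its failure. The paper sidesteps the issue entirely: it does not extract $\mathcal{H}$ from the metric at all, but generates an auxiliary regular foliation $\F^2$ by flowing finitely many vector fields $X_1,\dots,X_r\in\singularF$ spanning $T_qP_q$; its plaques satisfy $P^2_y\subset P_y$ by construction, so tangency to the leaves is automatic, and $\tilde g$ is then defined by declaring $T\F^2$ and the slices orthogonal and pulling back $g$ along $\pi|_{P^2_y}$ and along the $g$-orthogonal projection onto $\nu_pL^2_p$. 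Item (a) is then a definition, not a theorem.

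Even granting (a), your arguments for (e) and (c) need repair. For (e) you compare distances between \emph{fibers of $\pi$} (slices), which all meet every nearby plaque; what must be preserved is the distance between \emph{plaques of $\F$}, and nothing in the fibration picture controls that. For (c), a $\tilde g$-geodesic perpendicular to a leaf does not split into a horizontal piece over $P_q$ plus a radial slice piece. The paper handles both items with one tool: the function $f(y)=d(y,P_x)^2$ is transnormal for $g$ and remains transnormal for $\tilde g$ with the same function $b$, so by Wang's theorem its regular level sets are distance cylinders about $P_x$ for $\tilde g$ as well (giving (c), hence (d) via (a)), while the formula $d(P_x,f^{-1}(c))=\int_c^t ds/\sqrt{b(s)}$, valid for both metrics, gives (e), and the fact that gradient lines of a transnormal function are reparametrized geodesics gives (f).
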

\begin{proof}

Let $X_1,\ldots, X_r \in  \singularF$ (i.e. vector fields that are always tangential to the leaves)
so that $\{X_i(q)\}_{i=1,\ldots ,r}$ is a linear basis of $T_{q}P_{q}$.  Let $\varphi_{t_1}^1,\ldots, \varphi_{t_r}^r$ denote the associated one parameter groups and define $\varphi(t_1,\ldots,t_r,y):= \varphi_{t_1}^1\circ\cdots\circ\varphi_{t_r}^r(y)$ where $y\in S_{q}$ and $(t_1,\ldots,t_r)$ belongs to a neighborhood $U$  of $0\in\mathbb{R}^r.$  Then, reducing $U$ and $\tub(P_{q})$ if necessary, one can guarantee the existence of   a regular foliation $\F^{2}$ with plaques   $P^{2}_{y}= \varphi(U,y).$ We note that the plaques $P^{2}_{z}\subset P_{z}$ and each plaque $P^{2}$ intersects each slice transversally and at exactly one point.  Using the fact that  $\pi|_{P^{2}_{y}}: P^{2}_{y}\rightarrow P_{q}$ is a diffeomorphism, we can define a metric on each plaque $P^{2}_{y}$ as
$\tilde{g}^{2}:=(\pi|_{P^{2}_{y}})^{*}g.$

Now we want to define a metric $\tilde{g}^1$ on each slice $S\in \{S_{\tilde{q}}\}_{\tilde{q}\in P_{q}} .$ Set $D_{p}:=\nu_{p} L^{2}_{p}$ and define $\Pi:T_{p}M\rightarrow D_{p}$ as the orthogonal projection with respect to $g$. The fact that each plaque $P^{2}$ intersects each slice transversally and at a single point implies that $\Pi|_{T_{p}S}:T_{p}S\rightarrow D_{p}$ is an isomorphism. Finally we define $\tilde{g}^{1}:= (\Pi|_{T_{p}S})^{*}g$ and $\tilde{g}:=\tilde{g}^{1}+\tilde{g}^{2}$, meaning that $\F^2$ and the slices meet orthogonally. Items (a) and (b) follow directly from the definition of $\tilde{g}.$

Before we continue the sketch of proof, let us first recall the definition of a transnormal function. A smooth function $f:M\rightarrow \mathbb{R}$ is called a \emph{transnormal function} with respect to the metric $g$ if  there exists a $C^{2}(f(M))$ function  $b$  such that $g(\grad f,\grad f)=b\circ f$.
According to Wang \cite{Wang} there are at most two critical level sets of the transnormal function $f$ and each regular level set of $f$ is a distance cylinder over them.

For any $x\in \tub(P_q)$ let $f:\tub(P_{x})\rightarrow \mathbb{R}$ be a smooth transnormal function with respect to the metric $g$ so that each regular level set $f^{-1}(c)$ is a cylinder $C$ with axis $P_x$,  e.g. $f(y)=d(y,P_x)^2$. It is possible to prove that $f$ remains a transnormal function with respect to the new metric $\tilde{g}$ (with the same function $b$). Using a local version of Q-M Wang's theorem, we conclude that each regular level set $C$ of $f$ is a distance cylinder around $P_{x}$ with respect to the metric $\tilde{g}$. Therefore plaques of $\F$ are locally equidistant to each other, i.e., we have proved (c).
Item (d) follows then from (a).
To prove (e) we have to prove that the distance between the cylinder $C$ and the plaque $P_{x}$ is the same for both metrics.
Let $f$ be the transnormal function (with respect to $g$) defined above.
According to Q-M Wang \cite{Wang}  for $t=f(P_x)$ and a regular value $c$  we have
$d(P_{x},f^{-1}(c))=\int_{c}^{t}\frac{ds}{\sqrt{b(s)}}.$ Since $f$ is also a transnormal function with respect to $\tilde{g}$, we conclude that $d(P_{x},C)=\tilde{d}(P_{x},C),$
for  $C=f^{-1}(c).$

Finally (f) can be proved using the fact that the integral curves of the gradient of a transnormal function are geodesic segments up to reparametrization;  see e.g. \cite{Wang}.
\end{proof}

\begin{theorem}[\cite{AlexToeben2,Molino}]
\label{lemma-metric-in-S}
There exists  a new metric $g_0$  on $\tub(P_q)$ so that the following holds:
\begin{enumerate}
\item[(a)] Consider the tangent space $T_{\tilde{q}}S_{\tilde{q}}$ with the inner product $g_{\tilde q}$ and $S_{\tilde{q}}$ with the metric $g_0$. Then  $\exp_{\tilde{q}}:T_{\tilde{q}}S_{\tilde{q}}\rightarrow S_{\tilde{q}}$ is an isometry.
\item[(b)] For this new metric $g_0$ we have that $\F|S_{\tilde{q}}$ and $\F|\tub(P_q)$ are also SRFs.
\item[(c)]  For each $x\in \tub(P_{q})$ the normal space of the leaf $L_x$ is tangential to the slice $S_{\tilde{q}}$ which contains $x$, where $\tilde q\in P_q.$
\end{enumerate}
\end{theorem}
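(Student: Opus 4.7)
The plan is to further modify the metric $\tilde g$ from Theorem \ref{lemma-almost-product}, leaving the transverse directions alone but flattening each slice via the exponential map. Write $\tilde g = \tilde g^1 + \tilde g^2$ where $\tilde g^1$ lives on the slices and $\tilde g^2$ on the plaques of the auxiliary regular foliation $\F^2$. For each $\tilde q \in P_q$, define a new metric on $S_{\tilde q}$ by $g_0^1 := (\exp_{\tilde q}^{-1})^\ast g_{\tilde q}$, where $g_{\tilde q}$ denotes the original flat inner product on $T_{\tilde q} S_{\tilde q}$; here $\exp_{\tilde q}$ is the exponential map of $g$ restricted to normal directions, which by part (f) of Theorem \ref{lemma-almost-product} coincides with the one of $\tilde g$. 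Then set $g_0 := g_0^1 + \tilde g^2$. Property (a) holds by construction, and property (c) is inherited from part (a) of Theorem \ref{lemma-almost-product}, since the decomposition $T\tub(P_q) = TS \oplus T\F^2$ and their orthogonality are unaffected by the modification within slices.

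The heart of the proof is (b). I would first show that $\F| S_{\tilde q}$ is a SRF with respect to $g_0$ by transporting the question, via the isometry $\exp_{\tilde q}$, to the linear space $T_{\tilde q} S_{\tilde q}$ equipped with the flat metric $g_{\tilde q}$. Let $\F^0 := \exp_{\tilde q}^{-1}(\F | S_{\tilde q})$ be the pulled-back partition, the \emph{infinitesimal foliation} at $\tilde q$. Part (d) of Theorem \ref{lemma-almost-product} guarantees that $\F | S_{\tilde q}$ is a singular foliation, and this structure pulls back to $\F^0$. One must then verify that every Euclidean straight line in $T_{\tilde q} S_{\tilde q}$ perpendicular at one point to a leaf of $\F^0$ remains perpendicular to every leaf of $\F^0$ it meets.

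The main tool for this is the Homothetic Transformation Lemma (Proposition \ref{homothetic-lemma}): in the chart $\exp_{\tilde q}$ the homothety $h_\lambda$ is exactly scalar multiplication $v \mapsto \lambda v$, so $\F^0$ is invariant under all dilations from the origin. Combined with the fact that radial segments in $S_{\tilde q}$ (which correspond to geodesics of $g$ normal to $L_{\tilde q}$) are automatically horizontal for $\F | S_{\tilde q}$ with respect to the original metric, one deduces that every ray from $0$ is horizontal for $\F^0$. For an arbitrary straight line, horizontality at one intersection with a leaf is then upgraded to horizontality at every intersection by using the dilation invariance of $\F^0$ together with the SRF property of $(\F | S_{\tilde q}, \tilde g)$ near $\tilde q$ and a blow-up/limit argument in which the base point is pushed toward $0$ by $h_\lambda$. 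Once $\F^0$ is known to be a SRF on $(T_{\tilde q} S_{\tilde q}, g_{\tilde q})$, the isometry $\exp_{\tilde q}$ transfers this back to $(S_{\tilde q}, g_0)$. Finally, the SRF property of $\F$ on all of $\tub(P_q)$ with respect to $g_0$ follows exactly as in the proof of Theorem \ref{lemma-almost-product}, via the transnormal function $f = d_{g_0}(\cdot, P_x)^2$, using the product structure $g_0 = g_0^1 + \tilde g^2$.

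The main obstacle I expect is the passage from horizontal rays to horizontal arbitrary straight lines for $\F^0$: the Homothetic Transformation Lemma immediately delivers the radial case, but the general case requires combining dilation invariance with a delicate limit argument and the already-established SRF property of $\F | S_{\tilde q}$ under $\tilde g$. This is the step where the interplay between the two theorems is essential and where care must be taken to avoid circularity.
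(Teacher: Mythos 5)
Your metric $g_0$ is the right one --- the pushforward of the flat inner product under $\exp_{\tilde q}$ in the slice directions, kept together with $\tilde g^2$, is exactly the limit as $\lambda\to 0$ of the family $g_\lambda=\frac{1}{\lambda^2}h_\lambda^*\tilde g^1+\tilde g^2$ used in the paper --- and your treatment of (a) and (c) is fine. But the step you yourself flag as ``the main obstacle'' is precisely the content of (b), and the route you sketch does not go through as stated. If you try to upgrade perpendicularity of a straight line from one intersection point to all others by a limit argument (approximating with $g_\lambda$-geodesics, or pushing the base point toward $0$ with $h_\lambda$), you run into the fact that perpendicularity is a condition on the tangent spaces of the leaves, and these do \emph{not} vary continuously across the singular strata: leaf dimension is only lower semicontinuous, so perpendicularity statements along a converging sequence need not converge to the perpendicularity statement you want at a singular point of the limiting line. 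Dilation invariance of $\F^0$ alone does not fix this, since a dilation carries your line to a parallel line through a different point rather than acting along the fixed line.

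The device that makes the limit work is to replace perpendicularity by the equivalent but metrically robust condition that plaques are locally equidistant. Concretely: (i) for each $\lambda>0$, $\F$ is a SRF with respect to $g_\lambda$, because $h_\lambda$ preserves the foliation (Proposition \ref{homothetic-lemma}) and a constant conformal rescaling does not affect the SRF property (alternatively, argue with transnormal functions as in Theorem \ref{lemma-almost-product}); (ii) since $g_\lambda\to g_0$ smoothly, the length functionals converge uniformly, so $d_\lambda(x,P)\to d_0(x,P)$ for every plaque $P$; (iii) hence for $x,y$ in a common plaque, $0=d_\lambda(x,P)-d_\lambda(y,P)\to d_0(x,P)-d_0(y,P)$, i.e.\ the plaques remain locally equidistant for $g_0$, which is what establishes that $\F|S_{\tilde q}$ and $\F|\tub(P_q)$ are SRFs for $g_0$. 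With this in hand, your reduction to the flat space $(T_{\tilde q}S_{\tilde q},g_{\tilde q})$ becomes a consequence of (a) rather than an input to (b). One further small point: defining $g_0^1$ slice-by-slice as $(\exp_{\tilde q}^{-1})^*g_{\tilde q}$ requires checking smoothness of the resulting tensor in the direction of $P_q$; this is automatic once $g_0$ is realized as the smooth limit of the family $g_\lambda$.
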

\begin{remark}
\label{rem-prop-flat-metric-in-S}
A geodesic of the original metric starting orthogonally to $P_q$ remains an orthogonal geodesic with respect to the metrics $\tilde g$ and $g_0$
\end{remark}
\begin{proof}
Let $\tilde{g},\tilde{g}^{1},\tilde{g}^{2}$ be the metrics  defined in Theorem \ref{lemma-almost-product}. Let  $h_{\lambda}$ denote  the homothetic transformation with respect to $P_q$.
Define $g_{\lambda}=\frac{1}{\lambda^2}h^{*}_{\lambda} \tilde{g}^{1}+\tilde{g}^{2}.$ Note that  the metric $g_{\lambda}$ smoothly converges to  a metric $g_0$  for $\lambda\to 0$.
This metric $g_0$ restricted to $S_{\tilde{q}}$ is
the flat metric on $\nu P_{\tilde{q}}$ induced from $\tilde{g}^1$, where $\tilde{q}\in P_{q}.$
This implies that $\mathrm{L}_{\lambda}$ tends uniformly to $\mathrm{L}_{0}$, where $\mathrm{L}_{\lambda}$ is the length function for $g_\lambda$. It follows then that
\begin{equation}
\label{lemma-metric-in-S-Eq1}
\lim_{\lambda\rightarrow 0} d_{\lambda}(x,P)= d_{0}(x,P)
\end{equation}
where  $P$ is a plaque.
One can prove  that $\F$ is a SRF with respect to $g_{\lambda}$ using transnormal functions.

Finally let $x$ and $y$ be points which belong to the same plaque. Using equation \eqref{lemma-metric-in-S-Eq1} and the fact that $\F$ is a SRF with respect to $g_{\lambda}$ we conclude that

\begin{eqnarray*}
0&=&\lim_{\lambda\rightarrow 0} (\, d_{\lambda}(x,P)-d_{\lambda}(y,P)\,)\\
          &=& d_{0}(x,P)-d_{0}(y,P)
\end{eqnarray*}

The above equation implies  that the plaques are locally equidistant and hence that the singular foliation $\F$ is Riemannian.
\end{proof}

%%%%%%%%%%%%%%%%%%%%
We will frequently use the above result so that is convenient to give a name to the induced foliation on the slice with the metric $g_0$.

\begin{definition}
\label{definition-infinitesimalfoliation}
Let $q \in M$ be a singular point and $S_q$ a slice at $q$. Following Theorem \ref{lemma-metric-in-S}
one can define a singular Riemannian foliation $\hat{\F}(q)$ on a $(T_qS_q,g_q)$ by
setting $\hat{\F}(q) = \exp_q^{\ast}\big(\F|_{S_q}\big)$ and extending to the whole space by homothetic transformations with respect to the origin. It is called
the \emph{infinitesimal foliation} or the \emph{linearization of $\F$} at $q$.
\end{definition}

The next proposition is a consequence of Theorem \ref{lemma-metric-in-S}, Remark \ref{rem-prop-flat-metric-in-S} and arguments in Euclidean space.

\begin{proposition}[\cite{AlexToeben2}]
\label{lemma-slice-fundamental}
Let $S_{q}$ be a slice at $q$ and
  $\varphi:S_{q}\rightarrow S_{q}$ be the geodesic symmetry at $q,$ i.e., $\varphi=\exp_{q}\circ(-\id)\circ\exp_{q}^{-1}$. Then
the map $\varphi$ respects the foliation $\F|S_{q}$, or in other words, the foliation $\F|S_{q}$ is invariant under the involution $\varphi$.
\end{proposition}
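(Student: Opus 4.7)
The plan is to use Theorem \ref{lemma-metric-in-S} and Remark \ref{rem-prop-flat-metric-in-S} to transport the problem to Euclidean space, then finish with a short computation using the homothety structure of the infinitesimal foliation.

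First, endow a tubular neighborhood of $P_q$ with the metric $g_0$ supplied by Theorem \ref{lemma-metric-in-S}. Then $\exp_q\colon(T_qS_q,g_q)\to(S_q,g_0)$ is an isometry and $\F|S_q$ remains an SRF; by Remark \ref{rem-prop-flat-metric-in-S} the geodesics from $q$ orthogonal to $P_q$ are unchanged when passing from $g$ to $g_0$, and under $\exp_q$ the symmetry $\varphi$ becomes the antipodal map $-\id$ on $V:=T_qS_q$. Since $S_q$ meets $L_q$ only at $q$, the leaf of $\hat{\F}(q):=\exp_q^{\ast}(\F|S_q)$ through the origin is $\{0\}$. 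So the statement reduces to showing that any SRF on a Euclidean vector space $V$ admitting $\{0\}$ as a leaf is invariant under $-\id$.

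In $V$, the SRF property applied to the leaf $\{0\}$ shows that every straight line through the origin is a horizontal geodesic, and the Homothetic Transformation Lemma (Proposition \ref{homothetic-lemma}) gives invariance of $\hat{\F}(q)$ under the positive dilations $h_\lambda$, $\lambda>0$. For $X\in\mathcal{X}_{\hat{\F}(q)}$ one has $X(0)=0$ and $h_\lambda^{\ast}X(v)=\lambda^{-1}X(\lambda v)\in\mathcal{X}_{\hat{\F}(q)}$ for every $\lambda>0$. Writing the Taylor expansion $X=\sum_{k\ge 1}X_k$ with $X_k$ a homogeneous polynomial vector field of degree $k$, this yields $h_\lambda^{\ast}X=\sum_{k\ge 1}\lambda^{k-1}X_k$; iterated $\lambda\to 0^+$ limits, together with closure of $\mathcal{X}_{\hat{\F}(q)}$ under $C^\infty$-limits, extract each $X_k$ as an element of $\mathcal{X}_{\hat{\F}(q)}$. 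The parity computation
\[
(-\id)^{\ast}X_k(v)=-X_k(-v)=(-1)^{k+1}X_k(v)
\]
then shows $(-\id)^{\ast}X_k=\pm X_k\in\mathcal{X}_{\hat{\F}(q)}$, so the singular tangent distribution of $\hat{\F}(q)$, generated pointwise by values of homogeneous polynomial elements of $\mathcal{X}_{\hat{\F}(q)}$, is preserved by $-\id$; hence so is the foliation $\hat{\F}(q)$ itself, and transporting back via $\exp_q$ yields the claim.

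The main obstacle is the extraction step: one must verify that the homogeneous Taylor components $X_k$ are bona fide elements of $\mathcal{X}_{\hat{\F}(q)}$ and that they generate its tangent distribution pointwise throughout $V$, not merely near the origin. The first follows from closure of the module under $C^\infty$-limits (the pointwise condition $X(v)\in T_vL_v$ being closed), while the second is guaranteed by the homothety invariance of $\hat{\F}(q)$, which recovers the global distribution along any ray from its infinitesimal behaviour at the origin.
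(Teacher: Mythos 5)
Your reduction to the Euclidean picture via Theorem \ref{lemma-metric-in-S} and Remark \ref{rem-prop-flat-metric-in-S} is exactly the route the paper takes (the survey itself only says the proposition follows from these plus ``arguments in Euclidean space'', deferring to \cite{AlexToeben2}), and two of your Euclidean steps are sound: the iterated rescaling limits do show that every homogeneous Taylor component $X_k$ of a field $X\in\mathcal{X}_{\hat{\F}(q)}$ again lies in $\mathcal{X}_{\hat{\F}(q)}$, and the parity computation $(-\id)_{*}X_k=(-1)^{k+1}X_k$ is correct. The genuine gap is the spanning claim you flag yourself: that $T_v\hat{L}_v$ is spanned, at \emph{every} $v$, by values of homogeneous polynomial elements of the module. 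Homothety invariance does not give this. It tells you that the tangent distribution is constant along rays and that Taylor components of tangent fields are again tangent fields; it does not rule out that some direction in $T_v\hat{L}_v$ is attained only by fields that are flat at the origin, all of whose homogeneous components vanish. Your phrase that homothety invariance ``recovers the global distribution along any ray from its infinitesimal behaviour at the origin'' conflates the germ of the foliation near $0$ (which does determine everything, by rescaling) with the Taylor jets at $0$ of the tangent vector fields (which do not control the distribution at $v\neq 0$).

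That this is not a formality is visible already in degree one: for an inhomogeneous isoparametric foliation (e.g.\ of Ferus--Karcher--M\"unzner type, coned to Euclidean space) the linear elements $X_1=DX(0)$ span only the orbit distribution of the linear symmetry group, which is strictly smaller than the tangent distribution of the leaves; so the higher components are genuinely needed, and their pointwise sufficiency is a serious assertion. In full generality, the statement that (homogeneous) polynomial tangent fields span the distribution at every point is essentially the algebraicity theorem of Lytchak and Radeschi for singular Riemannian foliations on spheres --- a much later and much deeper result than the proposition you are proving, and certainly not a consequence of Proposition \ref{homothetic-lemma}. To close the argument one must instead show directly that $T_{-v}\hat{L}_{-v}=T_{v}\hat{L}_{v}$ for all $v$ (equivalently $(-\id)_{*}\mathcal{X}_{\hat{\F}(q)}\subseteq\mathcal{X}_{\hat{\F}(q)}$) using the metric structure --- for instance that $\{0\}$ being a leaf forces every leaf into a round sphere about the origin and that straight lines through the origin are horizontal geodesics along which the foliation can be transported --- rather than through Taylor expansions of the module.
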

Note that $\varphi$ is equal to the homothety $h_{-1}$. In other words, the foliation is invariant under homotheties $h_\lambda$ for all $-1\leq\lambda\leq 1$.

From the above result we can infer the next  one.

\begin{corollary}[\cite{AlexToeben2}]
\label{singular-points-isolated}
Let $\gamma$ be a geodesic orthogonal to a regular leaf of a SRF. Then the singular points are isolated on $\gamma.$
\end{corollary}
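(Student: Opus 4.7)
The plan is to localize at a singular point $q=\gamma(t_0)$ of $\gamma$ and exploit the scale invariance of the infinitesimal foliation. Theorem \ref{lemma-metric-in-S} gives a metric $g_0$ on a neighborhood of $P_q$ under which $\exp_q\colon(T_qS_q,g_q)\to(S_q,g_0)$ is an isometry onto the flat slice, with $\F|S_q$ carried to the infinitesimal foliation $\hat\F(q)$ on $\nu_qL_q$. By Remark \ref{rem-prop-flat-metric-in-S}, the horizontal geodesic $\gamma$, which strikes $P_q$ perpendicularly at $q$, remains a $g_0$-geodesic orthogonal to $P_q$; in the flat model it is therefore a straight line $s\mapsto sv$ through the origin for some unit $v\in\nu_qL_q$.

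Next I would show that the leaf dimension of $\F$ is constant along the punctured segment $\gamma(t_0+s)$, $0<|s|<\varepsilon$. The Homothetic Transformation Lemma (Proposition \ref{homothetic-lemma}) preserves $\hat\F(q)$ under $h_\lambda$ for every $\lambda\in(0,1]$, and Proposition \ref{lemma-slice-fundamental} adds $h_{-1}$, so for any two nonzero parameters $s_1,s_2$ of small absolute value there is a composition of homotheties mapping $s_1v$ to $s_2v$ while respecting the leaf partition. In particular the leaves through $s_1v$ and $s_2v$ have equal dimension; combining with the local product decomposition in Theorem \ref{lemma-almost-product}, under which $\dim L_y=\dim L_q+\dim(L_y\cap S_q)$ for $y$ near $q$, this yields a common value $d$ of $\dim L_{\gamma(t_0+s)}$ for all $0<|s|<\varepsilon$.

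To conclude I would split on $d$: if $d$ equals the regular leaf dimension then $q$ is immediately isolated, so assume $d<d_{\mathrm{reg}}$ and derive a contradiction. In that case the whole open interval $(t_0-\varepsilon,t_0+\varepsilon)$ lies in the closed set $U=\{t:\gamma(t)\text{ is singular}\}$. Let $(a,b)\ni t_0$ be the maximal open subinterval of $U$ containing $t_0$; by the hypothesis that $\gamma$ meets a regular leaf, $U\neq\RR$, so one endpoint, say $b$, is finite. Then $\gamma(b)\in U$ (as $U$ is closed) and is non-isolated in $U$ (the interval $(b-\delta,b)$ lies in $U$), so the previous paragraph applied at $\gamma(b)$ produces an open interval around $b$ contained in $U$, contradicting the maximality of $(a,b)$. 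The step I expect to be most delicate is the use of both Proposition \ref{homothetic-lemma} and Proposition \ref{lemma-slice-fundamental} to obtain homotheties of \emph{both} signs acting on the line $s\mapsto sv$; without the $h_{-1}$ symmetry one cannot synchronize the leaf dimensions on the two sides of $q$, which is precisely what allows the maximality argument in the final step to close.
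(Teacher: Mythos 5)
Your proof is correct and follows essentially the route the paper intends: Proposition \ref{homothetic-lemma} together with the $h_{-1}$-invariance of Proposition \ref{lemma-slice-fundamental} give constant leaf dimension on a punctured interval around each point of the horizontal geodesic, and a standard connectedness argument (which the paper leaves implicit) then forces the singular set on $\gamma$ to be discrete, using that $\gamma$ meets a regular leaf.
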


We conclude this section by discussing some  results about a stratification of $M$.

The union of leaves with the same dimension is a disjoint union of embedded submanifolds called \emph{stratum}. The collection of all strata is a stratfication in the usual sense, see e.g. \cite{Duistermaat}. A {\em minimal stratum} is the stratum of leaves of least dimension. It is easy to see that it is a disjoint union of  isolated closed submanifolds, see \cite{Molino}. We note that each geodesic of the ambient manifold that starts orthogonally to a leaf in the stratum but tangentially to the stratum remains in this stratum.
Therefore the induced foliation on each stratum is a Riemannian foliation.
The observation that every geodesic perpendicular to a stratum is perpendicular to the leaves allows us to adapt the argument of Proposition \ref{homothetic-lemma} and conclude that the foliation is locally invariant by the homothetic transformations with respect to the stratum.

%\begin{proposition}[\cite{Molino}]
%\label{homothetic-lemma-stratum}
%Let $\Sigma$ be a singular stratum. Consider a tubular neighborhood $\tub(U)$ of a relatively compact open set $U$ of $\Sigma$. Then the plaques of $\F\cap %\tub(U)$ are contained in the  the cylinders of axis $U$. In addition, the foliation is locally invariant by the homothetic transformations with respect to the %stratum.
%\end{proposition}

%%%%%%%%%%%%%%%%%%%%%

A natural question is the following. Assume that $\F$ is a singular foliation on $M$ and there there exists a metric $\tilde{g}$ so that the restriction of $\F$ to each stratum is a Riemannian foliation. Is $\F$ a SRF with respect to $\tilde{g}$?  This question is still open. A partial answer is the following proposition.

\begin{proposition}[\cite{Alex6,Molino}]
\label{prop-metric-stratum}
Let $\F$ be a SRF with respect to a metric $g$. Let $\tilde{g}$ be another metric with the following property: $(\mathcal M,\F|_{\mathcal M})$ is a Riemannian foliation with respect to $\tilde{g}$ for each stratum $\mathcal M$. Then $\F$ is a SRF with respect to $\tilde{g}.$
\end{proposition}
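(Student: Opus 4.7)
Plan: I would verify directly the defining property of an SRF for $\tilde g$: every $\tilde g$-geodesic $\tilde\gamma$ that is $\tilde g$-perpendicular to a leaf at one time stays $\tilde g$-perpendicular at every time. For $X\in\mathcal X_\F$, set
$$\varphi_X(t):=\tilde g\bigl(\tilde\gamma'(t),X(\tilde\gamma(t))\bigr),$$
and let $A:=\{t : \varphi_X(t)=0 \text{ for all } X\in\mathcal X_\F\}$. Since $\mathcal X_\F$ spans $T_qL_q$ pointwise by the singular foliation condition, $A$ is precisely the set of $\tilde g$-perpendicularity times. Continuity makes $A$ closed, and it is nonempty by hypothesis, so it suffices to prove $A$ is open.

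Fix $t_1\in A$ and let $\mathcal M$ denote the stratum of $\tilde\gamma(t_1)$. If $\mathcal M=\mathcal M_{\mathrm{reg}}$ is the open regular stratum, then $\tilde\gamma$ stays inside $\mathcal M_{\mathrm{reg}}$ on a neighborhood of $t_1$ and is a $\tilde g|_{\mathcal M_{\mathrm{reg}}}$-geodesic there, so the Riemannian foliation hypothesis propagates $\varphi_X\equiv0$. If $\mathcal M$ is singular but $\tilde\gamma$ stays in $\mathcal M$ on an interval around $t_1$, the key observation is that the $\tilde g$-acceleration of $\tilde\gamma$ in $M$ vanishes, hence so does its $\tilde g|_{\mathcal M}$-tangential projection, so $\tilde\gamma$ is a $\tilde g|_{\mathcal M}$-geodesic, and hypothesis~(b) applied to $\mathcal M$ again propagates perpendicularity.

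The delicate case is when $\tilde\gamma$ leaves the singular stratum $\mathcal M$ immediately after $t_1$, entering a higher-dimensional stratum $\mathcal M'$. Here the Grassmannian limit of $T_{\tilde\gamma(t)}L_{\tilde\gamma(t)}$ as $t\to t_1^+$ can be strictly larger than $T_{\tilde\gamma(t_1)}L_{\tilde\gamma(t_1)}$, so continuity of $\varphi_X$ alone does not supply a perpendicularity initial condition on the $\mathcal M'$-side. I would handle this by localizing at $\tilde\gamma(t_1)$: pass to a $g$-slice and linearize via Theorem~\ref{lemma-metric-in-S}, working with the infinitesimal foliation $\hat{\mathcal F}(\tilde\gamma(t_1))$. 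The $g$-invariance of $\F$ under the homothetic transformations centered on $\mathcal M$, remarked upon just before the proposition, combined with the $\tilde g$-Riemannian foliation hypothesis on the adjacent stratum $\mathcal M'$, supplies the missing perpendicularity data across $t_1$ and gives the starting condition to invoke the regular case on $\mathcal M'$.

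The principal obstacle is precisely this stratum transition: continuity of $\varphi_X$ is insufficient at singular times where the leaf dimension jumps, and one must exploit the rigid $g$-SRF geometry (through the infinitesimal foliation and stratum-centered homotheties, both of which are available only because $\F$ is a $g$-SRF) to bridge $\tilde g$-perpendicularity from $\mathcal M$ into $\mathcal M'$. Once this bridging lemma is in place, the open-closed dichotomy closes and $A$ equals the whole domain of $\tilde\gamma$.
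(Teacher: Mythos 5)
Your open--closed scheme is the right skeleton and the two ``easy'' cases (the regular stratum, and a geodesic that remains inside a single singular stratum) are handled correctly, but the decisive step --- propagating $\tilde g$-perpendicularity across a time $t_1$ where the leaf dimension jumps --- is exactly where the content of the proposition lives, and your proposal does not actually carry it out. Worse, the mechanism you name is in tension with what must be proved: the slice, the linearization of Theorem \ref{lemma-metric-in-S} and the homothetic transformations are all constructions adapted to the original metric $g$, whereas the object to be controlled is a $\tilde g$-geodesic and $\tilde g$-orthogonality. A $\tilde g$-geodesic issuing $\tilde g$-perpendicularly from $L_q$ need not lie in the $g$-slice $S_q$, and the $\tilde g$-normal spaces of the nearby regular leaves need not be tangent to the $g$-slices, so it is not explained how $g$-linearization and $g$-homotheties produce the $\tilde g$-perpendicularity initial condition on the $\mathcal M'$ side. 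As written, the ``bridging lemma'' is an assertion, not an argument.

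The ingredient that actually does the bridging is isolated in Remark \ref{rem-prop-metric-stratum}: besides the Riemannian-foliation hypothesis on each stratum and the convergence of plaques, one needs property (d) --- along a curve $\beta$ leaving the stratum there is a distribution $t\mapsto V_t$ of constant rank with $T_{\beta(t)}P_{\beta(t)}\subset V_t$ and with a continuous limit $V_0$ at the stratum. This distribution is furnished by the Jacobi-field construction of Remark \ref{remark-wilking-distribution} (ultimately by Bott-equifocality of the $g$-SRF), and its point is precisely that it is a continuously varying, constant-rank envelope of the leaf tangent spaces: $\tilde g$-orthogonality to $V_t$ is then a condition that passes continuously through the singular time and implies $\tilde g$-orthogonality to the leaves, so the dimension jump ceases to be an obstruction. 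Your write-up never produces such an envelope, so the gap is genuine. A secondary flaw: your dichotomy ``$\tilde\gamma$ stays in $\mathcal M$ on an interval around $t_1$'' versus ``$\tilde\gamma$ leaves $\mathcal M$ immediately'' is not exhaustive, since $\{t:\tilde\gamma(t)\in\mathcal M\}$ is merely closed and may accumulate at $t_1$ without containing an interval; Corollary \ref{singular-points-isolated}, which would rule this out, is only available for $g$-horizontal $g$-geodesics, not for $\tilde g$-geodesics whose transnormality is the very thing being proved.
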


\begin{remark}[\cite{Alex6}]
\label{rem-prop-metric-stratum}

For the proof of the above proposition one uses only the following properties that are fulfilled by a SRF:
\begin{enumerate}
\item[(a)] Each stratum is a union of submanifolds.
\item[(b)] The sequence of plaques $\{P_{x_{n}}\}$ converges to $P_{x}$ when the sequence $\{x_{n}\}$ converges to $x$.
\item[(c)] The restriction of $\F$ to each stratum is a Riemannian foliation.
\item[(d)] For each vector $v$ that is not tangential to a stratum $\mathcal M$ there exists a curve $\beta:[0,\epsilon)\rightarrow M-\mathcal M$ and a distribution $t\rightarrow V_{t}$ along $\beta$  such that
 \begin{enumerate}
 \item[(d.1)]$T_{\beta(t)}P_{\beta(t)}\subset V_{t},$
  \item[(d.2)]$v$ is orthogonal to $V_{0}.$
\end{enumerate}
\end{enumerate}
That means, \emph{a singular foliation $\F$ on a Riemannian manifold $(M,g)$ with the above properties is in fact a SRF with respect to $g$.}
We will use this fact in  the proof of Theorem \ref{thm-Blowup-srf}

\end{remark}

%%%%%%%%%%%%%%%%%%%%%%%%%%%%%%%%%%%%%%
%%%%%%%%%%%%%%%%%%%%%%%%%%%%%%%%%%%%%%%
\section{Bott-equifocality}\label{sec:Bottequifocality}

Let  $G\times M\to M$ be a proper isometric action  and $G(x)$ a \emph{principal orbit}, i.e.,
its isotropy representation is trivial. Thus one can extend each normal vector $\xi\in\nu_{x}G(x)$
to an equivariant normal vector field $\xi$ along $G(x)$. By the $G$-equivariance of the normal exponential map of $G(x)$ we  have that $\{\exp_{y}(\xi(y)) | y\in G(x)\}$ is an orbit of the action. Moreover, the action on $M$ is determined by the action on the normal bundle.

Similarly, for a leaf $L$ of a SRF without holonomy we can characterize a normal vector field of $L$ that maps to a leaf under the normal exponential map. As we will see in this section, it is a restriction of a basic vector field, which corresponds to the equivariant field in the case above, along $L$.

The \emph{Bott} or \emph{basic connection} $\nabla$ of the Riemannian foliation $\F|\mathcal M_r$ on the regular stratum $\mathcal M_r$ is the connection of $\nu L$ defined by $\nabla_XY=[X,Y]^{\nu}$ whenever $X$ is a vector field tangential to $\F$ and $Y$ is vector field of the normal bundle of the foliation. Here the superscript $\nu$ denotes projection onto $\nu\F$. Locally a Bott-parallel field is a basic field. The restriction of this connection to the normal bundle of a regular leaf is flat.

%In the particular case of   SRF that admits sections  a normal foliated vector field is a parallel normal field along each regular leaf $L$ with %respect to the induced Levi-Civita connection on $\nu L$ and vice versa.

For a regular Riemannian foliation  we already know that on a complete manifold $M$ \emph{for a normal field $\xi$ parallel with respect to the Bott connection the curve $t\mapsto \exp(\xi\circ\beta)$ is  contained in the leaf $L_{\exp(\xi_{\beta(0)})}$ for any curve $\beta:[0,1]\rightarrow L$.  }

As we will see in Theorem \ref{thm-s.r.f.-equifocal} below, this property still holds even for singular Riemannian foliations and will be called
\emph{Bott-equifocality}. It allows us to give an alternative definition of the usual holonomy map.

\begin{definition}
\label{definition-holonomy-map}
Let $L$ be a regular leaf, $[\beta]$  the class of homotopies of a curve $\beta$ contained in $L$ fixing end points  and $\sigma_{\beta(i)}=\{\exp(\xi_{\beta(i)}) | \xi\in \nu_{\beta(i)} L, \|\xi\|<\epsilon\}$ for $i=0,1$ a transversal submanifold to the foliation at $\beta(i)$, where $\epsilon>0$ such that $\exp|\nu_{\beta(0)}^\epsilon L$ is a diffeomorphism.  Then the map
 $\varphi_{[\beta]}: \sigma_{\beta(0)}\rightarrow \sigma_{\beta(1)}$ defined as
$$\varphi_{[\beta]}(\exp(\xi(0)))=\exp(\xi(1)),$$
is called \emph{a (normal) holonomy map.} Here  $t\to \xi(t)$ is a normal field along $\beta$ parallel with respect to the Bott connection.
\end{definition}
\begin{remark}
Due to equifocality we have  $\varphi_{[\beta]}(x)\in L_{x}$ for all curves $\beta$ as above. Hence,
for a small $\epsilon,$ the map $\varphi_{[\beta]}$ coincides with the usual holonomy map. Also note that $\sigma_{\beta(i)}$ can contain singular points. This turns out to be very useful in the study of the Weyl group/pseudogroup of polar foliations as we will see in the next section.
\end{remark}

\begin{theorem}[\cite{AlexToeben2} Bott-equifocality]
\label{thm-s.r.f.-equifocal}
Let $\F$ be a SRF on a complete Riemannian manifold $M$. Then for each  regular point $p$ there exists a neighborhood $P_p$ of $p$ in $L_{p}$ such that the following holds:
\begin{enumerate}
\item[(1)] For each   normal foliated vector field $\xi$  along $P_p$  the derivative of  the end point map    $\eta_{\xi}:P_p\rightarrow M,$ defined as $\eta_{\xi}(x):=\exp(\xi_{x}),$ has constant rank.
\item[(2)] $W:=\eta_{\xi}(P_p)$ is an open subset of $L_{\eta_{\xi}(p)}$.
\end{enumerate}
\end{theorem}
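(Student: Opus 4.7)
My plan is to identify the endpoint map $\eta_\xi$, via the auxiliary structure of Theorems \ref{lemma-almost-product} and \ref{lemma-metric-in-S}, with the $\F^2$-translation between slices, evaluated at the single point $q := \exp(\xi_p)$. Recall that $\F^2$ is the auxiliary regular foliation on $\tub(P_p)$ with plaques $P^2_y \subset P_y$ meeting each slice transversally in exactly one point. Since every $\F^2$-plaque lies in an $\F$-plaque, such an identification would immediately force $W := \eta_\xi(P_p)$ into the $\F$-plaque through $q$, hence into $L_q = L_{\eta_\xi(p)}$, and display $\eta_\xi$ as a smooth family of slice-to-slice identifications, from which constant rank and openness follow at once.

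\textbf{Setup and linearisation.} I first shrink $P_p$ to be simply connected with $\|\xi\|$ less than the tube radius, so that $\eta_\xi(P_p)\subset\tub(P_p)$ and both Theorems \ref{lemma-almost-product} and \ref{lemma-metric-in-S} apply, and then work in the flattened metric $g_0$ from Theorem \ref{lemma-metric-in-S}. By Remark \ref{rem-prop-flat-metric-in-S} the geodesics $t\mapsto\exp(t\xi_y)$, and hence $\eta_\xi$ itself, are unaffected by the change of metric. In $g_0$ each slice $S_y$ is a flat Euclidean ball under the isometry $\exp_y:T_yS_y\to S_y$, so the point $\eta_\xi(y)=\exp(\xi_y)$ is literally the vector $\xi_y$ inside the linear slice $S_y$.

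\textbf{Matching the trivialisations.} The heart of the argument is to show that the flat Bott connection on $\nu L|_{P_p}$ and the infinitesimal $\F^2$-translation between slices agree on normal vectors. The mechanism is this: $\F^2$ is built by composing flows of the tangential foliated fields $X_1,\ldots,X_r\in\singularF$ used in the proof of Theorem \ref{lemma-almost-product}, while the Bott connection is defined by $\nabla^B_X Y=\pi_\nu[X,Y]$ using brackets with exactly such fields; consequently the infinitesimal $\F^2$-transport on normal vectors is precisely Bott parallel transport. A Bott-parallel field $\xi$ along $P_p$ is therefore ``constant'' in the $\F^2$-trivialisation of $\nu L|_{P_p}$, and, exponentiating along the flat slices, this translates into the key identity $\eta_\xi(y)=\tau_y(q)$, where $\tau_y:S_p\to S_y$ is the $\F^2$-translation. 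Hence $W=\tau_{(\cdot)}(q)$ is precisely the $\F^2$-plaque through $q$, which is open in $L_q$ of dimension $\dim L_q$, proving (2). The factorisation $\eta_\xi:P_p\to P^2_q\hookrightarrow L_q$ then exhibits $\eta_\xi$ as having constant rank $\dim L_q$ throughout $P_p$ (which may be less than $\dim L_p$ if $q$ is singular, but it is the same at every point of $P_p$), proving (1).

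\textbf{Main obstacle.} The delicate step is the identification of Bott parallel transport with the infinitesimal $\F^2$-translation on normal vectors. This requires careful bookkeeping between the flows $\varphi^j_{t_j}$ used to build $\F^2$ and the bracket-based definition of the Bott connection, together with some care about the possibly singular behavior of $\F^2$ when $q$ lies on a singular stratum. The two modified metrics from the previous section are precisely what make this reduction feasible: $\tilde g$ places $\F^2$ orthogonal to slices, while $g_0$ flattens each slice into a Euclidean ball via $\exp_y$, so the whole identification ultimately becomes a linear statement about normal vectors.
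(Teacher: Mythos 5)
Your setup (shrinking $P_p$, passing to the flattened metric $g_0$ of Theorem \ref{lemma-metric-in-S}, using Remark \ref{rem-prop-flat-metric-in-S} to keep the horizontal geodesics) matches the paper's, but the step you yourself flag as the ``main obstacle'' is not a bookkeeping issue: it is the entire content of the theorem, and the mechanism you propose does not close it. The agreement of the Bott connection with the linearized flows of the fields $X_i$ that build $\F^2$ is a statement about \emph{linear} transport on $\nu L$ over the \emph{regular} stratum. Your key identity $\eta_\xi(y)=\tau_y(q)$ asserts in addition that the normal exponential map intertwines this linear transport with the $\F^2$-holonomy applied to the \emph{point} $q=\exp(\xi_p)$. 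For a regular Riemannian foliation this is the classical fact the paper quotes just before the theorem; when $q$ is singular it is precisely Bott-equifocality, so invoking it is circular. Two further obstructions to your reduction: for singular $y$ the fields $X_1,\dots,X_r$ chosen to span $T_pP_p$ need not define an immersed $r$-dimensional plaque $P^2_y$ (the construction of $\F^2$ in Theorem \ref{lemma-almost-product} is centered at a point of locally minimal leaf dimension, not at a regular $p$ whose tube contains singular leaves), so ``the $\F^2$-translation $\tau_y$'' is not available as stated; and the flows $\varphi^i_t$ are not isometries, so they do not commute with $\exp^\nu$, which is why infinitesimal agreement of transports does not exponentiate for free. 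The paper closes exactly this gap with Lemma \ref{lemma-jacobi-field-itens}: in the metric $g_0$ the transversal $\sigma=\exp_q(\mbox{ball in a subspace})$ is totally geodesic along the horizontal geodesic $\gamma$, hence $T_{\gamma(t)}\sigma$ and $(T_{\gamma(t)}\sigma)^{\perp}$ are parallel families invariant under $R(\gamma',\cdot)\gamma'$; the Jacobi field $J$ with $J(1),J'(1)\in(T_{\gamma(1)}\sigma)^{\perp}$ therefore stays in $(T_{\gamma(t)}\sigma)^{\perp}$, which at regular times is the tangent space of the plaque, and continuity handles the singular time $t=0$. Nothing in your proposal substitutes for this curvature/Jacobi-field argument, and the global statement still needs the covering argument along $\gamma$ using Corollary \ref{singular-points-isolated}.

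A secondary gap: even granting $\eta_\xi(P_p)\subset L_q$, the factorization $P_p\to P^2_q\hookrightarrow L_q$ does not yield constant rank equal to $\dim L_q$ --- a smooth map into a $k$-dimensional manifold can have rank anywhere between $0$ and $k$, varying from point to point. Item (1) requires controlling $d\eta_\xi$ itself, which is computed by the Jacobi fields $J$ of Lemma \ref{lemma-jacobi-field-itens} (its kernel at $y$ consists of the variations with $J(1)=0$); it is the lemma, together with the dimension count in the spirit of Remark \ref{remark-wilking-distribution}, that makes the rank constant, not the mere inclusion of the image in a lower-dimensional leaf.
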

\begin{remark}
The  same result and proof are valid for singular leaves, if one considers foliate vector fields tangential to the stratum.
\end{remark}

 The above theorem implies that one  can reconstruct the  (singular) foliation by taking all Bott-parallel submanifolds of a regular leaf with trivial holonomy.

\begin{corollary}[\cite{AlexToeben2}]
\label{cor-equifocality}
Let $L_{p}$ be a regular  leaf with trivial holonomy and $\Xi$ denote the set of all  normal foliate vector  fields along $L_{p}.$
\begin{enumerate}
\item[(1)] Let $\xi\in \Xi$. Then $\eta_{\xi}:L_{p}\rightarrow L_{q}$ is a covering map if $q=\eta_{\xi}(p)$ is a regular point.
\item[(2)]  $\F=\{\eta_{\xi}(L_{p})\}_{\xi\in \, \Xi},$ i.e., we can reconstruct the singular foliation by taking all Bott-parallel submanifolds of the regular leaf $L_{p}.$
\end{enumerate}
\end{corollary}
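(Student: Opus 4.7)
The plan is to derive both parts of the corollary from the Bott-equifocality theorem (Theorem \ref{thm-s.r.f.-equifocal}), together with the horizontality of minimizing geodesics guaranteed by the SRF property.

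For part (1), fix a normal foliate field $\xi\in\Xi$ with $q=\eta_\xi(p)$ regular. First I would show that $\eta_\xi(L_p)\subset L_q$ by a connectedness argument: for any $p'\in L_p$, choose a continuous path $\gamma:[0,1]\to L_p$ from $p$ to $p'$. By Theorem \ref{thm-s.r.f.-equifocal}(2), for every $t$ there is a neighborhood $U_t$ of $\gamma(t)$ in $L_p$ with $\eta_\xi(U_t)\subset L_{\eta_\xi(\gamma(t))}$, so $t\mapsto L_{\eta_\xi(\gamma(t))}$ is locally constant and hence constant, giving $\eta_\xi(p')\in L_q$. In particular $\dim L_{\eta_\xi(p')}=\dim L_q=\dim L_p$ for every $p'\in L_p$, so all image points are regular.

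Next I would upgrade constant rank to a local diffeomorphism. Theorem \ref{thm-s.r.f.-equifocal}(1) gives a neighborhood $P_{p'}$ on which $\eta_\xi$ has constant rank, and (2) says $\eta_\xi(P_{p'})$ is open in $L_q$; since $L_p$ and $L_q$ have equal dimension, the rank must equal $\dim L_p$, so $\eta_\xi|_{P_{p'}}$ is a local diffeomorphism. To promote this to a covering map, I would show $\eta_\xi(L_p)$ is both open and closed in $L_q$ (openness is immediate; closedness follows from the fact that any convergent sequence in the image pulls back along minimizing geodesics to $L_p$, using completeness of $M$). Then around any $q'\in L_q$, a small tubular neighborhood of $L_p$ together with the fact that $\eta_\xi^{-1}(q')$ is discrete (local diffeomorphism) yields evenly covered neighborhoods in the standard way.

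For part (2), given any $y\in M$, use completeness of $M$ to choose a minimizing geodesic $\sigma$ from $L_p$ to $y$, realized at some $p_0\in L_p$. Such a geodesic is perpendicular to $L_p$ at $p_0$, so $\dot\sigma(0)=\xi_{p_0}\in\nu_{p_0}L_p$. The triviality of the holonomy of $L_p$ lets me extend $\xi_{p_0}$ to a globally defined Bott-parallel normal field $\xi\in\Xi$ along $L_p$: Bott-parallel transport along any loop at $p_0$ is the identity, so the extension is well-defined independent of the path. Then $\eta_\xi(p_0)=\exp(\xi_{p_0})=y$, hence $y\in\eta_\xi(L_p)$. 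Combined with the inclusion $\eta_\xi(L_p)\subset L_y$ established as in part (1) (which holds verbatim for singular targets, though the map need no longer be a covering), this gives $\F=\{\eta_\xi(L_p)\}_{\xi\in\Xi}$.

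The main technical obstacle is the covering property in (1): the local diffeomorphism and surjectivity steps are routine once Theorem \ref{thm-s.r.f.-equifocal} is in hand, but upgrading to a genuine covering requires a global argument. The cleanest route is to observe that $\eta_\xi$ factors through the normal exponential map of $L_p$ restricted to the Bott-parallel section $\xi$, and to exploit completeness of $M$ together with the fact that the preimages of plaques in a distinguished neighborhood of $q'$ are disjoint translates under Bott-parallel sections differing by fiber elements that are discrete because the target leaf is regular.
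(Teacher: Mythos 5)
Your part (1) follows the intended derivation from Theorem \ref{thm-s.r.f.-equifocal}: the locally-constant-leaf argument, the rank count, and the conclusion that $\eta_\xi$ is a local diffeomorphism are all correct. The covering step, however, is where the real work lies, and the shortcut ``surjective local diffeomorphism with discrete fibres yields evenly covered neighbourhoods in the standard way'' is false as stated (consider $(0,2)\to S^1$, $t\mapsto e^{2\pi i t}$). The mechanism that actually produces evenly covered neighbourhoods is the one you only gesture at in your final paragraph: over a plaque $P_{q'}$ of $L_q$ one builds local sections of $\eta_\xi$ by Bott-parallel extension of the reversed endpoint velocities $-\dot\gamma(1)\in\nu_{q'}L_q$ and a second application of equifocality, now to the regular leaf $L_q$. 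Carrying this out requires the additional fact that the endpoint velocities of the geodesics $t\mapsto\exp(t\xi_x)$ themselves constitute a Bott-parallel normal field along $\eta_\xi(P_x)$, which does not follow from the bare statement of Theorem \ref{thm-s.r.f.-equifocal} and must be extracted from its proof.

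The more serious gap is that both your closedness argument in (1) and your surjectivity argument in (2) treat $L_p$ as if it were a closed embedded submanifold. Leaves of a SRF are only immersed; the paper explicitly points to suspension examples of polar foliations with non-embedded (hence non-closed) leaves. For such an $L_p$ the infimum $d(L_p,y)$ need not be attained at any point of $L_p$ (for instance when $L_p$ is dense), so the foot point $p_0$ of your minimizing geodesic in part (2) may simply not exist; likewise, in part (1) an ambient limit $x_\infty$ of points $x_n\in L_p$ need not lie in $L_p$, and even if it does, $\xi_{x_n}\to\xi_{x_\infty}$ can fail because $\xi$ is only continuous for the leaf topology. Closedness of the image must therefore be argued plaque-by-plaque using the reversed parallel fields, and the statement that every point of $M$ is reached must come from an open-and-closed argument (or from covering a horizontal geodesic by tubular neighbourhoods, as in the paper's proof of the theorem, using Corollary \ref{singular-points-isolated}) rather than from a single global minimizing geodesic. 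Finally, even granting that every $y\in M$ lies in some $\eta_\xi(L_p)$, the equality $\F=\{\eta_{\xi}(L_{p})\}_{\xi\in\Xi}$ requires each $\eta_\xi(L_p)$ to be the \emph{entire} leaf through its points; your sketch establishes openness and closedness in the target leaf only when that leaf is regular.
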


\begin{proof}

One should first verify  Theorem \ref{thm-s.r.f.-equifocal} for some tubular neighborhood $\tub(P_{q})$ for each point $q$. Then the  theorem is proved by covering a horizontal geodesic with such tubular neighborhoods; note that a horizontal geodesic through a regular point meets singular leaves at isolated times due to Corollary \ref{singular-points-isolated}. Then the result follows by applying the local result successively.
In order to obtain a local version of the result it suffices to have an infinitesimal one. More precisely it suffices to prove the next lemma.

%In order to prove Theorem \ref{thm-s.r.f.-equifocal} we should first prove this result in a tubular neighborhood $\tub(P_{q})$. Then the theorem

\begin{lemma}
\label{lemma-jacobi-field-itens}
Let $q$ be a singular point and $x_0$ in a small tubular neighborhood $\tub(P_{q})$ and let $\xi$ be the appropriate parallel normal vector field of the plaque $P_{x_{0}}$ with $\exp(\xi_{x_0})=q$.
The geodesic $\gamma(t):=\exp((t+1)\,\xi_{x_{0}})$ lies in $\tub(P_{q})$ for all $t\in[-1,1]$ and $q=\gamma(0)$ is the only singular point on $\gamma.$
Define $f(s,t)=\exp(t\xi_{\alpha(s)})$ and $J(t)=\frac{\partial f}{\partial s}(0,t),$ where $\alpha(s)$ is a curve in $P_{x_0}.$
Then the Jacobi field $J$ is always tangential to the leaves.
\end{lemma}

To prove the lemma, let $g_{0}$ be the metric defined in Theorem \ref{lemma-metric-in-S}.
Then Remark \ref{rem-prop-flat-metric-in-S} implies that $J$ is still a Jacobi field for the new metric.

%Now consider a geodesic segment $\gamma$ orthogonal to the leaves of $\F$ so that $\gamma(0)=q$ and $\gamma(1)$ is a regular point contained in $S_q$ . It follows from
%Corollary \ref{singular-points-isolated} that $\gamma(t)$ is always regular for $ -1\leq t<0$ and $0<t\leq 1.$

We define $\sigma$ as the submanifold contained in $S_{q}$ which is the image by $\exp_q$ of a ball in a subspace and so that $\sigma$ is orthogonal to $L_x$ at $x\in S_q.$

By Proposition \ref{lemma-slice-fundamental},  Theorem \ref{lemma-metric-in-S} and Proposition \ref{homothetic-lemma} the plaques $P_{\gamma(t)}\cap S_{q}$ are orthogonal to $\sigma$ for $-1\leq t\leq 1$. Then it follows from Theorem \ref{lemma-metric-in-S} that the plaques $P_{\gamma(t)}$ are orthogonal to $\sigma$ for $-1\leq t\leq 1.$

For a fixed $t$ consider a geodesic segment $\beta$ so that $\beta(0)=\gamma(t)$ and $\beta$ is orthogonal to $P_{\gamma(t)}$. Then Theorem \ref{lemma-metric-in-S} implies that $\beta$ is contained in $S_{q}.$ Since $S_q$ is identified with $T_{q}S_{q}$  we can consider $\beta$ as a straight line. Since $P_{\gamma(t)}\cap S_{q}$ is orthogonal to $\sigma$, and $\sigma$ is identified with a ball in a subspace, we conclude that $\beta$ is contained in $\sigma$.

Therefore $\exp_{\gamma(t)}(\nu P_{\gamma(t)}\cap B_{\epsilon}(0))$  is an open set of $\sigma$ and the second fundamental form is null at $\gamma(t)$, i.e., $\sigma$ is geodesic at $\gamma(t).$ In particular the curvature tensor $R$ of $\sigma$ is the same as the ambient space at $\gamma(t).$ This and the fact that $R(\gamma',\cdot)\gamma'$ is self-adjoint imply that
$T_{\gamma(t)}\sigma$ as well $(T_{\gamma(t)}\sigma)^{\perp}$ are families of parallel subspaces along $\gamma$ which are invariant under $R(\gamma',\cdot)\gamma'.$

Finally note $J(1),J'(1)\in (T_{\gamma(1)}\sigma)^{\perp}$. Thus the Jacobi equation implies $J(t)\in (T_{\gamma(t)}\sigma)^{\perp}$ for $-1\leq t\leq 1.$

As remarked above plaques $P_{\gamma(t)}$ are orthogonal to $\sigma$ for $-1\leq t\leq 1.$ Since $P_{\gamma(t)}$ are regular plaques for $t\neq0$  we conclude that $J(t)$ is always tangential to $P_{\gamma(t)}.$ This concludes the proof of Lemma \ref{lemma-jacobi-field-itens}.

\end{proof}

Bott-equifocality allows us to prove the next proposition that describes how the  plaques ``slide along the stratum". This proposition will be used in the last section.

\begin{proposition}[\cite{Alex6}]
\label{slidingalongStratum}
Let $\F$ be a SRF, $q$ a singular point and $\mathcal M_q$ the stratum containing $q$. For each $x\in P_{q}$ set $T_{x}:=\exp_{x}(\nu \mathcal M_p\cap B_{\epsilon}(0))$.
 Then  there exists a neighborhood $U$ of $q$ in $\mathcal M_p$ such that if a plaque $P$ of $\F$ meets $T_{q}$, we have
 $\emptyset\neq P\cap S_{x}\subset T_{x}$ for all $x\in U\cap P_{q}.$
  \end{proposition}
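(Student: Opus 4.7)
My plan is to combine Bott-equifocality with the local geometry of nearest-point projections. First I would shrink $\epsilon$ and $U$ so that $\tub_\epsilon(\mathcal M_q)$ is a bona fide tubular neighborhood of $\mathcal M_q$ with single-valued smooth nearest-point projection $\pi_{\mathcal M_q}$, and so that $\tub_\epsilon(P_q)\subseteq\tub_\epsilon(\mathcal M_q)$ with its own nearest-point projection $\pi_{P_q}$. The elementary but crucial observation is that for any $z\in\tub_\epsilon(P_q)$ with $\pi_{\mathcal M_q}(z)\in P_q$, the chain
\[
d(z,P_q)\le d\bigl(z,\pi_{\mathcal M_q}(z)\bigr)=d(z,\mathcal M_q)\le d(z,P_q)
\]
together with uniqueness of the foot point on $P_q$ forces $\pi_{\mathcal M_q}(z)=\pi_{P_q}(z)$. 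Consequently $z$ lies on the normal geodesic to $\mathcal M_q$ issuing from $\pi_{P_q}(z)$, i.e., $z\in T_{\pi_{P_q}(z)}$. Thus the proposition reduces to verifying the inclusion $\pi_{\mathcal M_q}(P)\subseteq P_q$.

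To establish this, fix $y\in P\cap T_q$ and write $y=\exp_q(v)$ with $v\in\nu_q\mathcal M_q$, $\|v\|<\epsilon$. Let $v^*\in T_yM$ denote the parallel transport of $v$ along the radial geodesic from $q$ to $y$; the vector $-v^*\in\nu_yP$ is the initial velocity of the horizontal geodesic that minimises the distance from $y$ to $\mathcal M_q$, with endpoint $q$. Extend $-v^*$ to a Bott-parallel (hence foliate) normal vector field $\xi$ along $P$. By Theorem \ref{thm-s.r.f.-equifocal} the endpoint map $\eta_\xi(z):=\exp_z(\xi(z))$ has image in a single leaf through $\eta_\xi(y)=q$, namely $L_q$; by shrinking $P$ around $y$ and $U$ around $q$ and invoking the continuity of plaques (property (b) of Remark \ref{rem-prop-metric-stratum}), we may arrange $\eta_\xi(P)\subseteq P_q$.

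Next I would identify $\eta_\xi$ with $\pi_{\mathcal M_q}$ on $P$. Bott-parallel transport preserves lengths, so $\|\xi(z)\|=\|v\|$ for every $z\in P$. On the other hand, since $\F$ is a SRF, the function $z\mapsto d(z,\mathcal M_q)^2$ is transnormal (as in the proof of Theorem \ref{lemma-almost-product}) and hence constant along $L_y$, giving $d(z,\mathcal M_q)=\|v\|$ as well. Therefore the horizontal geodesic $t\mapsto\exp_z(t\xi(z))$ has length exactly $d(z,\mathcal M_q)$ and lands on $\mathcal M_q$; uniqueness of foot points in a small tube yields $\eta_\xi(z)=\pi_{\mathcal M_q}(z)\in P_q$. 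Combining with the first paragraph, any $z\in P\cap S_x$ then satisfies $\pi_{\mathcal M_q}(z)\in P_q$ and $\pi_{P_q}(z)=x$, whence $\pi_{\mathcal M_q}(z)=x$ and $z\in T_x$.

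The main obstacle is the identification $\eta_\xi=\pi_{\mathcal M_q}$, which simultaneously uses Bott-equifocality, the equidistance of leaves of $\F$ from the stratum $\mathcal M_q$, and uniqueness of horizontal minimisers in a small tube. A mild technical subtlety is that $y$ may itself be singular (in a stratum intermediate between $\mathcal M_q$ and the regular one); this is handled by invoking the extension of Theorem \ref{thm-s.r.f.-equifocal} to singular leaves mentioned in the Remark following it, or, more cheaply, by approximating $y$ by regular points of $P$ and passing to the limit using the continuity of $\pi_{\mathcal M_q}$.
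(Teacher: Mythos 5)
The paper itself gives no proof of Proposition \ref{slidingalongStratum}: it only states it with a citation to \cite{Alex6} and the one-line hint that Bott-equifocality is the key tool. Your argument is a natural and essentially correct implementation of exactly that hint: reduce the claim to showing that the foot-point projection onto $\mathcal M_q$ maps the plaque $P$ into $P_q$, and then identify that projection, restricted to $P$, with the end point map $\eta_\xi$ of a Bott-parallel normal field via Theorem \ref{thm-s.r.f.-equifocal} together with the equidistance of the leaves from the saturated set $\mathcal M_q$. So the approach matches the paper's stated route.

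Three loose ends are worth tightening. First, your ``cheaper'' fallback for singular $y$ --- approximating $y$ by regular points \emph{of $P$} --- is vacuous: if $P$ is a singular plaque it contains no regular points. You must use the singular-leaf version of Theorem \ref{thm-s.r.f.-equifocal} (the Remark following it), and for that you should check that $-v^*$ is tangent to the stratum $\mathcal M_y$ of $y$; this does hold, since by Proposition \ref{homothetic-lemma} the homotheties force $\dim L_{\gamma(t)}$ to be non-increasing in $t$, hence constant near $t=1$, so the radial geodesic lies in $\mathcal M_y$ near $y$. Second, you never verify the non-emptiness $P\cap S_x\neq\emptyset$; it follows from part (2) of Theorem \ref{thm-s.r.f.-equifocal}, since $\eta_\xi(P)=\pi_{\mathcal M_q}(P)$ is \emph{open} in $L_q$ and therefore contains $U\cap P_q$ for $U$ small. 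Third, a minor imprecision: transnormality of $d(\cdot,\mathcal M_q)^2$ does not by itself give constancy along leaves; what you actually need is that $\mathcal M_q$ is a union of leaves and that leaves of a SRF are locally equidistant, which is the standard fact underlying the proof of Theorem \ref{lemma-almost-product}. With these repairs the proof goes through.
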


\begin{remark}
\label{remark-wilking-distribution}
Let $V$ denote a vector space of Jacobi fields orthogonal to a geodesic $\gamma$. Assume $g(J_1(t),J_{2}'(t))=g(J_{1}'(t),J_{2}(t))$ for all $J_1, J_2\in V$. Set
$$V(t):=\{J(t)| J\in V\}\oplus \{J'(t)| J\in V, J(t)=0\}\subset  T_{\gamma(t)}M $$
As remarked by Wilking \cite{WilkGAFA} (see also Gromoll and Walshap \cite[Lemma 1.7.1]{Gromoll-Walshap}) $\dim V(t)=\dim V$ and the second summand is trivial for almost every $t$.

If $\gamma$ is a geodesic orthogonal to a SRF $\F$ then, due to equifocality, one can choose $V$ such that $V(t)$ is the tangent space of $L_{\gamma(t)}$, if $\gamma(t)$ is a regular point. Therefore we can define $H(t)$ as the orthogonal space to $V(t)$  and this distribution is defined for all $t$ and is always orthogonal to the leaves of $\F$.
\end{remark}

%%%%%%%%%%%%%%%%%%%%%%%%%%%%%%%%%%%%%%%%%%%%
\section{Polar foliations}\label{sec:polar}

\subsection{Definition and examples}

A singular Riemannian foliation $\F$ is called  a \emph{polar foliation} (or a \emph{singular Riemannian foliations with sections})
if, for each regular point $p$, there is an immersed submanifold $\Sigma_{p}$, called \emph{section},  that passes through $p$ and that meets all the leaves and always perpendicularly. It follows that $\Sigma_p$ is totally geodesic and that the dimension of $\Sigma_{p}$ is equal to the codimension of the regular leaf $L_{p}$. This is equivalent to saying that  the normal distribution of the regular leaves of  $\F$  is integrable; see Theorem \ref{teo-integra-polar}.

A typical example of a polar foliation is the partition  of a Riemannian manifold into the orbits  of a \emph{polar action}, i.e., an isometric action with sections.

 %Note that there are  examples of polar foliations on Euclidean spaces (i.e., \emph{isoparametric foliations}) with inhomogeneous leaves; see
 %Ferus \emph{et al.}~\cite{FerusKarcherMunzner}.  Others examples can be constructed by suspension of homomorphism, suitable changes of metric and surgery;
%see \cite{Alex2,Alex4,AlexToeben}.

Another important class of  examples  is the partition of a Euclidean space into the parallel submanifolds  of an isoparametric submanifold $N.$
Recall that a submanifold $N$ of a Euclidean space is called \emph{isoparametric}
if its normal bundle is flat and the principal curvatures along any
parallel normal vector field are constant. Note that there are  examples of inhomogeneous isoparametric submanifolds,
see Ferus, Karcher and Munzner~\cite{FerusKarcherMunzner}; a comprehensive introductionto the theory of isoparametric submanifolds can be found in the book of Berndt, Console and Olmos \cite{BerndtConsoleOlmos} and in Terng and Palais \cite{PalaisTerng}.

As we will see in the \emph{slice theorem}
(Theorem \ref{teo-slice-polar}), if
 $\F$ is polar then the infinitesimal foliation $\hat{\F}(q)$ is polar and hence isoparametric.

An isoparametric submanifold in a Euclidean space is a particular example of an equifocal submanifold; the latter notion was introduced by  Terng and Thorbergsson \cite{TTh1}.

A connected immersed submanifold $L$ of a complete Riemannian manifold
$M$ is called \emph{equifocal} if it satisfies the following
conditions:
\begin{enumerate}
\item The normal bundle $\nu L$ is flat.
\item $L$ has sections, i.e.~for each~$p\in L$,
the set $\Sigma :=\exp_{p}(\nu_p L_{p})$ is a complete immersed totally
geodesic submanifold.
\item For each parallel normal field $\xi$ on a neighborhood $U \subset L$,
the derivative of the end point map $\eta_{\xi}:U\to M$ defined
by $\eta_{\xi}(x):=\exp(\xi_{x})$ has constant rank.
\end{enumerate}

Note that the regular leaves of a polar foliation are equifocal. This follows from Theorem \ref{thm-s.r.f.-equifocal}  and from the fact that the normal connection and the Bott-connection coincide because of Theorem 5.5.12 in \cite{PalaisTerng}; see also \cite{Alex2}.

\begin{remark}
The converse, that the partition of a Riemannian manifold into parallel submanifold of an equifocal submanifold is a polar foliation, is also true under an additional assumption. More precisely,  T\"{o}ben~\cite{Toeben} proved the following result:
Let $L$ be a closed embedded equifocal submanifold with a globally flat normal bundle.  Suppose that set of regular points of each section $\Sigma$ is an open and dense set in $\Sigma.$ Define  $\Xi$ as the set of all parallel normal fields along $L.$ Then $\F:=\{\eta_{\xi}(L)\}_{\xi\in \, \Xi}$ is a polar foliation.
 Here  a point of a section is called regular if there exists only one local section  that contains $p,$ i.e., any two local sections $\sigma$ and $\tilde{\sigma}$  that contain $p,$ have the same germ at $p.$  See  \cite{Alex3} for an alternative proof of this result.
\end{remark}

\begin{remark}\label{rem:EquifocalWeyl}
As we have remarked in the last section, the equifocality of the leaves of $\F$ allows us to give an alternative definition of holonomy maps. We remark that their domains do not depend on the choice of a tubular neighborhood of the leaves and may contain singular points. This allows us to define a group action on the sections, which coincides with the Weyl group action for polar actions and isoparametric submanifolds; see the discussion in Subsection \ref{subsection-weylgroup}.
\end{remark}

Finally, let us remark that more examples of polar foliations can be constructed by
a suitable change of the metric and surgery (see \cite{AlexToeben}) or by a
suspension of a homomorphism (see \cite{Alex2,Alex4}). In particular, by means of a suspension of a homomorphism, one can also construct an example of a polar foliation $\F$ on a non-simply connected space with leaves that are not embedded and so that the partition formed by the closure of the leaves of $\F$ is a singular Riemannian foliation. This kind of example illustrates Molino's conjecture that the closures of the leaves of a SRF is again a SRF, see \cite{Alex4} for the proof of this conjecture in the case of polar foliations.

%%%%%%%%%%%%%%%%%%%%%%%

%%%%%%%%  FAZER O SLICE THEOREM!!

\subsection{Integrability and the slice theorem}
\label{subsection-intebrability-slice}

In this section we prove the \emph{slice theorem} (Theorem \ref{teo-slice-polar}).
To prove this result we  need two key ingredients: an alternative definition of polarity via integrability (Theorem \ref{teo-integra-polar}), and a weaker version of the slice theorem for integrable foliations due to Boualem (Theorem \ref{teoBoualem}).

An \emph{integrable singular Riemannian foliation} (ISRF)  is a SRF such that horizontal distribution is integrable on the regular stratum. Therefore one has a regular foliation $\F^{\perp}$, whose leaves are then automatically totally geodesic, on the regular stratum $\mathcal M_r$ determined by the horizontal distribution. A \emph{regular local section} for $\F$ is an open subset of some $\F^{\perp}$-leaf.

%Let $\F$ be an ISRF  on a complete manifold $(M,\metric)$. For some $q \in M$, let $\sigma \subseteq B_{\varepsilon}(q)$ be a regular section and $S_q = %\exp_q(\mathcal{B}_{\varepsilon}(0))$ the $\varepsilon$-slice at $q$ (i.e., the exponential image of the normal $\varepsilon$-ball).

In what follows we review a result due to Boualem \cite{Boualem}, following arguments of Caceres \cite{diego}.

\begin{theorem}[\cite{Boualem}]
\label{teoBoualem}
Let $\F$ be an ISRF  on a complete manifold $(M,\metric)$. For any singular point $q \in M$, let $\sigma \subseteq B_{\varepsilon}(q)$ be a regular local section and $S_q = \exp_q^{\nu}(\mathcal{B}_{\varepsilon}(0))$ the $\varepsilon$-slice at $q$.
Assume that the intersection $\sigma \cap S_q$ is nonempty. Then $\sigma$ is an open subset of $\exp_q^{\nu}(\mathcal{B}_{\varepsilon}(0)\cap V)$ for some vector space $V \subset T_q S_{q}$. Furthermore, the linearization $\hat{\F}(q)$ is an ISRF with $\hat{\sigma} := \exp_q^{-1}(\sigma)$ as a regular local section.
\end{theorem}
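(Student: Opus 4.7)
The plan is to reduce the statement to a linear-algebra statement in $T_qS_q$ via the exponential map. Using Theorem \ref{lemma-metric-in-S} I would work with the metric $g_0$, for which $\exp_q\colon(T_qS_q,\metric_q)\to(S_q,g_0)$ is an isometry and $\F|S_q$ remains a SRF. The central object is $\hat\sigma:=\exp_q^{-1}(\sigma)\subset T_qS_q$; the aim is to show $\hat\sigma$ is an open subset of a linear subspace $V$ through the origin. A preliminary step, using that $T_{x_0}\sigma=\nu_{x_0}L_{x_0}\subset T_{x_0}S_q$ (item (a) of Theorem \ref{lemma-almost-product}) together with total geodesicity of $\sigma$, lets me shrink $\sigma$ so that $\sigma\subset S_q$, making $\hat\sigma$ a subset of $T_qS_q$ as required.

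Next I would exhibit the radial structure of $\hat\sigma$. Fix $x_0\in\sigma\cap S_q$ and let $\xi_0:=\exp_q^{-1}(x_0)$. The geodesic $\gamma(t)=\exp_q(t\xi_0)$ is horizontal at $q$ (since $\xi_0\in\nu_qL_q$) and thus horizontal at every regular point by the SRF property; in particular $\gamma'(1)\in\nu_{x_0}L_{x_0}=T_{x_0}\sigma$. Total geodesicity of $\sigma$ then forces $\gamma$ to lie in $\sigma$ near $t=1$, and Corollary \ref{singular-points-isolated} (together with shrinking $\varepsilon$ if necessary) lets me extend this to $\gamma((0,1])\subset\sigma$. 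Running the same argument at each point of $\sigma\cap S_q$ shows that $\hat\sigma$ contains the open ray from $0$ to each of its points.

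To extract linearity, I would transfer to $(T_qS_q,\metric_q)$ via the $g_0$-isometry $\exp_q$, using Remark \ref{rem-prop-flat-metric-in-S} (horizontal geodesics emanating from $P_q$ are shared by $\metric$, $\tilde\metric$, and $g_0$) to argue $\hat\sigma$ is totally geodesic in the flat Euclidean space $T_qS_q$, hence an open piece of some affine subspace $p+V$. The ray-closure obtained above then forces $p=0$: for any $\hat x_0\in\hat\sigma$ the inclusion $(0,1]\cdot\hat x_0\subset p+V$ yields $(t_1-t_2)\hat x_0\in V$ for distinct $t_i\in(0,1]$, so $\hat x_0\in V$, and in turn $p\in V$, giving $p+V=V$. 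For the ISRF claim, I would apply the same reasoning at an arbitrary regular $\hat y\in T_qS_q$: using the homothety-invariance of $\hat\F(q)$ (Proposition \ref{homothetic-lemma} and Definition \ref{definition-infinitesimalfoliation}) to place $\hat y$ inside the Euclidean ball modeling $S_q$, and picking a local regular section of $\F$ through $\exp_q(\hat y)\in S_q$ (which exists because $\F$ is ISRF), its image under $\exp_q^{-1}$ furnishes an integral of the horizontal distribution of $\hat\F(q)$ at $\hat y$.

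The main obstacle is reconciling the two metric structures involved. The section $\sigma$ is horizontal and totally geodesic only with respect to $\metric$, while $\exp_q$ is an isometry only with respect to $g_0$, and the horizontal distribution in general differs between the two metrics. To conclude that $\hat\sigma$ is totally geodesic in the Euclidean $T_qS_q$ one must pass the totally-geodesic property of $\sigma$ through the metric change, leveraging items (e)-(f) of Theorem \ref{lemma-almost-product} and Remark \ref{rem-prop-flat-metric-in-S} — that horizontal geodesics from $P_q$ coincide for $\metric$, $\tilde\metric$ and $g_0$, and that the transverse metric is unchanged. Once this orthogonality bookkeeping is sorted out, the remainder of the argument reduces to linear algebra in Euclidean space.
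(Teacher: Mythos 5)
Your outline has the right overall shape, and you have correctly located the crux — transferring orthogonality/total geodesicity from the original metric $\metric$ to the flat metric $g_0$ — but the proposal does not actually cross that gap, and the tools you cite cannot do it. Two concrete problems. First, the inclusion $T_{x_0}\sigma\subseteq T_{x_0}S_q$ does not follow from Theorem \ref{lemma-almost-product}(a): that item says the $\tilde{\metric}$-normal space of a leaf is tangent to the slice, whereas $T_{x_0}\sigma$ is the $\metric$-normal space $\nu_{x_0}L_{x_0}$, and the slice $S_q$ is built from the original metric; these are different objects, and the inclusion is genuinely nontrivial. The paper proves it (Lemma \ref{lema1-boualem}) by a Jacobi-field argument along the radial geodesic $\gamma$ from $q$ to $p$: a Jacobi field tangent to $N:=\exp_q(\mathcal{B}_\varepsilon(0)\cap V)$ splits, using total geodesicity of $\sigma$, into a part tangential to $\sigma$ and a part orthogonal to it; the orthogonal part vanishes at $t=1$ inside a normal neighborhood and hence vanishes identically, which shows $N$ is horizontal and $N\subset S_q$. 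Second, and more seriously, your step ``$\hat\sigma$ is totally geodesic in the Euclidean $T_qS_q$'' cannot be extracted from Theorem \ref{lemma-almost-product}(e)--(f) and Remark \ref{rem-prop-flat-metric-in-S}: those statements only govern the \emph{radial} geodesics issuing orthogonally from $P_q$ and the transverse distance between plaques. A non-radial $\metric$-geodesic lying in $\sigma$ is in general not a $g_0$-geodesic, and the $g_0$-isometry $\exp_q^{-1}$ has no reason to straighten it; so total geodesicity of $\sigma$ for $\metric$ does not pass to $\hat\sigma$ for $g_0$ by this route.

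The paper closes this gap by a different mechanism, which is the idea missing from your proposal: regular sections and plaques are invariant under the homotheties $h_\lambda$, hence $\sigma\cap S_q$ is a regular local section of $(\F|_{S_q},\metric_\lambda)$ for \emph{every} $\lambda>0$, with the same underlying set; since $\metric_\lambda\to\metric_0$ smoothly, $\hat\sigma$ is a regular local section of the ISRF $\hat{\F}(q)$ on the flat space. Total geodesicity of $\hat\sigma$ then comes for free, not by transport from $\sigma$, but because leaves of the horizontal foliation $\F^\perp$ of an ISRF are automatically totally geodesic \emph{in whatever metric makes the foliation an ISRF} — here the flat one. Combined with Lemma \ref{lema1-boualem} (which gives $T_{\hat p}\hat\sigma=V$ with the radial direction $\hat p\in V$), this forces $\hat\sigma$ to be open in the linear subspace $V$. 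Your radial-ray argument and the final linear algebra are fine, and your homothety remark for the ISRF claim is pointing in the right direction; but the proof needs (i) the Jacobi-field splitting to get $T_p\sigma\subseteq T_pS_q$ and the horizontality of $N$, and (ii) the $h_\lambda$-invariance of sections together with the limit $\metric_\lambda\to\metric_0$ to get orthogonality and total geodesicity in the flat metric. Without these, the ``orthogonality bookkeeping'' you defer is exactly the content of the theorem.
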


\begin{proof}
We begin with a lemma.

%We now proceed to prove the above.

%\footnote{aqui voce queria comentar nessa passagem sobre o $V$ que aparece no fim da demo, depois do lema \ref{lema1-boualem}} Since $\sigma$ is contained on a %totally geodesic leaf, a Jacobi field $J$ on $\sigma$ splits $J = J_{\sigma} + J^{\perp}$ in tangent and normal components, the second being null (consequence of %the correspondent splitting of the Jacobi equation, see Caceres \cite{diego}). Recalling that $\exp({\mathcal{B}_{\varepsilon}(0)})$ is a normal neighborhood, %one has the next lemma.

\begin{lemma} \label{lema1-boualem}
If  $\sigma $ intersects $S_q$ at a point $p$ then $T_p\sigma \subseteq T_pS_q.$
In addition there exists a subspace $V\subset T_{q}S_{q}$  such that $d(\exp_q)_xV= T_{p}\sigma$ for $p=\exp_q(x)$.
\end{lemma}
To prove the above lemma, let $V$ be the affine space such that $d (\exp_{q})_{x} V= T_{p}\sigma.$ Note that $V$ is a linear subspace because if $\gamma$ is a geodesic segment with $\gamma(0)=q$ and $\gamma(1)=p$ it must be orthogonal to $L_{p}$ and hence tangential to $\sigma$. Set  $N:=\exp_q(\mathcal{B}_{\varepsilon}(0)\cap V)$ and consider a Jacobi field $J$ along $\gamma$ and tangent to $N$. Since $\sigma$ is totally geodesic, one can prove that $J$ splits into two Jacobi fields, one $J_{\sigma}$ tangential to $\sigma$ and the other $J_{\perp}$ orthogonal. Those Jacobi fields are defined along $\gamma$ even at $t=0$. Since $J_{\perp}(1)=0$ and $\gamma$  is contained in a normal neighborhood we conclude that $J_{\perp}=0$. Therefore $J$ is tangential to $\sigma$. From this we conclude that $L_{\gamma(t)}$ is orthogonal to $N$ for $t>0$ and hence, by continuity $L_{q}$ is orthogonal to $N$. In other words $N\subset S_{q}$ and this finishes the proof of Lemma \ref{lema1-boualem}.

Now note that $\sigma$ is contained in $S_{q}$, because, by Lemma \ref{lema1-boualem}, $\sigma$ is tangential to the leaves of the foliation of $B_{\varepsilon}(q)$ by slices $\{ S_x \mid x \in P_q \}.$
%With Lemma \ref{lema1-boualem}, if we consider the foliation of $B_{\varepsilon}(q)$ by slices $\{ S_x : x \in P_q \}$ one sees $\sigma$ is tangent to the leaves hence cointained in a leaf $S_x$, i.e., in a slice.
Let $\metric_{\lambda}$ be the family of metrics defined in the proof of Theorem \ref{lemma-metric-in-S} and recall that $\metric_{\lambda}$ smoothly converges to $\metric_{0}$. Also note that regular sections and plaques are invariant under the homothety $h_{\lambda}$ thus $\F|_{S_q}$ on $(S_q,\metric_{\lambda})$ is an ISRF  (with the same regular sections) for every $\lambda > 0$. Therefore the linearization  $\hat{\F}(q)$ is an ISRF.
By Lemma \ref{lema1-boualem}, the totally geodesic submanifold $\hat{\sigma}$ is tangential to the subspace $V$. Therefore $\hat{\sigma}$ is an open subset of $V$
 and this concludes the proof of Theorem \ref{teoBoualem}.
\end{proof}

It is natural to ask if an ISRF is a polar foliation. In fact this question was proposed by
Palais and Terng~\cite[Remark 5.6.8]{PalaisTerng} for the case that $\F$ is a partition of $M$ into orbits of isometric action.
The next result gives a positive answer to this question for the more general class of singular Riemannian foliations.

\begin{theorem}[\cite{Alex4}]
\label{teo-integra-polar}
Let $\F$ be an ISRF on a complete Riemannian manifold $M.$  Then $\F$ is a polar foliation   and regular points are open and dense in each section.
\end{theorem}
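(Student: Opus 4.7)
The plan is to construct, for a fixed regular point $p$, a section through $p$ as the closure in $M$ of the leaf $\Sigma_0$ of $\F^{\perp}$ through $p$. By the ISRF hypothesis, $\Sigma_0$ is a totally geodesic immersed submanifold of the regular stratum $\mathcal{M}_r$, of dimension $d := \codim L_p$, meeting every regular leaf it encounters perpendicularly. Setting $\Sigma := \overline{\Sigma_0}$, the task is to control $\Sigma$ near singular points, show it is a smooth totally geodesic submanifold there, and verify that $\Sigma$ meets every leaf.

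At any singular point $q \in \Sigma \setminus \Sigma_0$, I would apply Theorem \ref{teoBoualem} to a regular local section $\sigma \subset \Sigma_0$ accumulating at $q$. It produces a $d$-dimensional subspace $V \subset T_q S_q$ such that $\exp_q^{-1}(\sigma)$ is an open subset of $V$, and the linearization $\hat{\F}(q)$ is an ISRF on $T_q S_q$ with $V$ as a regular local section. Set $N := \exp_q(V \cap B_{\varepsilon}(0))$. Since the singular strata of $\hat{\F}(q)$ are lower-dimensional closed subsets, their traces on $V$ form a proper lower-dimensional closed subset, so the regular points of $\F$ are open and dense in $N$; this yields the density assertion of the theorem.

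The crux is to show that $\Sigma$ coincides locally at $q$ with $N$. For each $v \in V$, the geodesic $t \mapsto \exp_q(tv)$ is horizontal at $q$, hence horizontal throughout by the SRF property. Using Remark \ref{rem-prop-flat-metric-in-S} and the flat model $g_0$ of Theorem \ref{lemma-metric-in-S}, one checks that at any regular point $x \in N$ the differential $d(\exp_q)_{v_0}$ sends $V$ into the horizontal space $\nu_x L_x$, so that $T_x N = \nu_x L_x$ by dimension. Hence $N$ agrees near $x$ with the local $\F^{\perp}$-leaf; by uniqueness and density of regular points in $N$, this local leaf is contained in (a connected component of) $\Sigma_0$, so $N \subset \Sigma$, and dimension matching yields $\Sigma = N$ near $q$. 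Consequently $\Sigma$ is a smoothly immersed totally geodesic $d$-dimensional submanifold at $q$. This matching of the linearized picture with $\overline{\Sigma_0}$ is the main technical obstacle, combining Boualem's theorem, density of regular points, and continuity of plaques.

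Finally, that $\Sigma$ meets every leaf $L_y$ follows by a distance-minimizer argument: a regular point $\tilde{q} \in \Sigma_0$ minimizing $d(\,\cdot\,, L_y)$ (existence obtained by approximating $L_y$ with a regular leaf and using density of regular points in $\Sigma$) connects to $L_y$ by a geodesic that is both perpendicular to $T_{\tilde{q}}\Sigma_0 = \nu_{\tilde{q}} L_{\tilde{q}}$ and horizontal at its $L_y$-endpoint, hence horizontal throughout by the SRF property, and therefore of zero length; this gives $\tilde{q} \in L_y \cap \Sigma$.
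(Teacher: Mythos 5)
Your overall strategy (take the $\F^{\perp}$-leaf $\Sigma_0$ through $p$ as the regular part of the section, control it near the singular set via Theorem \ref{teoBoualem}, and finish with a first-variation argument) is the right one, but the definition $\Sigma:=\overline{\Sigma_0}$ breaks the proof at two places. First, the topological closure of an $\F^{\perp}$-leaf is in general not a $d$-dimensional submanifold: already for the regular Riemannian foliation of a flat torus by lines of irrational slope, $\F^{\perp}$ is the orthogonal irrational linear foliation, $\Sigma_0$ is dense, and $\overline{\Sigma_0}$ is the whole torus, whereas the section is the immersed leaf $\Sigma_0$ itself. Second, and more seriously in the singular case, the local identification ``$\Sigma=N$ near $q$'' is not available for this $\Sigma$: the regular part of $N=\exp_q(V\cap B_{\varepsilon}(0))$ is typically disconnected by the traces of the singular strata; the component containing $\sigma$ does lie in $\Sigma_0$, but the remaining components are plaques of possibly \emph{different} $\F^{\perp}$-leaves, and there is no reason they should lie in $\overline{\Sigma_0}$. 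So $N\subset\overline{\Sigma_0}$ is unjustified, and with it both the smoothness of $\Sigma$ at $q$ and --- crucially --- the completeness of $\Sigma$ as a totally geodesic submanifold, which is what the ``meets every leaf'' step really needs: your minimizer of $d(\,\cdot\,,L_y)$ over the non-complete, non-closed $\Sigma_0$ need not exist, and if the infimum over $\overline{\Sigma_0}$ is attained only at a singular point $\tilde q$, the identity $T_{\tilde q}\Sigma=\nu_{\tilde q}L_{\tilde q}$ fails there.

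The paper resolves exactly this by building the section the other way around: it geodesically \emph{continues} the local section, covering each geodesic emanating from $D_{\epsilon}(q)$ by totally geodesic discs $D_i$ of a uniform radius $\delta$ (Lemma \ref{cor-discos-geodesicos}) and taking the union of all such continuations; the resulting set is shown to be an immersed submanifold by lifting each disc to the Grassmann bundle $G_k(TM)$, where total geodesy forces two lifted discs sharing a point to coincide near it, and completeness then follows from Hopf--Rinow. If you replace $\overline{\Sigma_0}$ by this geodesic continuation (so that the other components of the regular part of $N$ are included by construction), the rest of your outline --- Boualem's theorem, density of regular points via Corollary \ref{singular-points-isolated}, orthogonality of $N$ to the leaves, and the first-variation argument run from a regular point of the now complete $\Sigma$ --- goes through.
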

\begin{remark}
Particular cases of the above result where proved
by Molino and Pierrot~\cite{MolinoPierrot},
Boualem~\cite{Boualem} and Lytchak and Thorbergsson~\cite{LytchakThorbergsson1}. Heintze, Liu and Olmos~\cite{HOL} proved the above result when $\F$ was a partition of $M$ into orbits of an isometric action.
Szenthe~\cite{Szenthe} also worked on the question for isometric actions.
\end{remark}
\begin{proof}

We give a sketch of the proof and begin with a lemma. We retain the notation of Theorem \ref{teoBoualem}.
\begin{lemma}\label{lemmaslice}
We have:
\begin{enumerate}
\item[(a)]  The set of regular points of $\widehat{\F}(q)$ in $\mathcal{B}_{\varepsilon}(0)\cap V$ is open and dense; it is orthogonal to the leaves of $\widehat{\F}(q)$.
\item[(b)]The set of regular points of $\F$ in $D_{\epsilon}(q):=\exp_q(\mathcal{B}_{\varepsilon}(0)\cap V)$ is open and dense.
\item[(c)]The submanifold $D_{\epsilon}(q)$ is orthogonal to the leaves of $\F.$
\item[(d)] $D_{\epsilon}(q)$ is a totally geodesic submanifold.
 \end{enumerate}
\end{lemma}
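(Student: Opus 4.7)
The plan is to work first in the linearization $\widehat{\F}(q)$ on $T_qS_q$, where homothety invariance of the foliation provides strong rigidity (this gives (a)), then push (a) down to $M$ via $\exp_q$ (giving (b)), and finally to deduce (d) and (c) from the local coincidence of $D_\varepsilon(q)$ with regular local sections of $\F$ at regular points together with a density/continuity argument.

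For (a), I would start from Theorem \ref{teoBoualem}, which already provides that $\widehat{\F}(q)$ is an ISRF on $T_qS_q$ and that $\hat{\sigma}=\exp_q^{-1}(\sigma)$ is an open subset of $V\cap\mathcal{B}_\varepsilon(0)$ consisting of regular points of $\widehat{\F}(q)$, there orthogonal to the leaves. Openness of the full regular set inside $V\cap\mathcal{B}_\varepsilon(0)$ is then immediate since the regular set of any SRF is open. By Proposition \ref{homothetic-lemma} combined with the involution $h_{-1}$ of Proposition \ref{lemma-slice-fundamental}, $\widehat{\F}(q)$ is invariant under every $h_\lambda$ with $\lambda\neq 0$, so the singular set inside $V$ is a cone through the origin. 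To rule out that this cone has nonempty interior I would combine the dimensional identity $\dim V=\codim L_{\mathrm{reg}}$ with the fact that every singular stratum of $\widehat{\F}(q)$ is a smooth submanifold of strictly smaller dimension than the regular stratum, together with the observation (obtained by continuity from $\hat{\sigma}$) that $V$ must lie inside the horizontal distribution of any singular stratum it meets. Orthogonality of $V$ to the leaves at the remaining regular points then propagates from $\hat{\sigma}$ by homothety invariance and continuity. I expect this density step to be the main technical obstacle.

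Part (b) is then a direct translation of (a) via $\exp_q$: since $\widehat{\F}(q)=\exp_q^{*}(\F|S_q)$, the regular points of $\widehat{\F}(q)$ in $V\cap\mathcal{B}_\varepsilon(0)$ correspond bijectively, under $\exp_q$, to the regular points of $\F|S_q$ in $D_\varepsilon(q)$; the almost-product structure from Theorems \ref{lemma-almost-product} and \ref{lemma-metric-in-S} identifies regular points of $\F|S_q$ inside a small tubular neighborhood with regular points of $\F$, so both openness and density carry over.

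For (d) and (c), I would take any regular point $p\in D_\varepsilon(q)$, pick a regular local section $\sigma_p$ of $\F$ through $p$, and apply Theorem \ref{teoBoualem} to $\sigma_p$ and the slice $S_q$ to write $\sigma_p$ as an open subset of $\exp_q(\mathcal{B}_\varepsilon(0)\cap V_p)$ for some subspace $V_p\subset T_qS_q$. At the original intersection point $p_0\in\sigma\cap S_q$ one has $V_{p_0}=V$; by continuity of the horizontal distribution on the regular stratum together with $\dim V_p=\dim V$, this equality propagates to an open neighborhood of $p_0$, and then by connectedness of the regular stratum of $\F$ inside $D_\varepsilon(q)$ (which is dense by (b)) one obtains $V_p=V$ for every regular $p\in D_\varepsilon(q)$. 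Consequently $D_\varepsilon(q)$ coincides locally with the totally geodesic section $\sigma_p$ at each regular $p$, which yields both $T_pD_\varepsilon(q)\perp T_pL_p$ and the vanishing of the second fundamental form of $D_\varepsilon(q)$ at $p$. Since regular points are dense in $D_\varepsilon(q)$ by (b), the second fundamental form vanishes throughout by continuity, giving (d); orthogonality at a singular $p\in D_\varepsilon(q)$ then follows by taking limits along nearby regular points and using that $T_pL_p$ lies inside the limit of the tangent spaces of the nearby regular leaves, as in Remark \ref{remark-wilking-distribution}, giving (c).
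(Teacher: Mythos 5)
Your proposal for part (a) misses the one idea that makes the whole lemma short, and the substitutes you offer do not close the gap. The paper's argument is: since $\hat{\sigma}$ is a nonempty open subset of $V$ consisting of regular points orthogonal to the leaves, every straight line $l$ in $V$ starting in $\hat{\sigma}$ is a geodesic of $(T_qS_q,g_0)$ that leaves a regular leaf orthogonally, hence is a horizontal geodesic of $\widehat{\F}(q)$. Density of the regular set then follows immediately from Corollary \ref{singular-points-isolated} (singular points are isolated on such lines), and orthogonality of $T_xV$ to $\widehat{P}_x$ at an \emph{arbitrary} $x\in\mathcal{B}_{\varepsilon}(0)\cap V$ follows by choosing $k$ such lines $l_1,\dots,l_k$ ending at $x$ with $\{l_i'(1)\}$ a basis of $T_xV$: each $l_i'(1)$ is orthogonal to $\widehat{P}_x$ by the SRF property. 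Your replacement for the density step (the singular set is a cone, plus a stratification dimension count, plus the unproved claim that $V$ lies in the horizontal distribution of any singular stratum it meets) does not rule out that the singular set has interior in $V$; and your replacement for orthogonality ("propagates from $\hat{\sigma}$ by homothety invariance and continuity") only reaches the closure of the cone over $\hat{\sigma}$, which is in general a proper subset of $V\cap\mathcal{B}_{\varepsilon}(0)$ — homotheties move points only along rays through the origin.

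There is a second genuine gap in your derivation of (c) and (d): you propagate $V_p=V$ over the regular set of $D_{\varepsilon}(q)$ "by connectedness of the regular stratum of $\F$ inside $D_{\varepsilon}(q)$", but that set is typically disconnected — it is the complement of the walls, a union of open chambers — so local constancy of $p\mapsto V_p$ does not give global constancy. The correct logical order is the paper's: orthogonality of $D_{\varepsilon}(q)$ to all leaves, i.e.\ (c), comes first (from (a) and Theorem \ref{teoBoualem}, transporting orthogonality back through $\exp_q$), and only then does one get $T_pD_{\varepsilon}(q)=\nu_pL_p$ at each regular $p$ by dimension count, so that $D_{\varepsilon}(q)$ coincides near $p$ with the totally geodesic local section; density of regular points, part (b), then gives (d) by continuity of the second fundamental form. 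Your limit argument for orthogonality at singular points is fine as a supplement but is not needed once (c) is obtained this way.
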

  The fact that the set of regular points of $\widehat{\F}(q)$ in $\mathcal{B}_{\varepsilon}(0)\cap V$ is open and dense follows directly from Proposition \ref{singular-points-isolated}. To prove that $\mathcal{B}_{\varepsilon}(0)\cap V $ is orthogonal to the plaque $\widehat{P}_{x}$ at $x\in \mathcal{B}_{\varepsilon}(0)\cap V,$ define
straight lines $l_{1},\ldots,l_{k}:[0,1]\rightarrow V$ such that
\begin{enumerate}
\item $l_{i}(0)\in\hat{\sigma},$ where $\hat{\sigma}$ is a regular section contained in $V$,
\item $l_{i}(1)=x,$
\item $\{l_{i}^{'}(1)\}$ is a basis of $T_{x}V.$
\end{enumerate}
Since $\widehat{\sigma}$ is a regular section and $\widehat{\F}$ is a singular Riemannian foliation,  the straight lines $l_{i}$ are orthogonal to the plaque $\widehat{P}_{x}$. This concludes the proof of (a). Item (b) follows directly from (a). Item (a) and  Theorem \ref{teoBoualem} imply (c). Item (d) follows from (c) and from the fact that regular points are dense in $D_{\epsilon}(q)$ and this finishes the proof of the lemma.

Now let $B_{\delta}(q)$ be a normal ball of radius $\delta$ in $M$ centered at $q.$ Note that $\delta$  can be greater than $\varepsilon$, the radius  of the  slice $S_{q}$, which depends on the leaf $L_{q}.$ Using the above lemma and the fact that $\exp_{q}$ is a diffeomorphism on $\mathcal{B}_{\delta}(0)$
we can replace the radius $\epsilon$ of $D_{\epsilon}(q)$  with the radius $\delta$ that does no longer depend on the leaves of $\F$. More precisely, one can prove the next lemma.
\begin{lemma}
\label{cor-discos-geodesicos}
 Consider the disc $D_{\delta}(q)=\{\exp_{q}(v)\mid v\in V\ \mbox{and}\ \|v\|<\delta\}$. Then
\begin{enumerate}
\item[(a)]the set of regular points of $\F$ in $D_{\delta}(q)$ is open and dense.
\item[(b)]The submanifold $D_{\delta}(q)$ is orthogonal to the leaves of $\F.$
\item[(c)] $D_{\delta}(q)$ is a totally geodesic submanifold.
\end{enumerate}
\end{lemma}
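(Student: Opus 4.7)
The plan is a bootstrap argument extending the local conclusions of Lemma \ref{lemmaslice} from the slice-dependent radius $\epsilon$ up to the injectivity-bounded radius $\delta$. Since $\exp_q$ is a diffeomorphism on $\mathcal B_\delta(0)$ and $V$ is a linear subspace of $T_q M$, $D_\delta(q)$ is a smoothly embedded submanifold of dimension $\dim V$ extending $D_\epsilon(q)$, on which properties (a)--(c) are already known.

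The main step is (c). Set
\[
r^* := \sup\{\, r \in (0,\delta] \mid D_r(q)\ \text{is totally geodesic in } M \,\}.
\]
By Lemma \ref{lemmaslice}(d) we have $r^* \geq \epsilon > 0$, and since total geodesicity is a pointwise property, $D_{r^*}(q)$ itself is totally geodesic. Assume for contradiction $r^* < \delta$; fix a unit $w \in V$ and set $p := \exp_q(r^*w)$. Applying Theorem \ref{teoBoualem} and Lemma \ref{lemmaslice} at $p$ produces a totally geodesic disc $D'(p) := \exp_p(\mathcal B_{\epsilon_p}(0)\cap V_p)$, where $V_p \subset T_p M$ is characterised, via Lemma \ref{lema1-boualem}, as the tangent space at $p$ of any nearby regular local section. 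By Lemma \ref{lemmaslice}(b) applied to $D_r(q)$ for $r < r^*$, the regular points of $\F$ are dense in $D_{r^*}(q)$; since $D_{r^*}(q)$ is totally geodesic and orthogonal to leaves on its regular part, this regular part near $p$ is an open subset of an $\F^\perp$-leaf. Consequently $T_p D_{r^*}(q) \subseteq V_p$, and a dimension count forces equality. The local uniqueness of totally geodesic submanifolds sharing a tangent at a point then forces $D'(p)$ and a neighborhood of $p$ in $D_{r^*+\eta}(q)$ to coincide for sufficiently small $\eta > 0$, so $D_{r^*+\eta}(q)$ is totally geodesic, contradicting maximality. Hence $r^* = \delta$.

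With (c) in hand, at every $p \in D_\delta(q)$ we re-apply Theorem \ref{teoBoualem} and Lemma \ref{lemmaslice} to obtain a local totally geodesic disc through $p$ which, by the same uniqueness argument, coincides locally with $D_\delta(q)$. The analogues of (a) and (b) of Lemma \ref{lemmaslice} on this local disc then transfer directly to $D_\delta(q)$. Alternatively, (a) follows from Corollary \ref{singular-points-isolated}: each radial generator $t \mapsto \exp_q(tv)$ of $D_\delta(q)$, with $v \in V \subset T_q S_q$, starts orthogonally to $L_q$ and is therefore horizontal, so its singular points are isolated, yielding density of regular points in $D_\delta(q)$.

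The principal obstacle is the identification $T_p D_{r^*}(q) = V_p$ in the bootstrap step: it bridges the intrinsic description of $D_{r^*}(q)$ as a geodesic image of $V$ with the local polar data at $p$ coming from the infinitesimal foliation $\widehat{\F}(p)$, and it relies both on density of regular points in $D_{r^*}(q)$ and on the characterisation of regular sections of $\F^\perp$ by their tangent spaces at a single point.
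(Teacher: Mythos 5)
The paper offers no proof of this lemma beyond the one-line remark that it follows from Lemma \ref{lemmaslice} together with the fact that $\exp_q$ is a diffeomorphism on $\mathcal{B}_{\delta}(0)$; your open--closed bootstrap along the radius, re-applying Theorem \ref{teoBoualem} and Lemma \ref{lemmaslice} at boundary points of $D_{r^*}(q)$ and gluing via local uniqueness of totally geodesic submanifolds with prescribed tangent space, is a legitimate realization of exactly that hint, and the crux you isolate ($T_pD_{r^*}(q)=V_p$ via the density of regular points and Lemma \ref{lema1-boualem}) is the right one.

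One step needs reordering. In the proof of (c) you invoke ``Lemma \ref{lemmaslice}(b) applied to $D_r(q)$ for $r<r^*$'' to get density of regular points, and you use orthogonality to the leaves on $D_r(q)$ to identify its regular part with open pieces of $\F^{\perp}$-leaves; but Lemma \ref{lemmaslice} is stated only for the slice-dependent radius $\epsilon$, so for $\epsilon<r<r^*$ these are precisely the statements (a) and (b) that you are in the middle of extending. As written this is circular. The repair is cheap and you already supply its main ingredient: either run the continuity argument on all three properties (a)--(c) simultaneously (so the induction hypothesis at radius $r<r^*$ includes orthogonality and density), or first establish (a) on all of $D_{\delta}(q)$ by your alternative argument --- regular directions in $V$ are dense by Lemma \ref{lemmaslice}(a) and homothety invariance of $\hat{\F}(q)$, and along each such radial geodesic, which is horizontal and meets a regular leaf, Corollary \ref{singular-points-isolated} makes the singular times isolated --- and note that (b) at radius $r$ propagates to radius $r^*$ by continuity of $T_xD(q)$ and of the leaf tangent spaces within a stratum. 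With that fix the identification $W_p=V_p$ and the gluing go through as you describe.
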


Let $r_{1}$ be a fixed positive number. Then we can find a $\delta>0$ such that $B_{\delta}(x)$ is a normal ball for each point $x$ in a neighborhood of the compact set  $D_{r_{1}}(q)$.  Let $\gamma$ be a geodesic of length $r_{1}$ such that  $\gamma'(0)\in T_{q} D_{\epsilon}(q).$

Note that we can extend $D_{\epsilon}(q)$ along the geodesic $\gamma$ by covering $\gamma$ with totally geodesic discs from the lemma. We obtain a noncomplete submanifold which satisfies the properties of Lemma \ref{cor-discos-geodesicos}.

Let $\Sigma_{r_{1}}$ denote the union of the extensions constructed above along every geodesics $\gamma$ with length $r_{1}$ such that $\gamma '(0)\in T_{q}D_{\epsilon}(q).$

Then one can show
\begin{lemma}
$\Sigma_{r_{1}}$ can be covered by a finite number of discs $D_{i}$ (of radius $\delta$) which satisfy (a), (b) and (c) of Lemma \ref{cor-discos-geodesicos}.
\end{lemma}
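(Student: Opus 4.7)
My plan is to reduce the statement to a compactness argument, building on the local discs guaranteed by Lemma \ref{cor-discos-geodesicos} applied at every point of $\Sigma_{r_1}$, not just at $q$.

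First, I would verify that $\Sigma_{r_1}$ is compact. By its very construction, it is the union of extensions of $D_{\epsilon}(q)$ along every geodesic $\gamma$ of length at most $r_1$ with $\gamma'(0)\in T_qD_{\epsilon}(q)=V$. The parameter space of such geodesics is essentially the closed ball of radius $r_1$ in $V$, which is compact, and each extension is built from a finite chain of totally geodesic discs of uniform radius $\delta$. Therefore $\Sigma_{r_1}$ is the continuous image of a compact set under a continuous map involving the exponential map and the finite chain of extensions, hence compact.

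Next, for each $x\in\Sigma_{r_1}$ I would produce a disc $D_\delta(x)$ satisfying (a), (b), (c). The idea is to apply Lemma \ref{cor-discos-geodesicos} at $x$ with the subspace $V_x\subset T_xM$ obtained by transporting $V$ along the piecewise geodesic chain from $q$ to $x$ that witnesses $x\in\Sigma_{r_1}$. The hypotheses of the earlier slice analysis are available at $x$: the linearization $\hat{\F}(x)$ is an ISRF by Theorem \ref{teoBoualem}, and $V_x$ corresponds to (the tangent space of) a regular local section of $\hat{\F}(x)$ because the extension is built exactly so that its tangent space is orthogonal to the leaves at every stage. Thus Lemma \ref{lemmaslice} and Lemma \ref{cor-discos-geodesicos} give a disc $D_\delta(x)=\exp_x(\mathcal B_\delta(0)\cap V_x)$ which is totally geodesic, orthogonal to the leaves, and in which the regular set of $\F$ is open and dense. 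The uniform choice of $\delta$ is legitimate because, as already noted in the paragraph preceding the lemma, one can fix $\delta>0$ making $B_\delta(y)$ a normal ball for every $y$ in a neighbourhood of any compact set, and here the relevant compact set is $\Sigma_{r_1}$ itself.

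Finally, since each $D_\delta(x)$ is an open totally geodesic disc whose tangent plane at $x$ equals $T_x\Sigma_{r_1}$, it contains an open neighbourhood of $x$ in $\Sigma_{r_1}$. So $\{D_\delta(x)\}_{x\in\Sigma_{r_1}}$ is an open cover of the compact set $\Sigma_{r_1}$, and a finite subcover $D_\delta(x_1),\dots,D_\delta(x_N)$ yields the desired finite family $D_i:=D_\delta(x_i)$.

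The step I expect to be the main obstacle is the verification of consistency at singular points $x\in\Sigma_{r_1}$: one must check that the subspace $V_x$ produced by the geodesic extension of $V$ really coincides with the tangent space at $x$ of a regular local section of the linearization $\hat{\F}(x)$, so that Lemma \ref{cor-discos-geodesicos} can be legitimately invoked at $x$ with this $V_x$. Once this compatibility is settled, the rest of the argument is a standard compactness-and-finite-subcover routine.
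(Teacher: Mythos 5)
The paper actually states this lemma without proof (``Then one can show''), and your covering strategy --- uniform radius $\delta$, a disc through every point of $\Sigma_{r_1}$ obtained from the slice analysis, finiteness from compactness --- is the natural reconstruction and is essentially right. Two points deserve sharpening. First, $\Sigma_{r_1}$ as defined is a union of \emph{open} totally geodesic discs, so it is only relatively compact, not compact; the finiteness should instead be extracted either from a finite $\delta/2$-net of the totally bounded set $\Sigma_{r_1}$ together with the uniform radius $\delta$, or from compactness of the sphere of directions in $V$ combined with the fact that each chain along a geodesic of length $r_1$ uses at most roughly $r_1/\delta$ discs (nearby directions being absorbed into the same discs because totally geodesic discs with a common point and common tangent plane coincide near that point). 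Second, the compatibility issue you flag at a singular $x\in\Sigma_{r_1}$ is resolved by exactly the tools already in place: the previously constructed disc $D$ containing $x$ is totally geodesic, orthogonal to the leaves, of complementary dimension, and has a dense set of regular points, so near each of its regular points it is an open subset of an $\F^{\perp}$-leaf, i.e.\ a regular local section $\sigma$ meeting $S_x$; Theorem \ref{teoBoualem} and Lemma \ref{lema1-boualem} applied at $x$ with this $\sigma$ then produce precisely the subspace $V_x$ with $d(\exp_x)V_x=T\sigma$, so that Lemma \ref{cor-discos-geodesicos} may legitimately be invoked at $x$ and the new disc agrees with $D$ on their overlap. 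With these two adjustments your argument is complete and matches the intended one.
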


The above lemma assures that $\Sigma_{r_{1}}$ is an immersed submanifold. Indeed, let $\pi:G_{k}(TM)\to M$ be the Grassmann bundle of $k$-planes of $T M$ over $M$,
$D^{\star}_{i}:=\{T_{x}D_{i}\in G_{k}(TM)\mid x\in D_{i}\}$  and
$\Sigma^{\star}_{r_{1}}:=\{T_{x}D_{i}\in G_{k}(TM)\mid  x\in \Sigma_{r_{1}}\}.$
Clearly $D^{\star}_{i}$ is embedded. The fact that $D_{i}$ is totally geodesic implies that, if  $D^{\star}_{i}$ and  $D^{\star}_{j}$ have a common point then they coincide in a neighborhood of this point. Since $\Sigma^{\star}_{r_{1}}$  can be covered by a finite number of $D^{\star}_{i},$  we see that     $\Sigma^{\star}_{r_{1}}$  is  embedded. Therefore  $\Sigma_{r_{1}}=\pi(\Sigma_{r_{1}}^{*})$ is an immersed submanifold.

Finally set $\Sigma^{\star}:=\cup_{r_{i}}\Sigma^{\star}_{r_{i}}$ where $\{r_{n}\}$ is a sequence with $r_{n}\to \infty.$ The above discussion and Hopf and Rinow's theorem  imply that $\Sigma:=\pi(\Sigma^{\star})$ is a complete immersed totally geodesic submanifold.

\end{proof}

%%%%%%%%%

We are ready to state the \emph{slice theorem}.

\begin{theorem}[\cite{Alex2}]
\label{teo-slice-polar}
Let $\F$ be a polar foliation on a complete Riemannian manifold $M.$
Let $q$ be a singular point of $M$ and let
$S_{q}$ be a slice at $q$ of radius $\epsilon$. Then
\begin{enumerate}
\item[(a)] $S_{q}= \cup_{\sigma\in\Lambda (q)}\, \sigma$,
where $\Lambda(q)$ is the set of
local sections $\sigma$ containing $q$ such that $\dist(p,q)<\epsilon$
for each $p\in\sigma.$
 \item[(b)] $S_{x}\subset S_{q}$ for all $x\in S_{q}$.
\item[(c)] $\F|S_q$ is a polar foliation \ on $S_{q}$ with respect to the induced metric from
$M$.
\item[(d)] The infinitesimal foliation $\hat{\F}(q)$ is polar and hence isoparametric.
\end{enumerate}
\end{theorem}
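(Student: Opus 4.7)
The plan is to reduce to the infinitesimal foliation $\hat{\F}(q)$ on $T_qS_q$, establish polarity there, and transfer the resulting local sections back through $\exp_q$. The three tools to lean on are Boualem's Theorem~\ref{teoBoualem}, the integrability characterization of polarity (Theorem~\ref{teo-integra-polar}), and the slice model of Theorem~\ref{lemma-metric-in-S}.

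I would first establish (d), which is the skeleton of the whole argument. By Theorem~\ref{teo-integra-polar}, $\F$ is an ISRF. Pick a regular point $x_0$ close to $q$ and let $\sigma$ be a regular local section through $x_0$. The radial geodesic $\gamma$ from $q$ to $x_0$ is horizontal by definition of $S_q$, and at $x_0$ regularity forces $\gamma'(1)\in\nu_{x_0}L_{x_0}=T_{x_0}\Sigma_{x_0}$; total geodesy of $\Sigma_{x_0}$ then gives $\gamma\subset\Sigma_{x_0}$, so $\sigma\cap S_q\neq\emptyset$. Boualem's theorem then yields a linear subspace $V\subset T_qS_q$ with $\sigma\subset\exp_q(\mathcal{B}_\varepsilon(0)\cap V)$, and asserts that $\hat{\F}(q)$ is an ISRF with $\exp_q^{-1}(\sigma)$ as a regular local section. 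Applying Theorem~\ref{teo-integra-polar} to $\hat{\F}(q)$ gives polarity. Because $\hat{\F}(q)$ is invariant under every homothety centered at the origin (Proposition~\ref{homothetic-lemma}), its sections are affine totally geodesic submanifolds of $T_qS_q$ that are scaling-invariant, hence linear subspaces through $0$. The isoparametric conclusion of (d) falls out of this Euclidean polar structure.

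To prove (a), for each regular $y\in S_q$ the same argument produces a linear subspace $V_y\subset T_qS_q$ passing through $0$ and $\exp_q^{-1}(y)$, and the disc $D_{V_y}:=\exp_q(\mathcal{B}_\varepsilon(0)\cap V_y)$ is a local section of $\F$ through $q$ and $y$: near $y$ it coincides with the regular section $\Sigma_y$, hence is totally geodesic in $M$ and orthogonal to every leaf it meets. For singular $y\in S_q$, approximate by regular $y_n\to y$ with corresponding subspaces $V_n$; extracting a subsequence converging in the Grassmannian to a subspace $V$ yields $\exp_q^{-1}(y)\in V$, and $D_V$ remains totally geodesic and orthogonal to every leaf of $\F$ it meets by continuity, so it is a local section through $q$ containing $y$.

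For (b), given $x\in S_q$ and $v\in\nu_xL_x$ with $\|v\|<\varepsilon$, Theorem~\ref{lemma-metric-in-S}(c) gives $\nu_xL_x\subset T_xS_q$. In the infinitesimal picture the normal space of the leaf of $\hat{\F}(q)$ through $\tilde x=\exp_q^{-1}(x)$ is spanned by sections of $\hat{\F}(q)$ through $\tilde x$, each of which is a linear subspace of $T_qS_q$ and therefore contains the origin. Hence there is a section $D_V$ through both $q$ and $x$ with $v\in T_xD_V$, so $\exp_x(v)\in D_V\subset S_q$. Part (c) is then immediate: the discs $\{D_V\}$ exhibit an integrable distribution orthogonal to the leaves of $\F|S_q$, which is therefore polar by Theorem~\ref{teo-integra-polar}. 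The main obstacle is (b): one must translate the linear-subspace structure of $\hat{\F}(q)$-sections back to the ambient metric $g$, justifying that every relevant section through a singular $x\in S_q$ passes through $q$; Theorem~\ref{lemma-metric-in-S} together with Remark~\ref{rem-prop-flat-metric-in-S} handles this because the radial geodesics from $q$ and the orthogonality conditions are preserved under the metric change.
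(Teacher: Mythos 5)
Your overall route --- reduce to $\hat{\F}(q)$ via Boualem's Theorem~\ref{teoBoualem}, get polarity of the linearization from Theorem~\ref{teo-integra-polar}, and transport the linear sections back through $\exp_q$ to obtain (a)--(c) --- is the same skeleton the paper uses: its proof of (d) invokes Lemma~\ref{lemmaslice}, which is exactly the Boualem-plus-integrability package, and for the first three items it only gestures at total geodesy of sections and density of regular points. So the strategy is sound, and much of what you write fills in detail the paper omits.

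The one genuine gap is the final clause of (d). Being a polar foliation of a Euclidean space does not by itself say ``isoparametric'': you must produce, for a regular leaf $L$ of $\hat{\F}(q)$, a flat normal bundle and constant principal curvatures along parallel normal fields, and you must identify $\hat{\F}(q)$ with the foliation by parallel submanifolds of $L$. The paper gets this from Bott-equifocality: the normal connection of a regular leaf of a polar foliation coincides with the Bott connection (Theorem 5.5.12 of \cite{PalaisTerng}), so the normal bundle is flat; Theorem~\ref{thm-s.r.f.-equifocal} makes the regular leaves equifocal, a direct computation turns equifocal submanifolds of Euclidean space into isoparametric ones, and Corollary~\ref{cor-equifocality} recovers $\hat{\F}(q)$ as the parallel foliation of $L$. ``Falls out of the Euclidean polar structure'' skips all of this. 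Two smaller points. First, your justification that $D_{V_y}=\exp_q(\mathcal{B}_\varepsilon(0)\cap V_y)$ is totally geodesic (``near $y$ it coincides with $\Sigma_y$'') only controls the second fundamental form near $y$; to kill it everywhere you need the density of regular points in $D_{V_y}$ as in Lemma~\ref{lemmaslice}, so that the disc agrees with a section near a dense set of its points. Second, sections of $\hat{\F}(q)$ are linear subspaces not because an individual section is scaling-invariant --- a homothety a priori carries a section to a possibly different section --- but because every section must meet every leaf, in particular the point leaf $\{0\}$, and an affine totally geodesic subspace containing the origin is linear.
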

\begin{proof}

The main ingredients in the proof of the first three items are the fact that sections
are totally geodesics and the set of regular points in a section is open and dense in this section.
Let us now explain how item (d) can be proved.
From Lemma \ref{lemmaslice} follows that $\hat{\F}(q)$ is a polar foliation of Euclidean space. Therefore each regular leaf is equifocal, and hence by a straightforward calculation, isoparametric. We fix a regular leaf $L$. The isoparametric foliation given by the (Bott-) parallel submanifolds of $L$ and thus coincides with $\hat\F_q$ by Theorem \ref{thm-s.r.f.-equifocal}.

%From LIXO \ref{Lixo-Botteq} the regular leaves are equifocal. Therefore, straightforward calculation are  isoparametric submanifolds in the spheres. It is well %known that  isoparametric submanifolds of spheres are isoparametric submanifolds of $\RR^m$. The singular leaves of $\hat{\F}(p)$ are the focal submanifolds of %the partition and this concludes the proof of the theorm.

\end{proof}

%%%%%%%%%%%%%%%%%%%%%%%%%%%%%%%%%%%%%%%%%%%%%%%

\subsection{Weyl group}
\label{subsection-weylgroup}

As explained in Remark \ref{rem:EquifocalWeyl} due to Bott-equifocality it is possible to define a map $\varphi_{[\beta]}: \sigma_{\beta(0)}\rightarrow \sigma_{\beta(1)}$
such that $\varphi_{[\beta]}(x)\in L_{x}$, where $\sigma_{\beta(i)}$ is an open set of a section possibly containing  singular points (recall Definition \ref{definition-holonomy-map}).
For a given section $\Sigma$ the \emph{Weyl pseudogroup} of $\Sigma$ is defined as
as the pseudogroup generated by the local isometries $\varphi_{[\beta]}$ such that $\beta(0),\beta(1)\in \Sigma$. By the appropriate choice of a section $\Sigma$, this pseudogroup turns out to be a group of (global) isometries of $\Sigma$ that is called the \emph{Weyl group} $W(\Sigma)$; see T\"{o}ben \cite{Toeben}.
From now on we fix such a section $\Sigma$.

By definition, if $w\in W(\Sigma)$ then $w(x)\in L_{x}$ and $W(\Sigma)$ describes how the leaves of $\F$ intersect the section $\Sigma$. As expected, when $\F$ is  the partition of a Riemannian manifold $M$ into the orbits of a polar action $G\times M\to M$, the Weyl group $W(\Sigma)$ is the usual Weyl group $N/Z$, where $N=\{g\in G | \, g\cdot x\in \Sigma, \forall x\in \Sigma \}$ and $Z=\{g\in G | \, g\cdot x=x, \forall x\in \Sigma \}$; see Palais and Terng \cite{PalaisTerng}.

Using the slice theorem (Theorem \ref{teo-slice-polar}) it is possible to prove that
the intersection of the singular leaves of $\F$ with the section $\Sigma$ is a union of totally geodesic hypersurfaces, called \emph{walls}, and  the reflections in the walls are elements of the Weyl group $W(\Sigma)$; see \cite{Alex2}.
We  define  $\Gamma(\Sigma)$  as the group generated by these reflections.

It is natural to ask when $\Gamma(\Sigma)=W(\Sigma).$ As suggested by classical examples, this question is related to the question when the holonomy of the regular leaves are trivial.  These problems were studied in \cite{AlexToeben2,Alex5,Lytchak,Alex7}. More precisely in \cite{AlexToeben2,Alex5}  it was proved that if the leaves of $\F$ are closed embedded and if  $M$ is simply connected, then the regular leaves have trivial holonomy and $M/\F=\Sigma/W(\Sigma)$ is a simply connected Coxeter orbifold. In \cite{Lytchak} Lytchak proved that if $M$ is simply connected, then the leaves are closed embedded. Concise alternative proofs of these results can be found in \cite{Alex7}, in which a surjective homomorphism $\pi_{1}(M)\to W(\Sigma)/\Gamma(\Sigma)$ was constructed. In addition we have that $M/\F=\Sigma/W(\Sigma)$ is a simply connected good Coxeter orbifold $\widetilde\Sigma/\Gamma$ where $\widetilde\Sigma$ is the Riemannian universal cover of the section $\Sigma$  and $\Gamma$ is a reflection group on $\widetilde\Sigma$. Moreover  $W(\Sigma)=\Gamma/\pi_{1}(\Sigma);$ see \cite{Alex7}.

%%%%%%%%%%%%%%%%%%%
%%%%%%%%%%%%%%%%%%%%%%%%%%%%%%%%%%%%%%%%%%%%%%%%%%%

\section{Variationally complete foliations}\label{sec:VC}

%\subsection{Definitions}

Let $G$ be a Lie group acting by isometries on a complete Riemannian manifold $M$. We say a Jacobi field is \emph{$G$-transversal} if it is obtained as an infinitesimal variation of a family of geodesics all normal to the orbits. If every $G$-transversal Jacobi field that is tangential to orbits at two distinct points is the (restriction of) a Killing field induced by the $G$-action, we say the action is \emph{variationally complete}.

The concept of variational completeness can be generalised to the setting of singular foliations. Let $L$ be an immersed submanifold of $M$. An \emph{
$L$-Jacobi field} is a Jacobi field that is the infinitesimal variation of a family of geodesics all normal to $L$.

\begin{definition}
A SRF $\F$ is said to be \emph{without horizontally conjugate points} or simply \emph{variationally complete} (VC) if for every leaf $L$ every $L$-Jacobi field tangential to a leaf other than $L$ is vertical.
\end{definition}
It is straighforward to check the following.

\begin{proposition}[\cite{LytchakThorbergsson1}]
The foliation given by a partition  of $M$ into the orbtis of a  $G$-action is VC if, and only if, the action is variationally complete.
\end{proposition}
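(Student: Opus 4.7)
The plan is to verify the two implications separately, exploiting three basic facts. First, every $L$-Jacobi field is a $G$-transversal Jacobi field: a variation of geodesics normal to $L$ starting from $L$ consists, by the SRF property that horizontal geodesics remain horizontal, of geodesics normal to every orbit they meet. Second, if $X$ is a Killing field induced by $G$, then its restriction $X|_\gamma$ to any horizontal geodesic $\gamma$ is a Jacobi field that is vertical, because the flow of $X$ permutes $G$-orbits. Third, conversely, along a horizontal geodesic every vertical Jacobi field is the restriction of a Killing field induced by $G$; this is standard in the theory of isometric actions and is obtained by a dimension count on the principal stratum together with a continuity/density argument.

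For the forward implication $(\Rightarrow)$, assume the action is variationally complete, and let $J$ be an $L$-Jacobi field along a horizontal geodesic $\gamma$ with $\gamma(0)\in L$ which is tangent to a leaf different from $L$ at some $t_1\neq 0$. Because $J(0)\in T_{\gamma(0)} L$ by construction, $J$ is tangent to orbits at the two distinct times $0$ and $t_1$. By the first fact $J$ is $G$-transversal, so the action VC hypothesis gives $J=X|_\gamma$ for some Killing field $X$ induced by $G$, and the second fact then makes $J$ vertical.

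For the converse $(\Leftarrow)$, assume the SRF is VC and let $J$ be a $G$-transversal Jacobi field along a horizontal geodesic $\gamma$, tangent to orbits at two distinct parameters which, after reparametrising, I take to be $0$ and $t_2$; thus $J(0)\in T_{\gamma(0)} L$ with $L:=L_{\gamma(0)}$. Because $T_{\gamma(0)} L$ is spanned by evaluations at $\gamma(0)$ of Killing fields induced by $G$, I pick such an $X$ with $X(\gamma(0))=J(0)$ and set $\tilde J:=J-X|_\gamma$. Then $\tilde J(0)=0$, and since both $J(t_2)$ and $X(\gamma(t_2))$ lie in $T_{\gamma(t_2)}L_{\gamma(t_2)}$, so does $\tilde J(t_2)$. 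Differentiating at $s=0$ the identity $\langle \xi(s),Y(c(s))\rangle=0$ (valid on any variation $f(s,t)=\exp_{c(s)}(t\xi(s))$ of horizontal geodesics realising $J$ and any Killing field $Y$) shows that the tangential-to-$L$ component of $J'(0)$ coincides with that of $\nabla_{\gamma'(0)}X$, so $\tilde J'(0)\in \nu_{\gamma(0)} L$. This is precisely the compatibility needed to realise $\tilde J$ as an $L$-Jacobi field, via a variation $\exp_{\gamma(0)}(t\eta(s))$ with $\eta(s)\in \nu_{\gamma(0)} L$. Applying the SRF VC hypothesis, $\tilde J$ is vertical; hence so is $J=X|_\gamma+\tilde J$, and the third fact above produces a Killing field whose restriction to $\gamma$ equals $J$.

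The main obstacle is the compatibility identity $\tilde J'(0)\in \nu_{\gamma(0)} L$: a generic $G$-transversal $J$ with $J(0)\in TL$ need not itself be an $L$-Jacobi field, because its first derivative at $0$ may have a shape-operator-driven tangential component coming from the fact that the starting curve of the variation is not constrained to lie in $L$. The correction $X|_\gamma$ must exactly absorb this obstruction, which is where the algebraic structure of the $G$-action (namely that $TL$ is spanned by Killing evaluations and the tangential derivative matches $\nabla_{\gamma'(0)}X$) enters in an essential way. A secondary but standard delicate point is the final upgrade from vertical to Killing-induced.
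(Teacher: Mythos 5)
The paper offers no proof of this proposition (it is dismissed with ``It is straightforward to check''), so there is nothing to compare line by line; your argument is the natural one, and both implications are structured correctly. The forward direction is fine: an $L$-Jacobi field is automatically $G$-transversal because geodesics normal to one orbit are normal to all orbits, and an induced Killing field is everywhere tangent to orbits. In the converse, the decomposition $J=X|_\gamma+\tilde J$ with $X(\gamma(0))=J(0)$ is the right move, and your compatibility computation is correct: differentiating $\langle\xi(s),Y(c(s))\rangle=0$ gives $\langle J'(0),Y(p)\rangle=\langle\nabla_{\gamma'(0)}Y,X(p)\rangle$, and the Killing equations for $X$ and $Y$ reduce $\langle J'(0)-\nabla_{\gamma'(0)}X,\,Y(p)\rangle$ to $-\langle[X,Y](p),\gamma'(0)\rangle=0$ since $[X,Y]$ is again induced by $G$ and hence tangent to the orbit. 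So $\tilde J(0)=0$, $\tilde J'(0)\in\nu_pL$, and $\tilde J$ is an honest $L$-Jacobi field.

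The one genuine flaw is your third ``basic fact'': it is false that every \emph{vertical} Jacobi field along a horizontal geodesic is the restriction of an induced Killing field. Take the flat cylinder $S^1\times\mathbb{R}$ with $S^1$ rotating the first factor and $\gamma(t)=(\theta_0,t)$: the field $J(t)=t\,\partial_\theta$ is Jacobi and everywhere tangent to the orbits, but the induced Killing fields restrict to the constant multiples of $\partial_\theta$ only. In particular the ``dimension count'' you invoke does not prove your statement, because the space of all vertical Jacobi fields is in general strictly larger than the space of Killing restrictions. What saves your proof is that the $J$ to which you apply the fact is, by hypothesis, $G$-transversal, and the correct statement is: \emph{every vertical $G$-transversal Jacobi field is the restriction of an induced Killing field}. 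This does follow from a dimension count on the regular stratum: $G$-transversal Jacobi fields along $\gamma$ correspond to tangent vectors of the $(n+q)$-dimensional manifold of horizontal geodesics ($q=\operatorname{codim}\F$), they project onto the $2q$-dimensional space of Jacobi fields of the local quotient, and the kernel of this projection --- which is exactly the space of vertical $G$-transversal fields --- therefore has dimension $n-q=\dim L_p$; since the restrictions $X_v|_\gamma$, $v\in\mathfrak g$, already form a subspace of that dimension (evaluation at a regular point is onto $T_pL_p$ and injective on vertical $G$-transversal fields), the two spaces coincide. (Note that $t\,\partial_\theta$ on the cylinder is not $G$-transversal, consistent with this.) With the fact restated in this form your proof is complete.
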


Bott originally introduced variational completeness in \cite{Bott56}, and with Samelson in \cite{Bott58}. In the latter paper authors wrote ``intuitively, we like to think of variational completeness as absence of conjugate points on the decomposition space'' (p. 965). This can be better understood by saying that VC is a transversal condition, i.e., it can be defined in terms of the quotient.

\begin{example} \label{exemplo-vc}
Let $\F$ be the foliation on $M$ given by the fibers of a Riemannian submersion $\pi : M \to B$. Then $\F$ is VC if and only if $B$ has no conjugate points.
\end{example}

%\begin{theorem}[\cite{LytchakThorbergsson2}]
%A closed foliation $\F$ is VC if, and only if, the leaf space is a good Riemannian orbifold without conjugate points.
%\end{theorem}
%In Subsection \ref{subsection-quocientspace} we will see the quotient is  a Riemannian orbifold.

%\subsection{Polarity}

The relation between variational completeness and polarity was explored (from the viewpoint of $G$-actions) by Conlon \cite{Conlon71,Conlon72} who showed hyperpolar actions of compact groups are variationally complete actions. The converse for actions on symmetric spaces was established by Gorodski and Thorbergsson \cite{ClaudioThor}. For foliations we have the following converse due to Lytchak and Thorbergsson \cite{LytchakThorbergsson1}.

\begin{theorem}[\cite{LytchakThorbergsson1}] \label{teoBvc}
A VC foliation on a complete manifold $M$ with non-negative sectional curvature is hyperpolar.
\end{theorem}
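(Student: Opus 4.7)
The plan is to first establish polarity and then to prove that sections must be flat. The strategy combines infinitesimal polarity (available for any VC foliation) with the non-negative curvature hypothesis in two distinct ways.

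For polarity, I would invoke Theorem \ref{teovc} to conclude that each infinitesimal foliation $\hat{\F}(q)$ is polar; in particular, by the slice analysis (Theorems \ref{lemma-metric-in-S} and \ref{teo-slice-polar}) the horizontal distribution is locally integrable at every point. To globalize this to integrability of the horizontal distribution on the whole regular stratum, I would use Bott-equifocality (Theorem \ref{thm-s.r.f.-equifocal}): basic normal fields along a regular leaf $L_p$ exponentiate to pieces of leaves, so the horizontal exponential image $\exp_p(\nu_p L_p)$ is well defined along any horizontal geodesic. Non-negative curvature enters here via Rauch comparison, which together with the absence of horizontal conjugate points (established in the next paragraph) implies that $\exp_p|_{\nu_p L_p}$ is a local diffeomorphism onto a totally geodesic, leaf-orthogonal submanifold. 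Integrability of the horizontal distribution on the regular stratum follows, and Theorem \ref{teo-integra-polar} then yields that $\F$ is polar.

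For the flatness of sections, fix a section $\Sigma$, a regular point $p\in\Sigma$, and a geodesic $\gamma\subset\Sigma$ with $\gamma(0)=p$. Any Jacobi field $J$ along $\gamma$ tangent to $\Sigma$ with $J(0)=0$ satisfies $J'(0)\in T_p\Sigma=\nu_p L_p$, so $J$ is an $L_p$-Jacobi field. If $J(t_0)=0$ at some $t_0>0$, then $J(t_0)$ is trivially tangent to the leaf $L_{\gamma(t_0)}$, so by variational completeness $J$ must be vertical; but $J\subset T\Sigma$ is horizontal at regular points, forcing $J\equiv 0$. Hence $\Sigma$ has no conjugate points. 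Since $\Sigma$ is totally geodesic in $M$, the Gauss equation together with $K_M\geq 0$ gives $K_\Sigma\geq 0$, and $\Sigma$ is complete. A classical theorem (Green: non-negative Ricci curvature plus no conjugate points implies Ricci-flatness, which combined with $K_\Sigma\geq 0$ implies $K_\Sigma\equiv 0$) concludes that $\Sigma$ is flat and $\F$ is hyperpolar.

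The main obstacle is the first step, where infinitesimal polarity must be upgraded to global polarity. Infinitesimal polarity alone does not in general imply polarity, so the non-negative curvature hypothesis is essential here: the subtle point is to patch together the locally integrable horizontal distributions coming from the polar infinitesimal foliations into a globally integrable one on the regular stratum. This patching is controlled by Bott-equifocality combined with the rigid behavior of horizontal Jacobi fields guaranteed by non-negative curvature and VC; in particular, Wilking's transverse distribution from Remark \ref{remark-wilking-distribution} remains horizontal at generic times and can be used to identify the horizontal tangent space of the candidate section along $\gamma$.
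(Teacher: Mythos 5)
The second half of your proposal (flatness of sections) is essentially sound: a Jacobi field tangent to a totally geodesic section $\Sigma$ vanishing at two points is an $L_p$-Jacobi field that VC forces to be vertical, hence zero, so $\Sigma$ has no conjugate points; together with $K_\Sigma\geq 0$ (Gauss equation) and completeness this forces flatness via the standard long-interval index-form argument. That is, up to packaging, exactly how the paper concludes hyperpolarity once polarity is known, so no complaint there (though the attribution to Green's Ricci-integral theorem is not quite the right citation --- the clean argument is the index form, not an averaged Ricci inequality).

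The first half, however, has a genuine gap. Your route to polarity rests on the claim that infinitesimal polarity of $\hat{\F}(q)$ gives local integrability of the horizontal distribution of $\F$, which you then want to patch globally. This claim is false: at a \emph{regular} point the infinitesimal foliation is the trivial foliation of $\nu_pL_p$ by points, which is vacuously polar, so infinitesimal polarity carries no information whatsoever about the horizontal distribution on the regular stratum. The Hopf fibration $S^3\to S^2$ is infinitesimally polar (its local quotients are manifolds, cf.\ Theorem \ref{thm-infpol}) yet its horizontal distribution is a contact structure, nowhere integrable. Consequently there are no ``locally integrable horizontal distributions coming from the polar infinitesimal foliations'' to patch, and the subsequent appeal to Bott-equifocality, Rauch comparison and Wilking's distribution does not produce an argument that the O'Neill tensor vanishes. (There is also a circularity hazard: Theorem \ref{teovc} is proved in this paper by the very method of Theorem \ref{teoBvc}.) The missing idea --- which is the heart of the paper's proof --- is to run your conjugate-point mechanism \emph{transversally} rather than inside a section: extend the O'Neill tensor to a tensor $A_t$ along a horizontal geodesic $\gamma$ \`a la Wilking; if $A_0v_0\neq 0$ at a regular point, O'Neill's formula \eqref{neillkg} makes the transverse curvature operator $\mathcal{R}(t)$ of \eqref{def-Rcal} non-negative everywhere (using $K_M\geq 0$) and strictly positive at $t=0$; the index form $I_a^b$ on a sufficiently long interval then becomes negative on a suitable variation, yielding an $L_{\gamma(a)}$-Jacobi field with nontrivial horizontal part vanishing at two points, i.e.\ horizontally conjugate points, contradicting VC. Thus $A\equiv 0$, the horizontal distribution is integrable, and Theorem \ref{teo-integra-polar} gives polarity. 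You should replace your first paragraph by this argument; your second paragraph is then the special case $A\equiv 0$ of the same computation.
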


Before proceeding with a sketch of the proof of the above theorem, we recall some facts about an invariant of a Riemannian submersion $\pi : M \to B$. In \cite{oneill}, O'Neill introduced a horizontal tensor \begin{equation} \label{Aneill} A_X Y := \big( \nabla_{X^h} Y^h \big)^{v} + \big( \nabla_{X^h} Y^v \big)^{h},\end{equation} where superscripts $h$ and $v$ denote the projection on the horizontal and vertical spaces of the submersion, respectively.
It can be shown that
\begin{equation} \label{Aneint} A_X Y = \frac{1}{2}[X,Y]^v \end{equation} for horizontal vector fields $X, Y$. We see that $A \equiv 0$ for horizontal fields if and only if the submersion is integrable. Moreover, the sectional curvatures of $K_B$ and $K_M$ of $B$ and $M$, respectively, are related by
\begin{eqnarray}
K_B\big( d\pi_p (\sigma_p)\big) & = & K_M(\sigma_p) + 3 ||A_X Y||^2 \label{3eq1} \\
																& = & \label{neillkg}\metric\big((R(X,Y)X)^h - 3 A_X A_X Y ,Y\big),				
\end{eqnarray}
where the $2$-plane $\sigma_p \subset T_pM$ is spanned by $X$ and $Y$.
%\subsection{Sketch of proof for Theorem \ref{teoBvc}}
\begin{proof}
Let $\gamma:\RR\to M$ be a horizontal geodesic through a regular point. As remarked before, the singular points on $\gamma$ are isolated. Following Wilking \cite{WilkGAFA} (see Remark \ref{remark-wilking-distribution}) it is possible to extend the horizontal distribution $H$ and the vertical distribution $H^{\perp}$ over the singular points to distributions along $\gamma$. Moreover, the O'Neill tensor from \eqref{Aneill} can be extended from the regular set to a horizontal tensor along $\gamma$
\begin{equation} \label{eq-At} A_t (X(t)) := \big( (X^h)'(t) \big)^v + \big( (X^v)'(t) \big)^h .\end{equation}
Assume now that the horizontal distribution on the regular stratum is not integrable. We choose $\gamma$ such that $\gamma(0)$ is regular and $A_0v_0\neq 0$ for some $v_0$.
A vector field $Y$ along $\gamma$ is the horizontal component of a Jacobi field along $\gamma$ if \begin{equation} \label{eqJacWil} \nabla_t^h \nabla_t^h Y + (R(\gamma',Y)\gamma')^h - 3 A_t A_t Y = 0,\end{equation} where $\nabla_t^h$ is the induced connection on the horizontal bundle. Note the last two terms on the left-hand side are analogous to O'Neill's equation \eqref{neillkg}.

Choose $X_1,\cdots,X_l$ orthonormal, $\nabla_t^h$-parallel fields that together with $\gamma'(t)$ form a basis of the horizontal space at $\gamma(t)$. For a curve $v(t)=(v_1(t),\ldots,v_l(t))$ in $\RR^l$ we obtain a horizontal field $\Phi(v) := \sum v_iX_i$ along $\gamma$. Clearly, $\Phi$ is a bijection of curves in $\RR^l$ and the horizontal fields perpendicular to $\gamma$. For a curve $v$ in $\RR^l$ we define
\begin{equation} \label{def-Rcal} \mathcal{R}(t)v(t):= \Phi^{-1}\big(R(\gamma'(t), Y(t)) \gamma'(t))^h - 3 A_t A_t Y(t) \big),
\end{equation}
where $Y := \Phi\circ v$.

Suppose $M$ has non-negative curvature. Then \begin{equation} \label{eqmal2} \ip{\mathcal{R}(t)v(t)}{v(t)} \geq 0\end{equation}
for all $v$.
Like in the Morse-Index Theorem for geodesics, the self-adjoint operator $\mathcal{R}$ can be used to define an index form $I_a^b$ over a set of curves in $\RR^l$ as follows:
$$I_{a}^{b}(\alpha,\beta):=\int_{a}^{b} \langle \alpha'(t), \beta'(t)\rangle
-\langle \mathcal{R}(t)\alpha(t),\beta(t)\rangle d t$$

One can prove that $I_a^b$ has positive index for some \emph{large} interval $[a,b]$. In fact, since $A_0v_0\neq0$ and therefore $\ip{\mathcal{R}(0)v_0}{v_0}>0$  we find a curve $v$ with $v(0)=v_0$ and sufficiently small energy $\int_a^b\|v'\|^2$ on a large interval $[a,b]$ such that $I_a^b(v,v)<0$. Thus one can conclude the existence of a nontrivial solution $\alpha$ to $\alpha'' + \mathcal{R}\alpha = 0$ and it can be seen that $Y=\Phi\circ\alpha$ is the horizontal component of an $L_{\gamma(a)}$-Jacobi field with $J(a)=J(c)=0$ for some $c \in (a,b)$. This implies the existence of a horizontal geodesic with horizontally conjugate points, contradiction.

In other words, a VC foliation on a complete manifold $M$ with non-negative curvature is a ISRF, and therefore polar by Theorem \ref{teo-integra-polar}. To show hyperpolarity assume that there is a nonflat section. Then inequation \eqref{eqmal2} is valid and we can proceed as before.
\end{proof}
%%%%%%%%%%%%%%%%%%%%%%%%%%%%%%%%%%%%%%%%%%%%%%%%%%%%%%%%%%%%% SECTION

%%%%%%%%%%%%%%%%%%%%%%%%%%%%%%%%%%%%%%%%%%%%%%%%%%%%%55

\section{Infinitesimally polar foliations}\label{sec:infpolar}

%\subsection{Characterisation}

In this section we briefly discuss the work of Thorbergsson and Lytchak \cite{LytchakThorbergsson2} about infinitesimally polar foliations.

%The infinitesimal behavior of polar foliations (Theorem \ref{teo-slice-polar}) turns out to be related to the structure of the local quotient.

\begin{definition}
A singular Riemannian foliation is called {\emph infinitesimally polar} if, for each $p\in M,$ the infinitesimal foliation $\hat{\F}(p)$ is polar.
\end{definition}

Let $\mathcal M_r$ be the regular stratum of a SRF $\F$ on a complete Riemannian manifold $M.$
For $z \in \mathcal M_r$ let $\overline{k}(z)$ be the supremum of sectional curvatures at the projection of $z$ on the leaf space.

\begin{theorem}[\cite{LytchakThorbergsson2}] \label{thm-infpol}
The following are equivalent:
\begin{itemize}
\item[(i)] the infinitesimal foliation $\hat{\F}(p)$ of $\F$ at $p$  is polar;
\item[(ii)] $\lim\sup_{z \to p} \overline{k}(z) < \infty \, , \, z \in \mathcal M_r$;
\item[(iii)] there is a neighborhood $Q$ of $p$ in $M$ such that $\F|_{Q}$ is closed and $Q/\F$ is a Riemannian orbifold.
\end{itemize}
\end{theorem}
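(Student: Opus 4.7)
The plan is to establish the cycle (i) $\Rightarrow$ (iii) $\Rightarrow$ (ii) $\Rightarrow$ (i).

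For (i) $\Rightarrow$ (iii), I would apply Theorem \ref{lemma-metric-in-S} to identify $(S_{p}, g_{0})$ isometrically with $(T_{p}S_{p}, g_{p})$ via $\exp_{p}$, so that $\F|_{S_{p}}$ corresponds to the infinitesimal foliation $\hat{\F}(p)$. Since $\hat{\F}(p)$ is polar by hypothesis, Theorem \ref{teo-slice-polar}(d) makes it an isoparametric foliation on Euclidean space, and the Weyl group theory reviewed in Subsection \ref{subsection-weylgroup} identifies its leaf space with a Coxeter orbifold $\Sigma/W$ for a flat section $\Sigma$ and Weyl group $W$. Setting $Q := \tub(P_{p})$, the projection $\pi : Q \to P_{p}$ induces a homeomorphism $Q/\F \cong S_{p}/\F|_{S_{p}}$, giving $Q/\F$ the structure of a Riemannian orbifold; the leaves of $\F|_{Q}$ are closed since the leaves of an isoparametric foliation are closed in the ball. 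For (iii) $\Rightarrow$ (ii), the sectional curvature of a Riemannian orbifold is, in an orbifold chart, the pullback of a smooth function under a finite branched covering, hence locally bounded. So $\limsup_{z \to p} \overline{k}(z) < \infty$.

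The crucial implication is (ii) $\Rightarrow$ (i), which I would attack by combining two facts about the infinitesimal foliation. First, by Proposition \ref{homothetic-lemma}, $\hat{\F}(p)$ is invariant under every homothety $h_{\lambda}(v) = \lambda v$ of $T_{p}S_{p}$; since $h_{\lambda}$ is a similarity with factor $\lambda$ on Euclidean space, it descends to a similarity of the quotient $T_{p}S_{p}/\hat{\F}(p)$, so sectional curvatures at regular points scale as $K(\lambda \bar{w}) = \lambda^{-2} K(\bar{w})$. Thus if $K$ is bounded on any neighborhood of $\bar{0}$, letting $\lambda \to 0^{+}$ forces $K \equiv 0$ on the entire regular stratum of $T_{p}S_{p}/\hat{\F}(p)$. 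Second, O'Neill's formula \eqref{3eq1} in the flat ambient $(T_{p}S_{p}, g_{p})$ reduces to $K_{B} = 3\|A_{X}Y\|^{2}$, so $K_{B} \equiv 0$ forces the O'Neill tensor $A$ to vanish on the regular stratum, i.e., the horizontal distribution of $\hat{\F}(p)$ is integrable there. Theorem \ref{teo-integra-polar} then concludes that $\hat{\F}(p)$ is polar.

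The hard part will be making the preceding reduction rigorous: transferring hypothesis (ii), stated for $(M, g)$, to boundedness of sectional curvatures of the quotient $T_{p}S_{p}/\hat{\F}(p)$ near $\bar{0}$. The natural route is through the family of metrics $g_{\lambda}$ constructed in the proof of Theorem \ref{lemma-metric-in-S}, which satisfies $g_{\lambda} \to g_{0}$ smoothly while respecting $\F|_{S_{p}}$; by Theorem \ref{lemma-almost-product}(e), $g$ and $\tilde{g}$ induce the same transverse metric, and in normal coordinates at $p$ the metric $g_{0}$ coincides with $\tilde{g}$ to leading order on the slice. Consequently, the curvatures of $(Q/\F, \bar g)$ at the projection of points converging to $p$ and of $T_{p}S_{p}/\hat{\F}(p)$ at the projection of points converging to $\bar 0$ have the same asymptotic behaviour, so the bound from (ii) transfers to the infinitesimal quotient and the homothety argument applies.
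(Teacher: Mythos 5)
Your cycle $(i)\Rightarrow(iii)\Rightarrow(ii)\Rightarrow(i)$ matches the paper's three implications, and $(iii)\Rightarrow(ii)$ and the endgame of $(ii)\Rightarrow(i)$ (flat quotient $\Rightarrow$ $A\equiv 0$ by O'Neill $\Rightarrow$ integrable $\Rightarrow$ polar by Theorem \ref{teo-integra-polar}) are exactly the paper's. Two remarks on $(ii)\Rightarrow(i)$. First, the transfer you defer to the end is carried out in the paper by the exact identity $\overline{k_{\lambda}}(z)=\overline{\tilde{k}^{\lambda}}(h_{\lambda}(z))=\lambda^{2}\,\overline{k}(h_{\lambda}(z))$, coming from $h_{\lambda}$ being an isometry $(S_p,\metric_{\lambda})\to(h_{\lambda}(S_p),\lambda^{-2}\tilde{\metric})$ together with Theorem \ref{lemma-almost-product}(e), plus smooth convergence of the transverse metrics of $\metric_\lambda$ to that of $\metric_0$. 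Your phrase ``the curvatures have the same asymptotic behaviour'' is not right as stated: the infinitesimal quotient is exactly flat while the quotient of $(M,g)$ is merely bounded, so the two are not asymptotically equal; what is true is $\overline{k_0}(z)=\lim_\lambda\lambda^2\overline{k}(h_\lambda(z))=0$. Second, once you have that identity, flatness drops out immediately and your separate homothety-similarity scaling argument (which is correct) becomes redundant.

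The genuine gap is in $(i)\Rightarrow(iii)$. Identifying $(S_p,g_0)$ with $(T_pS_p,g_p)$ and quoting the Coxeter-orbifold structure $\Sigma/W$ of the isoparametric foliation shows that $S_p/\F$ is a Riemannian orbifold \emph{for the flat metric $g_0$}, which is not the metric the statement is about: $Q/\F$ carries the quotient metric induced by the original $g$, and the transverse metrics of $g$ and $g_0$ on $S_p$ differ. To close this you must exhibit a smooth Riemannian metric on the section $N:=\exp_p(\Sigma\cap B_\varepsilon(0))$ that induces the actual transverse metric of $g$ on the regular set and is preserved by $W$. The paper does this by setting $V_z:=(T_zN)^{\perp_{g_0}}$, $H_z:=(V_z)^{\perp_{\tilde\metric}}$, defining $\metric^N_z(\cdot,\cdot):=\tilde\metric_z(\mathrm{pr}_z\cdot,\mathrm{pr}_z\cdot)$ via the projection along the leaves, and then invoking density of regular points in $N$ together with Myers--Steenrod to conclude that $W$ acts by isometries of $(N,\metric^N)$, whence $N/W$ is isometric to $S_p/\F$. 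Without some such construction you have only shown that the quotient is an orbifold in a metric different from the one induced by $g$.
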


So foliations verifying any of the equivalent conditions for all point $p$ are infinitesimally polar. From Theorem \ref{teo-slice-polar} its follows directly that polar foliations are infinitesimally polar.

Singular Riemannian foliations of $\codim\F = 1$ are clearly (polar and) infinitesimally polar. If $\codim\F = 2$ then the foliation $\hat{\F}(p)$  is polar/isoparametric in $\nu_p L_p$ since its restriction $\hat{\F}(p)|\nu_p^1 L_p$ to the sphere is of codimension $1$ and therefore polar/isoparametric.
%\subsection{Sketch of proof for Theorem \ref{thm-infpol}}

\begin{proof}
First note that a Riemannian orbifold has locally bounded curvature, so $(iii) \Rightarrow (ii)$.

Now $(ii)$ implies that \begin{equation}\label{1imp2eq1} \lim\sup_{z \to p} \bar{k}(z) \cdot d^{2}(z,p)=0 \, , \, z \in \mathcal M_r. \end{equation}

Let $\tilde{g}$ be the metric defined in Theorem \ref{lemma-almost-product}. It is well-known that the Levi-Civita connection does not change for the conformally changed metrics ${\tilde{\metric}}^{\lambda} := \lambda^{-2}{\tilde{\metric}}$. Hence one has for the respective sectional curvatures $\tilde{k}^{\lambda} = \lambda^2 \tilde{k}$. Let $\metric_{\lambda}$ be the metric defined in the proof of Theorem \ref{lemma-metric-in-S}. So the homothety $h_{\lambda}$ is an isometry from the slice $(S_p,\metric_\lambda)$ to $(h_{\lambda}(S_p),{\tilde{\metric}}^{\lambda})$ with $h_{\lambda}(S_p) \subset S_p$ for $0<\lambda<1$. For regular $z\in S_p$,
\begin{equation}\label{1imp2eq2} \overline{k_{\lambda}}(z) = \overline{\tilde{k}^{\lambda}}(h_{\lambda}(z)) = {\lambda^2}\overline{\tilde{k}}(h_{\lambda}(z)) = {\lambda^2}\overline{k}(h_{\lambda}(z)), \end{equation}
the last equality holds due to Theorem \ref{lemma-almost-product}(e).
On the other hand one sees that the smooth convergence $\mathrm{g}_{\lambda}\to \mathrm{g}_{0}$ (see Theorem \ref{lemma-metric-in-S}) is also true for the induced transverse metrics (on a neighborhood of a regular point) thus
\begin{equation}\label{1imp2eq3} \lim_{\lambda \to 0}  \overline{k_{\lambda}}(z) = \overline{k_0}(z). \end{equation}
The right side of equation \eqref{1imp2eq2} goes to zero for $\lambda\to 0$ because of equation \eqref{1imp2eq1}. Thus, by equation \eqref{1imp2eq3}, the leaf space of the infinitesimal foliation is flat on the regular set. Therefore, both curvature terms in equation \eqref{3eq1} are zero and hence the regular horizontal distribution is integrable. By Theorem \ref{teo-integra-polar}, $\hat{\F}(p)$ is polar proving $(ii) \Rightarrow (i)$.

Finally, assume $\hat{\F}(p)$ is polar and let $\Sigma$ be a horizontal section through the origin and $W$ its associated Weyl group. Let $N:=\exp_p(\Sigma\cap B_\varepsilon(0))\subset S_p$, where $\varepsilon$ is the radius of $S_p$.
We want to construct a metric on $N$ that coincides with the transverse metric of $g$ on the regular set.
For $z \in N$ let $$V_z := (T_zN)^{\perp_{g_0}}\,; \qquad H_z := (V_z)^{\perp_{\tilde\metric}}.$$
Note that the distribution $V$ is the tangent bundle of the foliation on the regular stratum and $H$ extends the normal bundle of the foliation on the regular stratum over the singular set in $N$. Also observe that the orthogonal projection $\mathrm{pr}_z: T_{z}M \to H_z$ with respect to $\tilde\metric$ (i.e. the projection along the leaves) induces an isomorphism  $\mathrm{pr}_{z}: T_{z}N \to H_z$
 and define ${\metric}_z^N(\cdot,\cdot) := \tilde\metric_z(\mathrm{pr}_z\cdot,\mathrm{pr}_z\cdot)$ on $N$.

Now consider $N$ endowed with the metric $\metric^N$.
Since $\metric^N$ on the regular set of $N$ coincides with the tranverse metric, it is left invariant under the action of $W$. As the regular points are dense in $N$ we conclude from the theorem of Myers-Steenrod, that $W$ is a subgroup of isometries of $(N,\metric^N).$
It follows that $N/W$ and $S_p /\F$ are isometric, making the local quotient of a tubular neighborhood a Riemannian orbifold. This amounts to $(i) \Rightarrow (iii)$ and concludes the proof.

\end{proof}

%\subsection{Variational completeness implies infinitesimal polarity}

%We now show how the same change of metric used to prove Theorem \ref{thm-infpol} can be used to establish another example.

We now consider another example of infinitesimally polar foliations.

\begin{theorem}[\cite{LytchakThorbergsson2}] \label{teovc}
Variationally complete foliations are infinitesimally polar.
\end{theorem}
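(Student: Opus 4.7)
The plan is to reduce the statement to Theorem \ref{teoBvc}. Since the infinitesimal foliation $\hat{\F}(p)$ is a SRF on the Euclidean space $(T_pS_p,g_p)$, which has non-negative (in fact zero) sectional curvature, it suffices to show that $\hat{\F}(p)$ is itself variationally complete: Theorem \ref{teoBvc} will then give that $\hat{\F}(p)$ is hyperpolar, in particular polar, and this holding at every $p$ is exactly the definition of infinitesimal polarity of $\F$.

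The main task is therefore to show that the VC property descends from $(M,g,\F)$ to the infinitesimal foliation. I would proceed in two stages. First, I would argue that VC passes to the slice: by Theorem \ref{lemma-metric-in-S}(c), $\nu_x L_x\subset T_xS_p$ for every $x\in S_p$, so horizontal geodesics of $\F|_{S_p}$ are initial segments of horizontal geodesics of $\F$ in $M$, and variations through horizontal geodesics inside $S_p$ are variations through horizontal geodesics in $M$. Consequently, an $L$-Jacobi field for $\F|_{S_p}$ is also an $L$-Jacobi field for $\F$, and both verticality and tangency to distinct leaves are preserved since the leaves of $\F|_{S_p}$ are (open subsets of) intersections of leaves of $\F$ with $S_p$. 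Hence VC of $\F$ forces VC of $\F|_{S_p}$ in the induced metric.

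The second stage exploits the family $\{g_\lambda\}_{0<\lambda\leq 1}$ from the proof of Theorem \ref{lemma-metric-in-S}, which smoothly interpolates between the near-original metric at $\lambda=1$ and the flat metric $g_0$ at $\lambda=0$, under which $\exp_p:(T_pS_p,g_p)\to(S_p,g_0)$ is an isometry. The homothety $h_\lambda$ preserves the foliation (Proposition \ref{homothetic-lemma}) and realizes $(S_p,g_\lambda,\F|_{S_p})$ as a constant rescaling of $(\F|_{h_\lambda(S_p)},\tilde g)$ on a smaller tubular neighborhood. Because VC is invariant under constant rescalings of the metric — geodesics and Jacobi fields transform in a controlled way and leaves are preserved — each $(S_p,g_\lambda,\F|_{S_p})$ is VC. Letting $\lambda\to 0$ and using the smooth convergence $g_\lambda\to g_0$, one obtains VC of $(S_p,g_0,\F|_{S_p})$, which via $\exp_p$ is precisely VC of $\hat{\F}(p)$ on $T_pS_p$. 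Then Theorem \ref{teoBvc} concludes.

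The principal obstacle is the limiting step: the Jacobi equation depends on the metric nontrivially, so one must show that a hypothetical bad $\hat{L}$-Jacobi field for $\hat{\F}(p)$ — one tangent to a distinct infinitesimal leaf at some time but not vertical — would persist, under the smooth convergence $g_\lambda\to g_0$, as a bad $L$-Jacobi field for $(S_p,g_\lambda,\F|_{S_p})$ for some sufficiently small $\lambda>0$, contradicting VC at that level. Equivalently, the contradiction derived in the limit must be pulled back to a contradiction before passing to the limit. This amounts to a smooth dependence argument for solutions of the Jacobi equation with respect to the metric, combined with the fact that horizontal geodesics for $g_\lambda$ converge to horizontal straight lines for $g_0$ and that the leaves and their tangent spaces behave continuously with $\lambda$ by Proposition \ref{homothetic-lemma}. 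Once this continuity argument is in place, the proof is complete.
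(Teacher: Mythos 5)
Your second stage --- deforming the metric through $g\to\tilde g\to\tilde g^{\lambda}\to g_\lambda\to g_0$ and passing to the limit --- is exactly the mechanism of the paper's Lemma \ref{lema2-casovc}, but your overall reduction does not close. The limiting argument only controls \emph{regular} horizontal geodesics: on the regular stratum the local leaf space is a Riemannian manifold, geodesics there for $g_\lambda$ converge to those for $g_0$ with the same initial conditions, and conjugate points persist for small $\lambda>0$. It gives no control over $L$-Jacobi fields that become tangent to \emph{singular} leaves of $\hat{\F}(p)$, nor over horizontal geodesics through singular points, so it cannot deliver full variational completeness of $\hat{\F}(p)$ in the sense of the Definition. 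Consequently you cannot invoke Theorem \ref{teoBvc} as a black box: its index-form argument needs a \emph{large} interval $[a,b]$ (because the ambient curvature is merely non-negative), on which the horizontal geodesic may well meet singular leaves, so the contradiction it produces is only that \emph{some} horizontal geodesic has horizontally conjugate points --- possibly of the singular kind your limit argument does not exclude. The paper closes this gap by proving a sharpened version of the Theorem \ref{teoBvc} argument (the first, unnumbered lemma in its proof): if $\hat{\F}(p)$ is not polar, then by the curvature-explosion criterion (ii) of Theorem \ref{thm-infpol} the quotient curvature blows up along suitable paths, so the index form $I_a^b$ already has positive index on a \emph{small} interval, yielding a \emph{regular} horizontal geodesic with horizontally conjugate points; this is exactly what Lemma \ref{lema2-casovc} then contradicts. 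That use of Theorem \ref{thm-infpol} is the missing idea in your proposal.

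A secondary issue is your first stage: you treat horizontal geodesics of $\F|_{S_p}$ with the induced metric as geodesics of $M$, citing Theorem \ref{lemma-metric-in-S}(c); but that statement concerns the modified metrics, and for the original metric $g$ the slice $S_p$ is not totally geodesic, so $L$-Jacobi fields of the submanifold $(S_p,g|_{S_p})$ are not $L$-Jacobi fields of $(M,g)$. The correct route is the paper's: pass first to the adapted metric $\tilde g$ of Theorem \ref{lemma-almost-product}, which leaves the transverse metric unchanged, and use that the absence of horizontally conjugate points along regular horizontal geodesics is a transverse condition (in the spirit of Example \ref{exemplo-vc}), again restricting attention to the regular stratum throughout.
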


%First we note, since polarity implies infinitesimal polarity, Theorem \ref{teovc} can be regarded as a generalization of Theorem \ref{teoBvc}.

\begin{proof}
In order to prove Theorem \ref{teovc} it suffices to consider regular geodesics.
We call a (not necessarily complete) geodesic {\em regular} if it only contains regular points.
\begin{lemma}
If $\hat{\F}(p)$ is not polar, then there is a horizontal regular geodesic with horizontally conjugate points.
\end{lemma}

The proof of the above Lemma follows the proof of Theorem \ref{teoBvc}. Note that the curvature of the ambient space is nonnegative (even flat) as in Theorem \ref{teoBvc}. The key difference is that from Theorem \ref{thm-infpol} one knows that the curvature becomes arbitrarily large for certain paths thus ensuring that $I^b_a$ has positive index for a \emph{small} interval $(a,b)$.

\begin{lemma} \label{lema2-casovc}
Let $\F$ be VC on $(M,\metric)$. Then no horizontal regular geodesic of $\hat{\F}(p)$ has horizontally conjugate points.
\end{lemma}
Theorem \ref{teovc} follows from the two previous lemmas. We now sketch the proof of  Lemma \ref{lema2-casovc}. Consider the change of metrics $$\metric \longrightarrow \tilde{\metric} \longrightarrow \tilde{\metric}^{\lambda} \longrightarrow \metric_{\lambda} \longrightarrow \metric_0$$ on the slice $S_p$. We claim that under each change of metric the foliation restricted to the regular stratum is still variationally complete. This will conclude the proof of Lemma \ref{lema2-casovc}.

Indeed, by Theorem \ref{lemma-almost-product} the first change does not affect the transverse metric. The second change, from $\tilde{\metric}$ to $\tilde{\metric}^{\lambda}$, retains the Riemannian connection and orthogonality. The third change, from $\tilde{\metric}^{\lambda}$ to $\metric_{\lambda}$, does not affect variational completeness since $(S_p,\metric_{\lambda}) \stackrel{h_{\lambda}}{\longrightarrow} (h_{\lambda}(S_p),{\tilde{\metric}}^{\lambda})$ is an isometry. We consider the fourth change, from $\metric_{\lambda}$ to $\metric_0$. Consider the projection $\gamma_0$ of a horizontal regular geodesic with respect to $\metric_0$ to the regular leaf space. The geodesic $\gamma_0$ is the limit of a sequence $(\gamma_{\lambda})$ of geodesics in the regular leaf space with respect to $\metric_{\lambda}$ with the same initial conditions as $\gamma_0$. If $\gamma_0$ had conjugate points, so would $\gamma_\lambda$ for small $\lambda>0$, contradiction. This finishes the proof sketch of Lemma \ref{lema2-casovc}.

\end{proof}

%%%%%%%%%%%%%%%%%%%%%%555
%\subsection{The structure of $M/\F$}
%\label{subsection-quocientspace}

We conclude this section discussing the structure of $M/\F$ when $\F$ is an infinitesimally polar foliation  on
 a complete Riemannian manifold $M$.

 By definition, for each point in $M$ we can find a neighborhood $U$, called an \emph{isoparametric neighborhood}, such that $\F|_{U}$ is diffeomorphic to an isoparametric foliation. Therefore we can find a submanifold $\sigma$ transverse to the plaques that we  call a \emph{local section}.

Using the same argument as in the proof of Theorem \ref{thm-infpol}, we  induce a metric on $\sigma_{\alpha}$ such that,
for each two points $p,q\in \sigma_{\alpha}$, the distance between these points (with respect to the metric of $\sigma_{\alpha}$) is the same as between the plaques $P_{p}$ and $P_{q}$ (with respect to the metric of $M$).

Now  consider an open covering $\{U_{\alpha}\}$ of $M$ by isoparametric neighborhoods.
If $U_{\alpha}\cap U_{\beta}\neq \emptyset$, then we  find neighborhoods $V^{\alpha}_{\alpha}\subset \sigma_{\alpha}$,
$V^{\alpha}_{\beta}\subset \sigma_{\beta}$ and an isometry $\varphi_{\beta,\alpha}:V^{\alpha}_{\alpha}\cap C_{\alpha}\to V^{\alpha}_{\beta} \cap C_{\beta}$
where $C_{\alpha}$ (respectively $C_{\beta}$) is a Weyl chamber of $\sigma_{\alpha}$ (respectively $\sigma_{\beta}$).
Since $C_{\alpha},$ $C_{\beta}$ are Weyl chambers, we  extend the map $\varphi_{\alpha,\beta}$ to an isometry
$\varphi_{\beta,\alpha} :V^{\alpha}_{\alpha}\to V^{\alpha}_{\beta}$, that is  called \emph{holonomy}.

The elements $\varphi_{\beta,\alpha}$ acting on $\sigma=\amalg \sigma_{\alpha}$ generate a pseudogroup of isometries $\Phi$ of $\sigma$  that we  call the \emph{holonomy pseudogroup of} $\F$; for details about pseudogroups and orbifolds, see Salem \cite[Appendix D]{Molino}. Like in the classical theory of foliations, $(\sigma,\Phi)$ contains all  essential information about the leaf space $M/\F$.

%We say that  $M/\F$ is \emph{equivalent} to $\Sigma/W,$ where $\Sigma$ is a connected Riemannian manifold and $W$ is a subgroup of isometries of $\Sigma$, if
%there is a maximal collection $\Psi$ of isometries of open subsets of $\Sigma$ onto open subsets of $\sigma$ such that
%\begin{enumerate}
%\item The sources of the elements of $\Psi$ form an open covering of $\Sigma$. The targets of the elements of $\Psi$ form an open covering of $\sigma$.
%\item If $\psi\in \Psi$, $w\in W$ and $\varphi\in \Phi$ then $\varphi\circ\psi\circ w\in \Psi$.
%\item If $\psi_{\alpha}, \psi_{\beta}\in \Psi$, $w\in W$ and $\varphi\in \Phi$, then
%$$\psi_{\beta}\circ w\circ(\psi_{\alpha})^{-1}\in \Phi \, \mathrm{and}\, (\psi_{\alpha})^{-1}\circ \varphi\circ \psi_{\beta}\in W.$$
%\end{enumerate}

\begin{remark}
As remarked in \cite{Alex7} we can associate a Weyl group $W$ and a reflection subgroup to an infinitesimally polar foliation $\F$ when $M/\F$ is a good orbifold, i.e., $M/\F$ is equivalent to $\Sigma/W$  where $\Sigma$ is
a connected Riemannian manifold and $W$ is a subgroup of isometries of $\Sigma$.
Then $W$ is called the \emph{Weyl group} of $\F$. Let $\{\psi_{\alpha}\}$ be a maximal collection
of isometries of open subsets of $\Sigma$ onto local sections of $\F$ that gives the equivalence (in the sense of Salem mentioned above)
between $\Sigma/W$ and $M/\F$.  Let $r$  be a reflection in a wall of a local section $\sigma_{\alpha}$.
Then $\mathrm{r}:=\psi_{\alpha}^{-1}\circ r\circ\psi_{\alpha}\in W$  is called a \emph{reflection}
 on $\Sigma$ and $\Gamma$ is the group generated by these reflections.
Also, as remarked in \cite{Alex7}, the results on the Weyl group of a polar foliation discussed in Subsection \ref{subsection-weylgroup} can be generalized to infinitesimal polar foliation, when $M/\F$ is equal to $\Sigma/W.$
\end{remark}

From Theorem \ref{thm-infpol} and the discussion above we infer the next result.

\begin{proposition} \label{prop-global}
Let $\F$ be a closed SRF on a complete Riemannian manifold $M$. The leaf space $M/\F$ is a Riemannian orbifold if and only if $\F$ is infinitesimally polar.
\end{proposition}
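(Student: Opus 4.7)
The plan is to deduce both implications from Theorem~\ref{thm-infpol}, in particular from the equivalence between infinitesimal polarity at a point and the existence of a local Riemannian orbifold chart for the leaf space at that point. The only work beyond Theorem~\ref{thm-infpol} will be to globalize the local orbifold structures in the ``if'' direction, for which the holonomy pseudogroup $(\sigma,\Phi)$ described just above the proposition is tailor-made.

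For the ``only if'' direction I would argue pointwise. Given $p\in M$, its image $\bar{p}\in M/\F$ has a Riemannian orbifold neighborhood $\bar{Q}$. Since $\F$ is closed, the quotient map $\pi\colon M\to M/\F$ is continuous with Hausdorff target, and $Q:=\pi^{-1}(\bar{Q})$ is a saturated open neighborhood of $p$ whose quotient $Q/\F=\bar{Q}$ is a Riemannian orbifold. This is exactly condition (iii) of Theorem~\ref{thm-infpol}, so $\hat{\F}(p)$ is polar; since $p$ was arbitrary, $\F$ is infinitesimally polar.

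For the ``if'' direction, infinitesimal polarity together with Theorem~\ref{thm-infpol}(iii) furnishes, for every $p\in M$, an isoparametric neighborhood $U_\alpha$ with $U_\alpha/\F$ a Riemannian orbifold. I would pick a local section $\sigma_\alpha$ in each $U_\alpha$ and endow it with the Riemannian metric inherited from the transverse metric of $\F$, so that $U_\alpha/\F$ becomes isometric to the quotient of $\sigma_\alpha$ by the (finite) local Weyl group acting by reflections in the walls. On double overlaps the holonomy maps $\varphi_{\beta,\alpha}$ are local isometries and, together with these reflections, they generate the holonomy pseudogroup $\Phi$ of isometries acting on $\sigma=\amalg_\alpha\sigma_\alpha$. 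The quotient $\sigma/\Phi$ is naturally identified with $M/\F$, and carries a Riemannian orbifold structure induced from the metric on $\sigma$.

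The main obstacle I anticipate is to confirm that the pseudogroup $\Phi$ really defines a Riemannian orbifold on $\sigma/\Phi$: one must verify that its orbits are discrete in each $\sigma_\alpha$ and its isotropies finite. Both points rely on the standing hypotheses: discreteness of $\Phi$-orbits translates the closedness of the leaves of $\F$, while finiteness of isotropy follows because each infinitesimal foliation $\hat{\F}(p)$ is polar, hence isoparametric on a sphere and equipped with a finite Weyl group. Once these are in place, the transverse Riemannian metric on $\sigma$ descends to the desired Riemannian orbifold metric on $M/\F$, completing the argument.
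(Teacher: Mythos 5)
Your proposal is correct and follows essentially the same route as the paper, which derives the proposition directly from Theorem~\ref{thm-infpol} together with the holonomy pseudogroup $(\sigma,\Phi)$ built from isoparametric neighborhoods and local sections. Your elaboration of the two verifications (discreteness of $\Phi$-orbits from closedness of the leaves, finiteness of isotropy from the finite local Weyl groups of the isoparametric infinitesimal foliations) is exactly the content the paper leaves implicit.
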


At this point we are able to use the equivalent approaches to infinitesimal polarity in order to produce examples where the quotient is not an orbifold.

%Consider an isometric and effective $\mathbb{S}^1$-action on $M$. Let $\Sigma^0$ be the set of points that equal their orbits.

\begin{proposition} \label{prop-nao-exemplo}
Consider an isometric and effective $\mathbb{S}^1$-action on $M$. Assume that the fixed point set $M^{\mathbb{S}^1}$ is not empty. Then
the orbit space $M/\mathbb{S}^1$ is a Riemannian orbifold if and only if the codimension of $M^{\mathbb{S}^1}$ is $2$.
\end{proposition}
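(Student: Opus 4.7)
The plan is to invoke Proposition~\ref{prop-global}, which reduces the orbifold property of $M/\mathbb{S}^1$ to infinitesimal polarity of the orbit foliation $\F$. The latter is a closed SRF since $\mathbb{S}^1$ is compact and acts isometrically, so I verify infinitesimal polarity at each $q \in M$.

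At $q \in M \setminus M^{\mathbb{S}^1}$ the isotropy $\mathbb{S}^1_q$ is a proper closed subgroup of $\mathbb{S}^1$, hence finite. The restriction of $\F$ to a slice $S_q$ then consists of finite $\mathbb{S}^1_q$-orbits, so $\hat{\F}(q)$ is a SRF whose regular leaves are zero-dimensional; such a foliation is trivially polar, with the whole slice as a section. Infinitesimal polarity therefore holds automatically away from $M^{\mathbb{S}^1}$, and the question reduces to $q \in M^{\mathbb{S}^1}$. There $T_q M = T_q M^{\mathbb{S}^1} \oplus V$ with $V := \nu_q M^{\mathbb{S}^1}$, and $\hat{\F}(q)$ is the orbit foliation of the isotropy representation, acting trivially on the first summand and effectively on $V$ (effectivity on $V$ follows from global effectivity, connectedness of $M$ and $\exp_q$). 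Since every nontrivial irreducible real $\mathbb{S}^1$-representation is a $2$-dimensional rotation, $c := \codim M^{\mathbb{S}^1} = \dim V = 2k$ is even with $k \geq 1$.

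If $k = 1$ then effectivity forces weight $\pm 1$, so $V \cong \mathbb{R}^2$ with the standard rotation; any line through the origin in $V$, together with $T_q M^{\mathbb{S}^1}$, is a section for $\hat{\F}(q)$, which is therefore polar. Combined with the non-fixed case, $\F$ is infinitesimally polar, and Proposition~\ref{prop-global} gives that $M/\mathbb{S}^1$ is a Riemannian orbifold.

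The main obstacle is the converse for $k \geq 2$: showing $\hat{\F}(q)$ is not polar. My approach uses the equivalence $(i) \Leftrightarrow (ii)$ of Theorem~\ref{thm-infpol} applied to the SRF $\hat{\F}(q)$ on $T_q M$ at the origin. Since $\hat{\F}(q)$ is invariant under homotheties centered at the origin, its own infinitesimal foliation at $0$ is $\hat{\F}(q)$ itself, so polarity is equivalent to the sectional curvature of the leaf space $T_q M^{\mathbb{S}^1} \times (V/\mathbb{S}^1)$ staying locally bounded near $0$. The $\mathbb{S}^1$-action commutes with radial scaling on $V$, so $V/\mathbb{S}^1$ is the Riemannian cone $dr^2 + r^2 g_B$ over the base $B := S_V/\mathbb{S}^1$ of the Riemannian submersion from the unit sphere $S_V \subset V$. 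By O'Neill's formula \eqref{3eq1} we have $K_B = 1 + 3 \| A \|^2 \geq 1$; for $k \geq 2$ the $\mathbb{S}^1$-action on $V$ is not polar (by Dadok's classification, the only polar $\mathbb{S}^1$-representations consist of a single $2$-dimensional rotation plus trivial summands), hence the horizontal distribution on $S_V$ is not integrable and $\| A \|$ does not vanish identically, so $K_B > 1$ at some regular horizontal plane of $B$. The cone formula then yields tangential sectional curvatures of the form $(K_B - 1)/r^2$, which blow up as $r \to 0$. By Theorem~\ref{thm-infpol}(ii), $\hat{\F}(q)$ is not polar, so $\F$ is not infinitesimally polar, and Proposition~\ref{prop-global} implies $M/\mathbb{S}^1$ is not a Riemannian orbifold.
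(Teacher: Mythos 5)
Your proof is correct, and its skeleton coincides with the paper's: reduce via Proposition~\ref{prop-global} to infinitesimal polarity, note that only fixed points are problematic, and analyze the isotropy representation on $V=\nu_q M^{\mathbb{S}^1}$ (whose orbit foliation, up to a trivial factor, is $\hat{\F}(q)$). Where you genuinely diverge is in the hard direction, $\codim M^{\mathbb{S}^1}>2 \Rightarrow$ not polar. The paper argues structurally: inside $E=V$ the regular leaves of $\hat{\F}(q)$ are one-dimensional and $\{0\}$ is the only singular leaf, while in a polar foliation the singular leaves meet each section in a union of totally geodesic hypersurfaces (the walls of Subsection~\ref{subsection-weylgroup}); so the point $\{0\}$ must be a hypersurface of the section, forcing $\dim V=2$. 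You instead combine Dadok's classification \cite{Dadok} with the curvature-explosion criterion of Theorem~\ref{thm-infpol}, O'Neill's formula, and the cone structure of $V/\mathbb{S}^1$. Your route is quantitative -- it exhibits the failure as a $(K_B-1)/r^2$ blow-up of the quotient curvature, very much in the spirit of Section~\ref{sec:infpolar} -- while the paper's is shorter and self-contained within its own theory of polar foliations.

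One logical remark: as written, your curvature computation is redundant rather than load-bearing. The moment you quote Dadok to conclude that the $\mathbb{S}^1$-representation on $V$ is not polar for $k\geq 2$, you are finished, because non-polarity of that representation is literally non-polarity of $\hat{\F}(q)$; the subsequent chain ``not polar $\Rightarrow$ not integrable (Theorem~\ref{teo-integra-polar}) $\Rightarrow$ $A\not\equiv 0$ $\Rightarrow$ curvature explosion $\Rightarrow$ not polar'' travels in a circle. For the curvature argument to carry independent weight (and to avoid importing a classification theorem for what is really an elementary fact), you should establish $A\not\equiv 0$ for $k\geq 2$ by a direct computation of $[X,Y]^v$ at a single point of the unit sphere of $V$, as one does for the Hopf fibration, instead of deriving it from the non-polarity you are trying to prove. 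With that substitution your argument becomes a clean, self-contained alternative to the paper's.
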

\begin{proof}
To see this, consider the foliation $\F$ given by the partition into $\mathbb{S}^1$-orbits. Now consider $\hat{\F}(p)$ for a fixed point $p$. Regular $\hat{\F}(p)$-leaves, i.e., the one-dimensional orbits, lie in spheres centered at the singular leaf $\{0\}$. On the other hand, they also lie in tubes around the singular stratum $T_pM^{\mathbb{S}^1}$. This leads to regular leaves being  in an affine space parallel to $E:=(T_pM^{\mathbb{S}^1})^{\perp}$. One has that $\hat{\F}(p)$ isoparametric if and only if $\hat{\F}(p)|_{E}$ is isoparametric. Let $L$ be a regular leaf in $E$, i.e., $\dim L=1$. Since $\{0\}$ is the only singular leaf in $E$, it follows $\hat{\F}(p)|_{E}$ is isoparametric if and only if $\dim E = 2$. So we have Proposition \ref{prop-nao-exemplo} from Proposition \ref{prop-global}.
\end{proof}

\begin{example}
From Proposition \ref{prop-nao-exemplo}, the orbit space of the standard action of $\mathbb{S}^1$ on $\mathbb{C}^2$ given by $\zeta\cdot (z_1,z_2):= (\zeta z_1,\zeta z_2)$ cannot be an orbifold. %We recall the above action is not polar (see \cite{PalaisTerng}).
%In fact, it is a cone over $S^2$.
\end{example}

%%%%%%%%%%%%%%%%%%%%%%%%%%555
%%%%%%%%%%%%%%%%%%%%%%%%%%%%%%%%%%%%%%%%%%%%%%%%%%%%%%%%%
\section{Desingularizations of SRF}\label{sec:desingularization}

A SRF $\F$ on a complete Riemannian manifold  $M$ admits a \emph{desingularization} if there exists a
 smooth
surjective map $\rho : \widehat{M}\to  M$ with the following properties: $\widehat{M}$ is a smooth Riemannian manifold foliated by a
regular Riemannian foliation $\widehat{\F}$ and the map  $\rho$ sends leaves
of $\widehat{\F}$ to leaves of $\F.$ In this section we review the desingularizations of polar foliations and desingularizations of SRFs in compact manifolds.

%We will see in this section that polar (or more general infinitesimally polar) foliations admit desingularizations that preserve the transverse geometry. We will %also see that, even in the general case, a SRF $\F$ admits a desingularization that preserves ``part"' of the transverse geometry.

\subsection{The desingularization of polar foliations}

In \cite{Toeben} T\"{o}ben proved that that polar foliations admit desingularizations that preserve the transverse geometry, generalizing some results of
Boualem \cite{Boualem}.

\begin{theorem}[\cite{Toeben}]
\label{Th-Toeben}
Let $\F$ be a polar foliation on a complete Riemannian manifold $M$ of codimension $k$. Then
\begin{enumerate}
\item[a)] Set $\widehat{M}:=\{T_{p}\Sigma\in G_k(TM)|\  p\in N, \Sigma$ is a section of $\F$ through $p\}.$ Then $\widehat{M}$ carries  a natural differentiable structure, for which the inclusion into the Grassmann bundle $G_{k}(TM)$ is an immersion. Moreover, $\widehat{M}$ has a natural
 Riemannian/ totally geodesic bifoliation $(\widehat{\F}, \widehat{\F}^{\perp})$, with respect to the pull-back metric.  We have $\widehat{\F}^{\perp}=\{T\Sigma| \ \Sigma$ is a section of $\F\}$.
\item[b)] The footpoint map $\hat{\pi}: (\widehat{M},\widehat{\F})\rightarrow (M, \F)$ is foliated and maps each horizontal leaf of $\widehat{F}^{\perp}$ isometrically to the corresponding section $\Sigma$ of $\F.$
\end{enumerate}

\end{theorem}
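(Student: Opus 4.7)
The plan is to realize $\widehat{M}$ as a union of smoothly embedded copies of sections inside the Grassmann bundle, using the fact that sections are totally geodesic to ensure smoothness of the gluing, and then to build the two foliations using the section structure together with the Bott connection from Section 3.

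First, for each section $\Sigma$ of $\F$, I would consider the Gauss-type map $\iota_\Sigma \colon \Sigma \to G_k(TM)$ given by $p \mapsto T_p\Sigma$. Since $\Sigma$ is a smooth $k$-dimensional immersed submanifold of $M$, the map $\iota_\Sigma$ is a smooth immersion, and its image is a $k$-dimensional immersed submanifold of $G_k(TM)$. The crucial compatibility observation is that if two sections $\Sigma_1,\Sigma_2$ meet at $p$ with $T_p\Sigma_1 = T_p\Sigma_2$, then being totally geodesic with the same tangent space at $p$ they coincide on some normal neighborhood; thus $\iota_{\Sigma_1}(\Sigma_1) \cap \iota_{\Sigma_2}(\Sigma_2)$ is open in each image and the induced smooth structures agree there. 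Declaring these images to be the charts endows $\widehat{M}$ with a natural differentiable structure for which the inclusion into $G_k(TM)$ is an immersion. At a singular point $q$, different local sections through $q$ have distinct tangent spaces (by the slice theorem \ref{teo-slice-polar} applied to the infinitesimal foliation, which is a polar isoparametric foliation in $T_qS_q$), so they are genuinely separated in $\widehat{M}$.

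The leaves of $\widehat{\F}^\perp$ are then declared to be the connected components of the images $\iota_\Sigma(\Sigma)$. The footpoint map $\hat\pi$ restricted to such a leaf is a diffeomorphism onto $\Sigma$, so the pullback metric $\hat\pi^\ast g$ makes it isometric to $\Sigma$ and hence totally geodesic. To define $\widehat{\F}$, I would use Bott-equifocality (Theorem \ref{thm-s.r.f.-equifocal}): given $\hat p = T_p\Sigma$, the plaque through $\hat p$ is the image of a plaque of $L_p$ under the map $q \mapsto T_q \Sigma_q$, where $\Sigma_q$ is the (unique) section whose tangent space at $q$ is the Bott-parallel transport of $T_p\Sigma$ along a path in $L_p$. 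Since the Bott connection is flat on the normal bundle of a regular leaf, this is locally well-defined; the slice theorem allows us to extend the construction continuously across singular points, where the leaf of $\widehat{\F}$ picks up the ``exceptional'' directions parametrized by the sections through the singular point. This shows $\widehat{\F}$ is a regular foliation whose leaves project onto leaves of $\F$, i.e.\ $\hat\pi$ is foliated.

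The main obstacle, in my view, is establishing that $\widehat{\F}$ is a \emph{regular} (equidimensional) foliation, in particular that the lifted leaves through singular points have the same dimension as generic leaves and fit together smoothly with the regular part. I would handle this by working locally in a tubular neighborhood $\tub(P_q)$ of a plaque through $q$, applying Theorem \ref{lemma-metric-in-S} to replace the metric by $g_0$ (for which the slice is isometric to Euclidean space carrying an isoparametric foliation), and verifying the statement in this linearized model where $\widehat{S_q}$ is the classical ``blow-up'' of the slice along its set of singular strata. The Riemannian/totally geodesic bifoliation property and isometry on horizontal leaves then follow from the construction, since by design the tangent space splits orthogonally into the Bott-horizontal direction and the direction along the section, and the pullback metric restricts to the section's metric on leaves of $\widehat{\F}^\perp$.
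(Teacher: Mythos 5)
The paper does not actually prove Theorem \ref{Th-Toeben}; it only states it and cites T\"oben \cite{Toeben}, illustrating the construction in the homogeneous case by the model $G\times_N\Sigma\to\widehat M$, $[g,x]\mapsto dg_x(T_x\Sigma)$. So your proposal has to be judged against that intended construction, and while your overall picture is the right one (Gauss map of sections into $G_k(TM)$, $\widehat\F^{\perp}$ given by the tangent bundles of sections, $\hat\pi$ isometric on horizontal leaves), there is a genuine gap at the central point. The images $\iota_\Sigma(\Sigma)$ are $k$-dimensional immersed submanifolds of $G_k(TM)$, whereas $\widehat M$ has dimension $n=\dim M$ (over the regular stratum $\hat\pi$ is a bijection). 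Declaring the $\iota_\Sigma(\Sigma)$ to be ``charts'' therefore cannot endow $\widehat M$ with a differentiable structure; these sets are precisely the leaves of the $k$-dimensional foliation $\widehat\F^{\perp}$, not open subsets of $\widehat M$. The compatibility remark (two totally geodesic sections with the same tangent space at a point coincide locally) only identifies $\widehat M$ set-theoretically with the set of pointed germs of sections; it produces no $n$-dimensional local model. A correct argument must exhibit $n$-dimensional charts, e.g.\ of the form (plaque of a regular leaf)$\times$(disk in a section) mapped into $G_k(TM)$ by $(x,\xi)\mapsto T_{\eta_\xi(x)}\Sigma_{\eta_\xi(x)}$ using the end-point maps of Theorem \ref{thm-s.r.f.-equifocal}, and then show these overlap smoothly --- this is where equifocality (constant rank of $\eta_\xi$) is really used, and it is the analogue of the $G\times_N\Sigma$ chart in the homogeneous example.

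Relatedly, you correctly identify the main obstacle --- that the fiber of $\hat\pi$ over a singular point has exactly the complementary dimension so that $\widehat\F$ is a \emph{regular} equidimensional foliation and $\widehat M$ is smooth across these fibers --- but you do not resolve it. Reducing to the linearized model via Theorem \ref{lemma-metric-in-S} is the right first step, yet even there one must prove that the incidence set $\{(x,V)\mid x\in V,\ V\ \text{a section of }\hat\F(q)\}$ is a smooth manifold and that the lifted foliation has constant leaf dimension; for an isoparametric foliation this requires an actual argument (parametrizing the sections through a focal point by the corresponding focal fiber of a regular leaf and invoking the constant-rank property), not just the observation that distinct sections through $q$ have distinct tangent planes. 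As it stands, the proposal describes the objects correctly but the two load-bearing steps --- the $n$-dimensional smooth structure and the regularity of $\widehat\F$ over the singular strata --- are asserted rather than proved.
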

\begin{example}
In the case of a polar action the blow-up is easy to describe.
Let $G$ be a Lie group with a proper polar action on $M$, let $\Sigma$ be a section and $N=N_G(\Sigma)=\{g\in G\mid g(\Sigma)=\Sigma\}$ be the normalizer of $\Sigma$. Then there is a $G$-equivariant diffeomorphism $\Phi:G\times_N\Sigma\to\widehat M$ defined by $\Phi([g,x])=dg_x(T_x\Sigma)\in G_k(TM)$.
\end{example}
Using the desingularization of polar foliations,  T\"{o}ben was able to prove that, the sections of $\F$, as well as their regular leaves, have the same universal cover. He also proved that, by the appropriate choice of a section $\Sigma$, the Weyl pseudogroup extends to a group action on $\Sigma$.

Recently Lytchack \cite{Lytchak} generalized Theorem \ref{Th-Toeben} for  infinitesimally polar foliations.
Using Theorem \ref{thm-infpol} he proves the next result.
\begin{theorem}[\cite{Lytchak}]\label{thm:desing}
Let $\F$ be a SRF on a complete Riemannian manifold. Then $\F$ is infinitesimally polar if and only if  $\F$ has a geometric resolution.
\end{theorem}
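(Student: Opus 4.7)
The plan is to treat the two implications separately, with the backward direction being essentially a direct consequence of Theorem \ref{thm-infpol}(iii) and the forward direction requiring an explicit construction.

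For the implication ``geometric resolution implies infinitesimally polar,'' suppose $\rho:(\widehat M,\widehat\F)\to(M,\F)$ is a desingularization that preserves the transverse length metric, in the sense discussed in the introduction and preceding Theorem \ref{Th-Toeben}. Then $\rho$ induces a local isometry between the leaf spaces $\widehat M/\widehat\F$ and $M/\F$ (indeed, saturated neighborhoods go to saturated neighborhoods with the same transverse metric). Since $\widehat\F$ is a regular Riemannian foliation on a smooth Riemannian manifold, its local leaf space is a Riemannian orbifold by the standard structure of Riemannian submersions adapted to the leaves. Hence $M/\F$ is locally a Riemannian orbifold, and the equivalence (i)$\Leftrightarrow$(iii) of Theorem \ref{thm-infpol} forces $\F$ to be infinitesimally polar at every point.

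For the converse, assume $\F$ is infinitesimally polar. The idea is to construct $\widehat M$ by globally resolving the transverse orbifold structure on $M/\F$ via a frame-bundle-type construction on the local sections. By Theorem \ref{thm-infpol}(iii), every $p\in M$ admits a saturated neighborhood $Q_p$ whose quotient $Q_p/\F$ is a Riemannian orbifold modelled, in the notation preceding Proposition \ref{prop-global}, by a local section $\sigma_p$ of dimension $k=\codim\F$ together with the holonomy pseudogroup $\Phi$. Locally the orbifold is of the form $\sigma_p / W_p$ for a finite reflection group $W_p$ (the local Weyl group). The orthonormal frame bundle $F\sigma_p$ is a smooth manifold on which $W_p$ acts freely via the differential of its isometric action, so that $F\sigma_p / W_p$ is a smooth manifold that resolves the orbifold $\sigma_p/W_p$. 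One then defines $\widehat Q_p$ as the fibered product of $Q_p$ with $F\sigma_p/W_p$ over $\sigma_p/W_p$, with a natural pulled-back metric making the projection to $F\sigma_p/W_p$ a Riemannian submersion whose fibers are the leaves of a regular Riemannian foliation $\widehat\F_p$. The footpoint map $\rho_p:\widehat Q_p\to Q_p$ is foliated and preserves the transverse length metric by construction, since the frame bundle construction leaves the base length metric on $\sigma_p$ unchanged.

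To globalise, I would exploit the fact that the local models glue via the holonomy pseudogroup $(\sigma,\Phi)$: each element of $\Phi$ acts by local isometries between sections $\sigma_\alpha$ and therefore lifts canonically to a local isometry between the associated frame bundles $F\sigma_\alpha$, compatible with the $W_\alpha$-actions. Gluing the $\widehat Q_\alpha$ along these lifted maps produces a global smooth manifold $\widehat M$ with a regular Riemannian foliation $\widehat\F$ and a footpoint projection $\rho:\widehat M\to M$ that sends $\widehat\F$-leaves to $\F$-leaves and preserves the transverse length metric. The main obstacle, on which I expect most of the real work to concentrate, is verifying that these local pieces glue coherently: one must check the cocycle condition for the lifted pseudogroup action on triple overlaps, ensure that the resulting $\widehat M$ is Hausdorff and second-countable, and confirm that the smooth and foliated structures patch consistently across the singular stratum of $\F$. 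The infinitesimal polarity hypothesis enters precisely here, via Theorem \ref{lemma-metric-in-S} and the isoparametric structure of $\hat\F(q)$ guaranteed by Theorem \ref{thm-infpol}, which is what makes the local frame-bundle models match up in a Weyl-group-equivariant way around each singular leaf.
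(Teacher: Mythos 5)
Your ``$\Leftarrow$'' direction is fine and is exactly the argument the survey itself sketches in the introduction: a length-preserving desingularization identifies the local leaf spaces of $\F$ and $\widehat\F$, the local leaf space of a regular Riemannian foliation is a Riemannian orbifold, and Theorem \ref{thm-infpol} (or Proposition \ref{prop-global}) does the rest. (Note the paper does not prove Theorem \ref{thm:desing}; it cites Lytchak, whose construction generalizes the Grassmannian one of Theorem \ref{Th-Toeben}.)

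The ``$\Rightarrow$'' direction has a genuine gap, and it occurs before the gluing step you flag: the local model itself fails. First, the fibered product $\widehat Q_p=Q_p\times_{\sigma_p/W_p}\bigl(F\sigma_p/W_p\bigr)$ is in general not a smooth manifold along singular leaves. For $M=\RR^2$ foliated by concentric circles about the origin one has $\sigma_0\cong\RR$, $W_0=\mathbb{Z}_2$, $F\sigma_0/W_0\cong\RR\to[0,\infty)$, $y\mapsto|y|$, and the fibered product is $\{(p,y)\in\RR^2\times\RR:\ |p|^2=y^2\}$, the double cone, which is singular at the origin. Second, even where the model is smooth, the fiber of $\widehat Q_p\to F\sigma_p/W_p$ over a frame at $[q]$ is just the plaque of $L_q$ itself, so the lifted partition still has dimension drops over the singular leaves: crossing $Q_p$ with a resolution of the quotient orbifold resolves the quotient but not the foliation. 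A geometric resolution must blow up inside $M$, replacing a singular point $q$ by the (in general positive-dimensional) space of local sections of $\hat\F(q)$ through it; this is precisely what the Grassmannian construction $\widehat M\subset G_k(TM)$ of Theorem \ref{Th-Toeben}, and Lytchak's infinitesimally polar generalization of it, accomplishes, with infinitesimal polarity (via Theorems \ref{thm-infpol} and \ref{teo-slice-polar}) guaranteeing that this closure is smooth and the lifted foliation regular. A frame-bundle variant could only be salvaged by taking frames of the normal spaces $\nu_xL_x$ inside $TM$ and closing up over the singular strata, not frames of the quotient orbifold; as written, your local pieces are already singular before any cocycle condition is tested.
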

Here a \emph{geometric resolution} of $(M,\F)$ is a desingularization that
preserves the length of horizontal curves, i.e., if $\hat{\alpha}$ is a  curve orthogonal to the leaves it meets, then $\rho (\hat{\alpha})$ is a curve orthogonal to the leaves it meets with the same length of $\hat{\alpha}$.

%%%%%%%%%%%%
%\begin{remark}[infinitesimal polar foliations]
%In \cite{Lytchak} Lytchak generalized Theorem \ref{Th-Toeben} for infinitesimal polar foliations.
%Using Theorem \ref{thm-infpol} he concludes that $\F$ is infinitesimally polar if and only if  $\F$ has a geometric resolution.
%A \emph{geometric resolution} of $(M,\F)$ is a smooth
%surjective map $\rho : \widehat{M}\to  M$ from a smooth Riemannian manifold $\widehat{M}$
%with a regular Riemannian foliation $\widehat{\F}$ such that the following holds
%true. The map $\rho$ sends leaves
%of $\widehat{\F}$ to leaves of $\F$ and
%preserves the length of horizontal curves, i.e., if $\hat{\alpha}$ is a  curve orthogonal to the leaves it meets, then $\rho (\hat{\alpha})$ is a curve %orthogonal to the leaves it meets with the same lenght of $\hat{\alpha}$.
%%Lytchak proved in \cite{Lytchak} that a s.r.f $\F$ has a geometric resolution if and only if $\F$ is infinitesimally polar.%
%\end{remark}

%%%%%%%%%%%%%%%%%%%%%%%%%%%%%%%%%%%%
%%%%%%%%%%%%%%%%%%%%%%%%%%%%%%%%%%%%%

\subsection{Applications of desingularization of polar foliations}

In this section we provide some applications of desingularization to Molino's conjecture and to the problem of  existence of simple foliations.

Recall that a differential $k$-form $\omega$ is said to be \emph{basic} if,
for all $X\in \singularF$, we have $i_{X}\omega=0$ and $i_{X}d\omega=0.$

\begin{theorem}[\cite{AlexLytchak}]
\label{theorem-basic-forms}
Let $(M,\F )$ be an infinitesimally polar  foliation. Let $(\widehat{M}, \widehat{\F})$
be its geometric resolution with the resolution map $\rho:\widehat M\to M$.  Then $\rho$ induces an isomorphism between
the algebra of basic forms.
\end{theorem}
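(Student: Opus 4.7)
The plan is to verify that $\rho^*$ gives a well-defined, injective, and surjective homomorphism between the algebras of basic forms. Morally, this reflects the fact that, for infinitesimally polar foliations, the leaf space $M/\F$ is (at least locally) a Riemannian orbifold and coincides with $\widehat M/\widehat\F$ via $\rho$, so basic forms on both sides should correspond to orbifold-smooth forms on this common quotient.

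Well-definedness follows from $\rho$ being a foliated smooth map: the differential $d\rho$ sends vectors tangent to $\widehat\F$ into vectors tangent to $\F$, so for a basic form $\omega$ on $M$ and a vector field $\hat X$ tangent to $\widehat\F$, we have $i_{\hat X}\rho^*\omega = \rho^*(i_{d\rho(\hat X)}\omega) = 0$, and analogously $i_{\hat X}\,d(\rho^*\omega) = 0$ using $\rho^*\circ d = d\circ\rho^*$. Injectivity follows from the construction of the geometric resolution (compare Theorem \ref{Th-Toeben}), which restricts to a diffeomorphism between the regular strata of $\widehat M$ and $M$: if $\rho^*\omega = 0$, then $\omega$ vanishes on the open dense regular stratum of $M$, and hence everywhere by smoothness.

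Surjectivity is the main obstacle. Given a basic form $\hat\omega$ on $\widehat M$, pushing it forward through the regular-stratum diffeomorphism yields a smooth basic form $\omega$ on the regular stratum of $M$, and the task is to show that $\omega$ extends smoothly across the singular strata. I would proceed locally using the infinitesimal polar structure: by Theorem \ref{thm-infpol}, every $p \in M$ admits a saturated neighborhood $Q$ such that $Q/\F$ is a Riemannian orbifold, locally isometric to a quotient $\sigma/W$ of a transversal $\sigma$ by its local Weyl pseudogroup $W$ acting by isometries. Basic forms on $(Q,\F)$ then correspond bijectively, by restriction, to $W$-invariant smooth forms on $\sigma$. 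On the resolution side, $\widehat\F|_{\rho^{-1}(Q)}$ is a regular Riemannian foliation whose leaf space, since $\rho$ preserves horizontal distances, is the same orbifold $\sigma/W$; its basic forms likewise correspond to $W$-invariant smooth forms on $\sigma$. Under these two identifications $\rho^*$ reduces to the identity on $W$-invariant forms on $\sigma$, so $\hat\omega$ descends locally to a smooth basic form on $(Q,\F)$. These local extensions necessarily agree with $\omega$ on the regular stratum and therefore glue to a global smooth basic form on $M$ whose pullback is $\hat\omega$.
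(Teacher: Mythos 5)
Your proposal is correct and follows essentially the same route as the paper: both reduce to the local statement that, on an isoparametric neighborhood and on its blow-up, restriction to a common local section identifies basic forms with the Weyl-(pseudo)group-invariant smooth forms on that section, so that $\rho^*$ becomes the identity under this identification. Note only that the bijectivity of this restriction map on the singular side --- in particular, that every $W$-invariant smooth form on the transversal $\sigma$ extends to a smooth basic form on $Q$ --- is not elementary; the paper imports it as Lemma~3.5 of \cite{AlexGorodski}, so your sketch is complete modulo that citation.
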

\begin{remark}
If the foliation $\F$ is a polar foliation, then it was already proved in \cite{AlexGorodski} that the algebra of basic forms of $M$ relative
to $\F$ is isomorphic
to the algebra of those differential forms on a section $\Sigma$ that are
invariant under the generalized Weyl pseudogroup of $\Sigma$.
\end{remark}
\begin{proof}

Consider the resolution $\rho:(\widehat M, \widehat{\F}) \to (M,\mathcal F)$.
We first prove this result locally. We claim that $\rho^{*}:\Omega_{b}^{k}(U)\rightarrow \Omega_{b}^{k}(\widehat{U})$ is an isomorphism, where $U$ is an isoparametric neighborhood and $\widehat U$ its blow-up. Note that a section $\Sigma$ on $(U,\F)$ and its Weyl group can be naturally identified with a section of $(\widehat U,\widehat \F)$ and its holonomy group. From \cite[Lemma 3.5]{AlexGorodski} the respective restriction maps to the $\Sigma$ induces isomorphisms $\Omega_{b}^{k}(U)\cong \Omega_b^k(\Sigma)^W\cong  \Omega_{b}^{k}(\widehat{U})$. This implies first the injectivity of $\rho^{*}:\Omega_{b}^{k}(M)\rightarrow \Omega_{b}^{k}(\widehat{M})$  and from that the surjectivity.
\end{proof}

\subsubsection{Molino's conjecture}

In \cite{Molino} Molino proved that, if $M$ is compact, the partition by leaf closures of a (regular) Riemannian foliation $\F$ forms a singular Riemannian foliation. He conjectured that if the same result is true for a SRF $\F$. In \cite{Alex4} this conjecture was proved in the case of polar foliations. In particular, it was proved that the restriction of  transversal Killing
fields to a slice are parallel to their restrictions to the nearest minimal stratum with respect to the normal connection of the intersection of the minimal stratum with the slice.

In what follows we recall how the desingularization can be used to provide a proof of Molino's conjecture  for infinitesimal polar foliation.

\begin{theorem}[\cite{AlexLytchak}]
\label{theorem-molino-conjecture}
Let  $(M, \F )$  be an infinitesimally polar foliation on a complete Riemannian manifold. Then the leaf closures of $\F$ form a partition $\bar {\mathcal F}$ of $M$ which is a singular Riemannian foliation.
\end{theorem}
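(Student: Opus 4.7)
My plan is to lift the whole problem to the geometric resolution $\rho:(\widehat M,\widehat\F)\to(M,\F)$ provided by Theorem \ref{thm:desing}, apply Molino's classical theorem for \emph{regular} Riemannian foliations upstairs, and then transfer the resulting structure back down to $M$ using that $\rho$ is a geometric resolution (it preserves horizontal length) and that its algebras of basic objects agree with those of $\F$ by Theorem \ref{theorem-basic-forms}.

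Concretely, the first step is to apply the classical Molino theorem to $\widehat M$: the closures $\overline{\widehat\F}$ of leaves of the regular Riemannian foliation $\widehat\F$ form a SRF on $\widehat M$, whose tangent distribution is generated at each point by $T\widehat\F$ together with the values of a family of transverse Killing fields. I would then define
\[
\overline{\F}:=\{\,\rho(\overline{\widehat L})\mid \overline{\widehat L}\in\overline{\widehat\F}\,\}
\]
and verify that its members are precisely the closures of leaves of $\F$. Since $\rho$ is continuous and maps leaves of $\widehat\F$ to leaves of $\F$, each $\rho(\overline{\widehat L})$ is contained in the closure $\overline L$ of the corresponding leaf $L\subset M$; the reverse inclusion follows from surjectivity of $\rho$ on leaves together with continuity, working on relatively compact saturated open sets.

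To prove that $\overline\F$ is a SRF I would invoke the criterion recorded in Remark \ref{rem-prop-metric-stratum}. Properties (a) (stratification by submanifolds) and (b) (continuity of plaques) are inherited from $\overline{\widehat\F}$ through the surjection $\rho$. Property (c), that the restriction of $\overline\F$ to each stratum is Riemannian, reduces after restricting $\rho$ to the preimage of the stratum to Molino's theorem applied to $\widehat\F$ there, combined with the length preservation of $\rho$ on horizontal curves. Property (d) is produced by lifting a horizontal vector at a point of $M$ to a horizontal curve in $\widehat M$, along which $\overline{\widehat\F}$ supplies the required distribution, and pushing the output back through $\rho$. The module of smooth tangential fields needed for the singular foliation condition is obtained by descending Molino's transverse Killing fields on $\widehat M$ along $\rho$; the basic-form isomorphism of Theorem \ref{theorem-basic-forms} provides the algebraic backbone for this descent, since it encodes the identification of the transverse geometries of $(M,\F)$ and $(\widehat M,\widehat\F)$.

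The hard part will be precisely this last descent step. Transverse Killing fields live naturally on $\widehat M$, but they do not automatically push down to smooth vector fields on $M$, because $\rho$ has positive-dimensional fibers over singular points. One must show that, over each isoparametric neighborhood, the relevant transverse Killing fields on $\widehat M$ arise from vector fields on a local section that are invariant under the Weyl pseudogroup, so that they correspond through $\rho$ to honest smooth tangential fields of $\overline\F$ on $M$. Once that projectability is in place, augmenting $\singularF$ with the descended fields yields a module acting transitively on every closure $\overline L$, and Remark \ref{rem-prop-metric-stratum} closes the argument.
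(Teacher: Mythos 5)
Your proposal is correct and follows essentially the same route as the paper: pass to the geometric resolution of Theorem \ref{thm:desing}, apply Molino's structure theorem to the regular Riemannian foliation $\widehat\F$ upstairs, and descend the transverse Killing fields via the basic-form isomorphism of Theorem \ref{theorem-basic-forms} (this is exactly the content of the paper's Lemma \ref{lemma-molino-conjecture}, which resolves the projectability issue you correctly flag as the hard step). The only cosmetic difference is that the paper verifies the Riemannian property of $\bar\F$ directly from properness of $\rho$ (so $\rho(\bar{\hat{F}})=\overline{\rho(\hat{F})}$) and the projection of horizontal geodesics, rather than routing through the criterion of Remark \ref{rem-prop-metric-stratum}.
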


We start by recalling the definition of basic vector fields. A horizontal vector field $X$ of a (regular) Riemannian foliation is called \emph{basic} (or foliated) if it is basic with respect to a local submersion that describes the plaques of the foliation. A horizontal vector field along a leaf $L$ of an infinitesimal polar foliation $\F$ is \emph{basic} if $X$ is tangent to the stratum $\Sigma$ of $L$ and a basic vector field of $\F|_{\Sigma}$

The duality between basic one-forms and basic vector fields of Riemannian foliations and Theorem \ref{theorem-basic-forms} imply the next lemma.

\begin{lemma}
\label{lemma-molino-conjecture}
 Let $\widehat{X}$ be a basic vector field of $\widehat{\F}$. Consider the continuous vector field $X$ on $M$ such that  $X\circ\rho=\rho_{*}(\hat{X}).$ Then $X$ is a smooth basic vector field of the $\F.$
\end{lemma}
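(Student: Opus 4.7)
My approach is to transfer $\widehat{X}$ to $M$ via metric duality combined with the basic-forms isomorphism of Theorem \ref{theorem-basic-forms}. The plan breaks into three steps: (i) show the descent $\rho_{*}\widehat{X} \mapsto X$ is well-defined and continuous, (ii) identify $X$ with the metric dual of a smooth basic $1$-form on $M$, thereby establishing smoothness, and (iii) verify that $X$ is basic for $\F$ stratum by stratum.

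For step (i), I first need to see that if $\widehat{x}_{1},\widehat{x}_{2}\in\rho^{-1}(x)$, then $(\rho_{*})_{\widehat{x}_{1}}(\widehat{X}_{\widehat{x}_{1}}) = (\rho_{*})_{\widehat{x}_{2}}(\widehat{X}_{\widehat{x}_{2}})$. Points in the same fibre of $\rho$ are related by local holonomy of the regular foliation $\widehat{\F}$ (for instance, in the polar case by the natural action permuting the section lifts over a common footpoint), and $\widehat{X}$ being basic is exactly invariance under such holonomy, so the formula $X \circ \rho = \rho_{*}\widehat{X}$ defines a continuous field $X$ on $M$.

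For step (ii), since basic vector fields of a regular Riemannian foliation are horizontal, I would dualize with the pulled-back metric $\widehat{g} = \rho^{*}\metric$ and set $\widehat{\omega} := \widehat{g}(\widehat{X},\cdot)$. A standard computation shows $\widehat{\omega}$ is a basic $1$-form on $\widehat{M}$: the identity $i_{\widehat{Y}}\widehat{\omega}=0$ for vertical $\widehat{Y}$ follows from horizontality of $\widehat{X}$, while $i_{\widehat{Y}}d\widehat{\omega}=0$ follows by direct computation using that $\widehat{X}$ is foliated, that the vertical distribution is involutive, and that $\widehat{\F}$ is Riemannian (so $\widehat{g}(\widehat{X},\widehat{Z})$ is constant along leaves for $\widehat{Z}$ basic horizontal). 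By Theorem \ref{theorem-basic-forms} there is a unique smooth basic $1$-form $\omega$ on $M$ with $\rho^{*}\omega = \widehat{\omega}$. Now for any horizontal $\widehat{Y}$ at a regular point $\widehat{x}$, using that the geometric resolution preserves horizontal inner products,
\begin{equation*}
\metric(\omega^{\sharp},\rho_{*}\widehat{Y}) \;=\; \omega(\rho_{*}\widehat{Y}) \;=\; (\rho^{*}\omega)(\widehat{Y}) \;=\; \widehat{\omega}(\widehat{Y}) \;=\; \widehat{g}(\widehat{X},\widehat{Y}) \;=\; \metric(\rho_{*}\widehat{X},\rho_{*}\widehat{Y}).
\end{equation*}
Hence $X$ and the smooth vector field $\omega^{\sharp}$ agree on the regular stratum, and by continuity on all of $M$. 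In particular $X$ is smooth.

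For step (iii), basicness of $\omega$ forces $\omega^{\sharp}$ to be horizontal in the singular sense, i.e.\ perpendicular to every leaf of $\F$ at every point. To verify the stratumwise basicness definition recalled in the excerpt, I would check tangency of $X$ to each stratum $\mathcal{M}$ of $\F$ and then basicness of $X|_{\mathcal{M}}$ for the regular Riemannian foliation $\F|_{\mathcal{M}}$. Tangency comes from the slice-theorem picture: the infinitesimal foliation at a singular point is polar (Theorem \ref{teo-slice-polar}), and the horizontal directions orthogonal to $\mathcal{M}$ within a slice exit the stratum, while $X = \omega^{\sharp}$, being the dual of a \emph{smooth} basic form, has no such transverse component at points of $\mathcal{M}$ by a continuity argument applied to the density of the regular set within $\mathcal{M}$. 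Basicness of $X|_{\mathcal{M}}$ then descends from the basicness of $\widehat{X}$ together with the fact that $\rho$ restricts to a foliated local diffeomorphism between the regular strata of $\widehat{\F}|_{\widehat{\mathcal{M}}}$ and $\F|_{\mathcal{M}}$. The main obstacle I expect lies precisely in this last step --- checking well-definedness and tangency at singular points --- which rests on combining the local structure of infinitesimally polar foliations (Theorems \ref{lemma-metric-in-S}, \ref{teo-slice-polar}) with the existence statement of Theorem \ref{theorem-basic-forms}; the algebraic part of the proof is almost forced by the metric duality.
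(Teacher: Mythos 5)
Your proof is correct and follows essentially the same route as the paper, which disposes of this lemma in one line by invoking precisely the duality between basic one-forms and basic vector fields together with the basic-forms isomorphism of Theorem \ref{theorem-basic-forms}. The only imprecision is writing $\widehat{g}=\rho^{*}\metric$ (this pullback degenerates along the exceptional fibres); one should dualize with the resolution metric on $\widehat{M}$, after which your computation goes through verbatim since the geometric resolution preserves horizontal inner products.
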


We are ready to prove Molino's conjecture.

First note that the closure $\bar{L}$ of each leaf $L$ of $\F$ is a closed submanifold. This can be proved using the fact that if $L$ and its closure $\bar{L}$ are contained in the same stratum and by the structure theorem of Molino  \cite{Molino}.

Now note that since the resolution $\rho$ is proper, so $\rho(\bar{\hat{F}})=\overline{\rho(\hat{F})}$. Also recall that the resolution projects horizontal geodesics of $\hat{\F}$ to horizontal geodesics of $\F$. These two facts imply that the partition $\bar{\F}=\{\bar{L}\}_{L\in\F}$ is Riemannian, i.e., if a horizontal geodesic meets an element of the partition orthogonally, then it is orthogonal to each element of the partition.

Finally let us prove that $\overline {\mathcal F}$ is a singular foliation. Consider a vector $V$ tangent to the closure of a leaf $L_{p}$  at a point $p$. Let $\widehat{V}$ be a vector with foot point $\hat{p}$ tangent to the closure of a leaf $\widehat{L}_{\hat{p}}$ such that $\rho(\widehat{L}_{\hat{p}})=L_{p}$. It follows from the structure theorem of Molino that there exists a basic transversal Killing vector field $\widehat{X}$ such that $\hat{X}(\hat{p})=\hat{V}$ and $\widehat{X}$ is tangent to the leaves of the singular Riemannian foliation $\bar{\hat{\F}}$.
It follows from Lemma \ref{lemma-molino-conjecture} that the vector field $X$ so that  $X\circ\rho=\rho_{*}(\hat{X})$ is a smooth basic vector field with $X(p)=V$. Since  $\rho(\overline{\widehat{F}})=\overline{\rho(\widehat{F})}$ we conclude that $X$ is tangent to the leaves of $\overline{\F}$.

\subsubsection{Simple foliations}

A SRF on a complete Riemannian manifold $M$
is called \emph{simple} if its leaves are pre-images of a smooth map $H:M\to\Sigma$
where  $\dim\Sigma$ is the codimension of the regular leaves.
A very simple example is the action of $SU(3)$ on itself by conjugation. In this case, the orbits are pre-image of
$ \mathrm{tr}:SU(3)\to \mathbb{C}.$

The question, when a polar foliation is simple, is an old and natural question and has always been raised in the development of the theory of isoparametric foliations and its generalizations; see the surveys of Thorbergsson \cite{Th,ThSurvey2,ThSurvey3}

In \cite{Alex7} it was proved that if the ambient space is simply connected, then the every polar foliation is simple. This result generalizes a result due to the first author and Gorodski \cite{AlexGorodski} for the case of polar foliations with flat sections
and previous results of Heintze \emph{et al}.~\cite{HOL}, Carter and West~\cite{CarterWest2},
and Terng~\cite{terng} for isoparametric submanifolds.  It can also be viewed as a converse to the main results
in~\cite{Alex1} and in Wang~\cite{Wang}.

In this section we explain how the discussion about infinitesimal foliation presented in the last sections and some results of \cite{Alex7,Alex8} can be used to prove that each SRF on a simply connected space is a simple foliation if  its leaf space is a good orbifold.

We start by recalling one of the main results in \cite{Alex7}.

\begin{theorem}[\cite{Alex7}]
\label{proposition-reflectiongroups}
 Let $N$ be a simply connected complete Riemannian manifold, let $\Gamma$ be a reflection
 group on $N$ and let $C\subset N$ be a  fixed chamber, i.e., a fundamental domain for the action of $\Gamma$ on $N.$
  Let $\pi:N\to N/\Gamma=C$ denote the canonical projection.  Then there is a smooth $\Gamma$-equivariant map $G:N \to N$ with the following properties:
 \begin{enumerate}
 \item[(a)] $G|_{C}:C\to C$ is a homeomorphism.
 \item[(b)] Let $K$ be a connected component of an orbit type manifold for the action of $\Gamma$ on $N$.  Then
 the restriction  $G|_{K}$ is a diffeomorphism.
 \item[(c)] The composition $G \circ \pi :N \to N$ is smooth.
 \end{enumerate}
\end{theorem}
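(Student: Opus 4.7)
The strategy is to construct $G$ so that, in Fermi coordinates $(t,y)$ along each wall $W$ (with $t$ the signed normal distance and $y$ coordinates on $W$), the map $G$ is infinitely flat in the $t$-direction along $\{t=0\}$. Because a $\Gamma$-equivariant smooth $G$ preserving walls has the form $G(t,y)=(t\,\tilde{s}(t^{2},y),\,z(t^{2},y))$ in corresponding Fermi coordinates on the target, one computes $G\circ\pi(t,y)=(|t|\,\tilde{s}(t^{2},y),\,z(t^{2},y))$; this is smooth at $t=0$ if and only if $\tilde{s}(\cdot,y)$ vanishes to infinite order at $0$. Thus condition (c) is equivalent to the flatness condition above, and the problem reduces to constructing a homeomorphism $G|_{C}\colon C\to C$ which is infinitely flat in the outward normal direction along every wall of $C$, and then extending it $\Gamma$-equivariantly.

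First I would fix a smooth homeomorphism $h\colon[0,\infty)\to[0,\infty)$ which is strictly increasing on $(0,\infty)$, equal to the identity for $t$ large, and infinitely flat at $0$ (for instance $h(t)=\int_{0}^{t}\varphi(s)\,ds$ for a suitable nonnegative $\varphi$ which is flat at $0$ and equal to $1$ outside a compact set). In Fermi coordinates $(t,y)$ on a tubular neighborhood of a wall $W$, the formula $(t,y)\mapsto(h(t),y)$ defines a smooth self-map of the chamber side $\{t\geq 0\}$ that is infinitely flat on $W$. Using a $\Gamma$-invariant partition of unity subordinate to a cover of $N$ by invariant tubular neighborhoods of the walls together with an interior region, I would patch these local modifications into a preliminary smooth $\Gamma$-equivariant map $G_{1}\colon N\to N$ that satisfies the flatness condition transverse to every codimension-one stratum.

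The construction is then iterated over the codimension of the singular strata. At a corner where walls $W_{1},\dots,W_{k}$ meet, the local stabilizer acts on the normal space as a finite reflection group, and Chevalley--Shephard--Todd provides algebraically independent invariant polynomials generating the ring of invariants; in terms of these, one builds a smooth equivariant local modification that is simultaneously infinitely flat transverse to each $W_{i}$. Patching these into $G_{1}$ via another invariant partition of unity produces $G$. Property (a) then follows from strict monotonicity of $h$ together with careful support control in the partition of unity; property (b) because on each orbit-type stratum $K$ the map $G|_{K}$ is built from smooth components with non-vanishing derivative (using monotonicity of $h$ on $(0,\infty)$) and is a continuous bijection, hence a diffeomorphism; and property (c) is the built-in flatness condition.

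The main obstacle is the inductive step near corners of $C$, where modifications flattening $G$ along distinct walls must be combined without spoiling the flatness already achieved or destroying strict monotonicity. Two observations resolve this: infinitely flat functions form an ideal stable under addition, multiplication, and composition with smooth functions, so successive flat modifications remain flat; and by shrinking supports in the partition of unity, each wall-modification affects only a small invariant tubular neighborhood of its own wall, so monotonicity is preserved. The Chevalley--Shephard--Todd analysis at corners is the technical heart of the argument, ensuring that a simultaneous flat smoothing exists and is genuinely equivariant.
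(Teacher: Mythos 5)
Your central device is the paper's: the function $h$ with $h^{(n)}(0)=0$, $h'\neq 0$ away from $0$ and $h=\mathrm{id}$ near infinity, together with the observation (via the even/odd normal form) that smoothness of $G\circ\pi$ across a wall is equivalent to infinite flatness of the normal component of $G$ along that wall. The gap lies in the assembly. You first patch the single-wall modifications with a $\Gamma$-invariant partition of unity and claim the result is already flat transverse to \emph{every} wall; but near a point where two walls $W_1,W_2$ meet, the tubular neighborhoods necessarily overlap, and a convex combination $\phi_1G_{W_1}+\phi_2G_{W_2}$ (however interpreted on a manifold) is flat along $W_1$ only if every summand with nonvanishing cutoff there is flat along $W_1$ --- which $G_{W_2}$ is not. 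So your stage-one map already fails precisely at the corners, before the corner stage begins; shrinking supports cannot fix this because the walls intersect. The ideal property of flat functions that you invoke rescues \emph{composition} of flat modifications, not their convex combination. This is exactly why the paper composes rather than patches: $G=F_0\circ F_1\circ\cdots$, one $\Gamma$-equivariant contraction per stratum ordered by codimension, with the wall contraction written as $F_1(x+tv)=x+l_1(x)\,h\bigl(t/l_1(x)\bigr)v$ where the scale $l_1$ shrinks toward the deeper strata; each later factor then preserves, under composition, the flatness achieved by the earlier ones. Relatedly, Chevalley--Shephard--Todd gives you invariant polynomials but not, by itself, an equivariant homeomorphism of the normal space that is simultaneously flat transverse to all walls through a corner (the natural such map is again a composition, e.g.\ of a radial contraction $x\mapsto h(\|x\|)x/\|x\|$ with the wall contractions); and convex combinations of injective maps need not be injective, so properties (a) and (b) of the patched map are also not established.

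A second, smaller omission: your Fermi-coordinate computation is valid along a single wall (a totally geodesic hypersurface), but for higher-codimension strata of a curved $N$ the Euclidean local models do not transplant verbatim. The paper first replaces $g$ by an auxiliary metric $\tilde g$ in which the normal slices of the strata are flat and totally geodesic, precisely so that the corner analysis and the verification that $G\circ\pi$ is smooth reduce to the linear reflection-group picture you describe. Without some such normalization, the reduction to the linear model at corners is not justified.
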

\begin{example}
Let us illustrate the above result with a very simple example. Set $N=\mathbb{R}$ and $\Gamma=\{id, -id\}$. Then $C:=[0,\infty)$ is a chamber  and the canonical projection $\pi:N\to N/\Gamma=C$ is given by $\pi(x)=|x|.$ Consider a smooth function $h:[0,\infty)\to [0,\infty)$ such that:
\begin{enumerate}
\item  $h^{(n)}(0)=0$, for all $n$ (where $h^{(n)}$ is the $n$-th derivative of $h$),
\item $h(t)=t$ for $t\geq 1,$
\item $h'(t)\neq 0$ for $t\neq 0$.
\end{enumerate}
 Define the map $G:N\to N$  as $G(0)=0$ and $G(x)=h(|x|)\frac{x}{|x|}$ for $x\neq 0.$ It is not difficult to check that the map $G$ satisfies items (a), (b) and (c).
\end{example}
\begin{proof}

The proof of Theorem \ref{proposition-reflectiongroups} demands several considerations that are too technical to include in this survey. But we can still give an idea of the proof.

First, a new metric $\tilde{g}$ is defined, so that normal slices of the strata are flat and totally geodesic.

The  desired map $G$ can be constructed as a composition of $\Gamma$-equivariant maps $F_i$.
%that are   constructed using the exponential map with respect to the metric $\tilde{g}$.
The restriction of each map $F_i$ to  a neighborhood of a stratum $K_i$ is a contraction constructed using the exponential map with respect to the metric $\tilde{g}$.

For example, consider the particular case where  $N=\mathbb{R}^2$, $\Gamma$ is a finite group of reflections in some lines through the origin, and let $K_{1}$ be the union of these lines and  $K_{0}:=\{(0,0)\}$. In this simple case, $\tilde{g}=g$. We define in a neighborhood of $K_1,$
$$F_{1}(x+t v)=x+l_{1}(x)h(\frac{t}{l_{1}(x)})v,$$
where $v$ is a unit vector normal to the lines with footpoint $x$,  $h$ is a function defined above with the additional property that $h^{(n)}$ is a nonnegative increasing function,($t>0$ close to $0$) and $l_1$ is an appropriate function. In a neighboorhood of $K_0$ we have that
$$F_{0}(x,y)=\frac{h(\sqrt{x^{2}+y^{2}})}{\sqrt{x^{2}+y^{2}}}(x,y) ,$$
for $(x,y)\neq (0,0)$ and $F_{0}(0,0)=(0,0).$ In this particular case $G=F_{0}\circ F_{1}$.

Finally, returning to the general case,  the properties of the  metric $\tilde{g}$ allow  one  to verify that $G$ is smooth in a similar way as one can do in Euclidean case.

\end{proof}

\begin{theorem}
Let $\F$ be a SRF on a simply connected complete Riemannian manifold~$M$. Assume that $M/\F$ is a good orbifold $\Sigma/W$.
Then   we can find   a smooth map $H:M\to \Sigma$ such that the
leaves of $\F$ coincide with the level sets of the map $H$ and such that $H(M)$ is isometric to $M/\F$.
\end{theorem}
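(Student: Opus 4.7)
The plan is to generalize the argument of \cite{Alex7} for simply connected polar foliations to the infinitesimally polar setting, using the smoothing map from Theorem \ref{proposition-reflectiongroups} to compensate for the nonsmoothness of the canonical projection near the chamber walls of the good orbifold.

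First, since $M/\F$ is a Riemannian orbifold, Proposition \ref{prop-global} shows that $\F$ is infinitesimally polar. Possibly after replacing $(\Sigma,W)$ by the universal cover with its reflection group (as in the polar case of \cite{Alex7}), one may assume $W$ is a reflection group on $\Sigma$; write $\pi:\Sigma\to\Sigma/W$ for the quotient and fix a fundamental chamber $C\subset\Sigma$, so that $\pi|_{\overline C}$ is a homeomorphism onto $M/\F$. Around each point of $M$ choose an isoparametric neighborhood $U_\alpha$ with local section $\sigma_\alpha$; the transversal projection $h_\alpha:U_\alpha\to\sigma_\alpha\subset\Sigma$ collapsing each plaque to its intersection with $\sigma_\alpha$ is smooth on $U_\alpha$ and constant on leaves.

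I would then assemble the $h_\alpha$ into a continuous global map $\tilde H:M\to\Sigma$ with image in $\overline C$. On overlaps the local lifts $h_\alpha, h_\beta$ differ by an element of the holonomy pseudogroup $\Phi$ of $\F$, which, since $M/\F=\Sigma/W$ is a good orbifold, is realized by restrictions of elements of $W$; because $M$ is simply connected, a monodromy argument rules out global obstructions and produces a continuous lift $\tilde H$ that is smooth on the regular stratum, constant on leaves, and injective on $M/\F$. Apply Theorem \ref{proposition-reflectiongroups} to $(N,\Gamma)=(\Sigma,W)$ to obtain a smooth $W$-equivariant map $G:\Sigma\to\Sigma$ with $G|_{\overline C}:\overline C\to\overline C$ a homeomorphism, $G$ diffeomorphic on each orbit type manifold, and $G\circ\pi$ smooth, and set $H:=G\circ\tilde H$. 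Locally in each $U_\alpha$ the map $\tilde H$ agrees, up to an element of $W$, with the smooth map $h_\alpha$, so the smoothness of $G\circ\pi$ translates into smoothness of $H$ on all of $M$. Injectivity of $G|_{\overline C}$ and of $\pi|_{\overline C}$, combined with $\tilde H$ separating leaves, gives that the level sets of $H$ are precisely the leaves of $\F$. Finally, $H(M)=G(\overline C)=\overline C$, equipped with the subspace metric from $\Sigma$, is isometric to $M/\F$ by the standard description of the quotient metric on a Coxeter chamber.

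The hard part will be the smoothness of $H$ at the singular strata of $M$: the lift $\tilde H$ is merely continuous there, with the same type of absolute-value-like nonsmoothness as $\pi$ at the walls of $\overline C$, and smoothness is recovered only after composition with $G$. Verifying this requires passing to a slice $S_q$ and using that the infinitesimal foliation $\hat\F(q)$ is polar and hence isoparametric by Theorem \ref{teo-slice-polar}, so that in linearized coordinates $H|_{S_q}$ becomes $G$ composed with the canonical smooth foliated quotient of the isoparametric foliation on $\nu_q L_q$, which is precisely the situation that Theorem \ref{proposition-reflectiongroups} is built to handle. The simple connectedness of $M$ enters essentially in the monodromy step producing $\tilde H$: without it the holonomy pseudogroup could produce a global obstruction preventing a continuous lift.
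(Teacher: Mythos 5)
Your overall strategy (build a continuous leaf-space-valued map into a chamber of $\Sigma$ and then smooth it by post-composing with the map $G$ of Theorem \ref{proposition-reflectiongroups}) is the right idea, but there is a genuine gap at the one step that actually matters: the smoothness of $H$ across the singular strata of $M$. Your starting claim that the ``transversal projection'' $h_\alpha:U_\alpha\to\sigma_\alpha$ collapsing each plaque to its intersection with the local section is smooth is false for singular plaques: a singular plaque meets $\sigma_\alpha$ in a whole orbit of the local Weyl group, so to get a single-valued map you must project to a chamber, and that projection has exactly the $x\mapsto|x|$ type of non-smoothness. You acknowledge this later, but the repair you propose does not close the gap. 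Theorem \ref{proposition-reflectiongroups} only asserts that $G\circ\pi$ is smooth on $N=\Sigma$ itself, i.e.\ on the \emph{section}, where the ``foliation'' is the orbit decomposition of a reflection group. In the slice $S_q$ the infinitesimal foliation $\hat\F(q)$ has positive-dimensional isoparametric leaves, and there is no ``canonical smooth foliated quotient'' of $\nu_qL_q$ onto the section to compose $G$ with: the leaf projection $\nu_qL_q\to\overline C_q\subset\Sigma_q$ is again only continuous. Promoting smoothness of $G\circ\pi$ from the section to the whole slice is precisely a Chevalley-restriction-type statement (the main theorem of \cite{Alex7} for polar foliations), and it is not a consequence of Theorem \ref{proposition-reflectiongroups} alone; as written, your argument assumes what it needs to prove.

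The paper circumvents exactly this difficulty by a different route: it first passes to the geometric resolution $\rho:(\widehat M,\widehat\F)\to(M,\F)$ of Theorem \ref{thm:desing}, where $\widehat\F$ is a \emph{regular} Riemannian foliation, so the local projection onto a section is an honest smooth submersion and $\widehat H:=\pi_\Sigma\circ G\circ\pi\circ\psi^{-1}$ is manifestly smooth (well-definedness being checked via the $\Gamma$-equivariance of $G\circ\pi$, which is the analogue of your monodromy step). The descent of smoothness from $\widehat H$ on $\widehat M$ to $H$ on $M$ (with $H\circ\rho=\widehat H$) is then supplied by Theorem \ref{theorem-basic-forms}, the isomorphism of basic forms under the resolution. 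To make your direct approach work you would need either to import that descent mechanism anyway, or to prove locally in each slice that $G$ composed with the continuous chamber projection of an isoparametric foliation is smooth --- which is a nontrivial result in its own right and is not established by the tools you cite.
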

\begin{proof}
The assumption implies that $(M,\F)$ is infinitesimally polar; see Proposition \ref{prop-global}. Since $M$ is simply connected it follows from \cite{Alex8} that $M/\F=\Sigma/W=\widetilde{\Sigma}/\Gamma$ where $\widetilde{\Sigma}$ is the covering of $\Sigma$ and $\Gamma$ is a reflection (possible empty) group of $\widetilde{\Sigma}$.

Now consider the (infinitesimally polar) desingularization $\rho:(\widehat{M},\widehat{\F})\to (M,\F)$ from Theorem \ref{thm:desing}.
From above we have that $\widehat{M}/\widehat{\F}=\widetilde{\Sigma}/\Gamma$. We must first construct a smooth $\widehat{\F}$-invariant map $\widehat{H}:\widehat{M}\to \Sigma$ that describes the foliation $\widehat{\F}$.

Consider a covering of $\widehat{M}$ by trivial neighborhoods. Let $x$ be a point contained in a trivial neighborhood and take a $\sigma_1$ local section through  $x$.

The assumption that $M/\F$ is equivalent (in the sense of pseudogroups \cite[Appendix D]{Molino}) to $\widetilde{\Sigma}/\Gamma$ implies that there exists a isometry $\psi_1:U_1\subset \widetilde{\Sigma} \to \sigma_1$ and we can define $$\widehat{H}(x)= \pi_{\Sigma}\circ G\circ \pi\circ\psi_{1}^{-1}(x),$$
where $\pi_{\Sigma}:\widetilde{\Sigma}\to \Sigma$ is the universal Riemannian covering map and $G$ is  the map defined in Theorem \ref{proposition-reflectiongroups}.

 We must check that this construction is independent of the choice of trivial neighborhoods and local sections.
Let $y$ be point  contained in the leaf $\widehat{L}_x$,  $\varphi$ a holonomy map such that $\varphi(x)=y$ and consider an isometry $\psi_{2}:U_{2}\to \sigma_{2}$ where $\sigma_{2}$ is a local section that contains $y$. Since $\widehat{M}/\widehat{\F}=\widetilde{\Sigma}/\Gamma$, there exists $\gamma\in \Gamma$ such that
$\gamma:U_{2}\to U_1$ and $\psi_{2}^{-1}\circ\varphi=\gamma^{-1}\circ\psi_{1}^{-1}.$ From this we conclude that
\begin{eqnarray*}
\pi_{\Sigma}\circ G\circ\pi\circ\psi_{2}^{-1}(y)&=&\pi_{\Sigma}\circ G\circ\pi\circ\psi_{2}^{-1}\circ\varphi(x)\\
  &=& \pi_{\Sigma}\circ G \circ\pi\circ \gamma^{-1}\circ\psi_{1}^{-1}(x)\\
  &=&\pi_{\Sigma}\circ G\circ \pi \circ \psi_{1}^{-1}(x)
\end{eqnarray*}
and hence $\widehat{H}$ is well defined. $\widehat{H}$ is smooth since $G\circ\pi$ is  smooth; recall Theorem \ref{proposition-reflectiongroups}.

Finally consider the map $H: M\to \Sigma$ such that $H\circ\rho =\widehat{H}$. From Theorem \ref{theorem-basic-forms} it follows that $H$ is smooth.

\end{proof}

In \cite{Lytchak} Lytchak proved that the leaf space of a variationally complete foliation on a simply connected Riemannian manifold $M$ is a good orbifold. Therefore we can conclude from the above result the next corollary.

\begin{corollary}
Polar foliations and variationally complete foliations on simply connected Riemannian manifolds are simple.
\end{corollary}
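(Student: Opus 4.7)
The plan is to reduce the corollary to the immediately preceding theorem, which asserts that any SRF on a simply connected $M$ whose leaf space is a good orbifold $\Sigma/W$ admits a smooth $H:M\to\Sigma$ realizing the leaves as level sets. Thus for each of the two classes under consideration (polar and variationally complete) I only need to verify two hypotheses: (i) $\F$ is infinitesimally polar, and (ii) $M/\F$ is a good orbifold, and then invoke the preceding theorem directly.

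For the polar case, infinitesimal polarity is immediate from the slice theorem (Theorem \ref{teo-slice-polar}(d)), which states that for a polar foliation the infinitesimal foliation $\hat\F(q)$ at each singular point is again polar. To obtain the good orbifold structure, I would recall the results about the Weyl group reviewed in Subsection \ref{subsection-weylgroup}: by Lytchak's theorem the leaves of a polar foliation on a simply connected manifold are closed and embedded, and then by the results of \cite{AlexToeben2,Alex5,Alex7} the leaf space satisfies $M/\F = \Sigma/W(\Sigma) = \widetilde\Sigma/\Gamma$, where $\widetilde\Sigma$ is the Riemannian universal cover of a section $\Sigma$ and $\Gamma$ is a reflection group on $\widetilde\Sigma$. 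This is exactly the good orbifold hypothesis, so the preceding theorem applies.

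For the variationally complete case, infinitesimal polarity is Theorem \ref{teovc} of Lytchak and Thorbergsson. The good orbifold hypothesis is supplied by Lytchak's theorem (cited in the sentence preceding the corollary) stating that the leaf space of a VC foliation on a simply connected Riemannian manifold is a good orbifold. Once these two ingredients are in place, the preceding theorem produces the desired smooth map $H:M\to\Sigma$.

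There is essentially no technical obstacle here, since both inputs are already available in the paper; the only point that requires a little care is to note explicitly that Lytchak's result on closedness/embeddedness of leaves in the polar case is what lets us identify $M/\F$ with $\Sigma/W(\Sigma)$ as a metric quotient (so that the ``good orbifold'' hypothesis of the preceding theorem is literally satisfied, not just up to some closure operation). Once this is verified, the corollary follows by a direct citation of the previous theorem applied in each case.
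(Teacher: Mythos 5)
Your proposal is correct and follows essentially the same route as the paper: the paper derives the corollary directly from the preceding theorem, citing Lytchak for the good-orbifold structure of $M/\F$ in the VC case and relying on the Weyl-group discussion of Subsection \ref{subsection-weylgroup} (closedness of leaves on simply connected $M$ and $M/\F=\Sigma/W(\Sigma)=\widetilde\Sigma/\Gamma$) in the polar case. The only superfluous step is your separate verification of infinitesimal polarity, which is not a hypothesis of the preceding theorem but is deduced there from the good-orbifold assumption via Proposition \ref{prop-global}.
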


%%%%%%%%%%%%%%%%%%%%%%%%%%%%%%%%%%%
%%%%%%%%%%%%%%%%%%%%%%%%%%%%%%%%%%
%%%%%%%%%%%%%%%%%%%%%%%%%%%%%%%%%%
\subsection{The general desingularization of SRF}

As observed by Lytchak \cite{Lytchak}, if a SRF $\F$ is not infinitesimally polar, then it does not admit a desingularization that preserves the transverse geometry.
Nevertheless, as we will see in this section, even in the general case a SRF $\F$ admits a desingularization that preserves ``part'' of the transverse geometry.
This fact  will allow us to prove that $M/\F$ is a  Gromov-Hausdorff limit of orbifolds $\widehat{M}_{n}/\widehat{\F}_{n}$, when $M$  and the leaves are compact.

\begin{theorem}[\cite{Alex6}]
\label{cor-epsilon-isometria}
Let $\F$ be a SRF  with compact leaves on a compact Riemannian manifold $(M,g)$.
 Then for each small positive number $\epsilon$ we can find a regular Riemannian foliation $\widehat{\F}_{\epsilon}$ on a compact Riemannian manifold $\widehat{M}_{\epsilon}$ and a desingularization map $\hat{\rho}_{\epsilon}:\widehat{M}_{\epsilon}\rightarrow M$  with the following property:
if $\hat{q},\hat{p}\in \widehat{M}_{\epsilon}$ then
\[ |d(L_{q},L_{p})-\hat{d}(\hat{L}_{\hat{q}},\hat{L}_{\hat{p}})|<\epsilon\]
where $p=\hat{\rho}_{\epsilon}(\hat{p})$ and $q=\hat{\rho}_{\epsilon}(\hat{q})$.
\end{theorem}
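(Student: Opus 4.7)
The plan is to carry out an iterated blow-up along minimal strata of $\F$, in the spirit of the resolutions already used in Theorems \ref{Th-Toeben} and \ref{thm:desing}, and to choose each blow-up radius small enough that the cumulative distortion of leaf distances remains below $\epsilon$. I would induct on the depth of the stratification of $(M,\F)$: when the depth is zero, $\F$ is already a regular Riemannian foliation and there is nothing to do; otherwise I strip off one minimal stratum $\minimalstratum$ per step.

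For one inductive step, fix a small $a>0$ so that the normal exponential map identifies $\{v\in\nu\minimalstratum:\|v\|<a\}$ with $\tub_a(\minimalstratum)$, and define $\widehat{M}$ by removing $\minimalstratum$ from $M$ and gluing in the cylinder $\nu^{1}\minimalstratum\times[0,a)$ along $\tub_a(\minimalstratum)\setminus\minimalstratum$ via $(v,t)\mapsto\exp^{\nu}(tv)$. The Homothetic Transformation Lemma (Proposition \ref{homothetic-lemma}) guarantees that the restriction of $\F$ to the sphere bundle of radius $t\in(0,a)$ is, up to the homothety $h_{t}$, a fixed singular foliation $\F^{\nu}$ on $\nu^{1}\minimalstratum$, so the blown-up foliation $\widehat{\F}$ is well defined by agreeing with $\F$ outside the collar and being the ``cylinder over $\F^{\nu}$'' inside. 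To give $\widehat{M}$ a Riemannian structure, I would first apply Theorem \ref{lemma-almost-product} to adjust $g$ on $\tub_a(\minimalstratum)$ so that slices and plaques are orthogonal and $\pi:\tub_a(\minimalstratum)\to\minimalstratum$ is a Riemannian submersion, then interpolate radially with the flat slice metric of Theorem \ref{lemma-metric-in-S} to obtain a metric $\hat{g}$ that extends smoothly across the new ``boundary'' $\nu^{1}\minimalstratum\times\{0\}$. That $(\widehat{M},\hat{g},\widehat{\F})$ is a SRF is verified via Remark \ref{rem-prop-metric-stratum}: the new strata are submanifolds and Riemannian by construction, sequences of plaques converge as required, and the transverse approximation condition (d) is provided by the ``sliding'' description in Proposition \ref{slidingalongStratum}. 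The depth strictly decreases because the minimal stratum of $\widehat{\F}$ comes from the minimal stratum of $\F^{\nu}$, which sits inside a stratum of strictly larger dimension in the original stratification.

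For the metric estimate, the key point is that outside the collar $\hat{\rho}$ is an isometry, while any minimizing path realising $d(L_p,L_q)$ that enters $\tub_a(\minimalstratum)$ can either be perturbed to avoid $\minimalstratum$ at an extra cost of order $a$, or traverses the tube by a segment of length $O(a)$ that lifts faithfully to the collar; a symmetric argument applied to short paths in $\widehat{M}$ yields $|d(L_p,L_q)-\hat d(\widehat L_{\hat p},\widehat L_{\hat q})|\leq C\,a$ with a constant $C$ depending only on $M$ and $\F$. Choosing the $i$-th blow-up radius smaller than $\epsilon/(C\cdot 2^{i})$ then keeps the total error under $\epsilon$ after the finitely many steps that reduce the depth to zero. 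The main obstacle I foresee is simultaneously enforcing the two requirements on the collar metric at each inductive step: it must both make $\widehat{\F}$ satisfy the criterion of Remark \ref{rem-prop-metric-stratum} \emph{and} perturb leaf distances by no more than $O(a)$, because the interpolation with the flat metric $g_{0}$ of Theorem \ref{lemma-metric-in-S} can in principle open up transverse shortcuts; showing that any such shortening is itself controlled by $a$ (uniformly in the point of $\minimalstratum$) is the most delicate ingredient of the argument.
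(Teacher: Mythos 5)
Your overall strategy --- iterated blow-up along the minimal strata, with the radius at each stage chosen so small that the cumulative distortion of leaf distances stays below $\epsilon$ --- is exactly the strategy of the paper (Theorem \ref{thm-Blowup-srf} followed by the concluding lemma of Section \ref{sec:desingularization}), and your outer bookkeeping (induction on the depth of the stratification, radii of size $\epsilon/(C2^{i})$, verification of the SRF property via Remark \ref{rem-prop-metric-stratum} and Proposition \ref{slidingalongStratum}) matches it. One smaller discrepancy: gluing in $\nu^{1}\minimalstratum\times[0,a)$ produces a manifold with boundary $\nu^{1}\minimalstratum\times\{0\}$, whereas the statement requires a compact Riemannian manifold; the paper uses the projective blow-up, replacing $\minimalstratum$ by $\mathbb{P}(\nu\minimalstratum)$ (Lemma \ref{lemma-projectiveblow-usp}), which is closed. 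This is repairable by identifying antipodal normal directions, but as written it is not what you constructed.

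The genuine gap is the collar metric. ``Interpolating radially with the flat slice metric $g_{0}$'' does not yield a metric on the blow-up at all: the directions tangent to the normal spheres $\partial\tub_{t}(\minimalstratum)$ have length of order $t$ as $t\to0$, so the pullback of any such interpolated metric degenerates on the exceptional divisor. The paper's construction instead rescales precisely those directions (the summand $H^{2}$ in \eqref{def-metrica-hatgM}) by the factor $r^{2}/\|\xi\|^{2}$. This single device does both jobs at once: it makes $\hat{g}$ extend nondegenerately over $\hat{\minimalstratum}=\mathbb{P}(\nu\minimalstratum)$ (the exceptional fibers acquire a round metric of radius $r$), and, since $\|\xi\|\le r$ forces the factor to be $\ge1$, it makes the blow-down map length-non-increasing on horizontal curves --- which is exactly the inequality $l(\rho_{r}(\hat{\gamma}^{\epsilon}))\le\hat{l}_{r}(\hat{\gamma}^{\epsilon})$ that drives the distance estimate, together with items (c) and (d) of Theorem \ref{thm-Blowup-srf} (geodesics orthogonal to $\minimalstratum$ and horizontal geodesics inside $\minimalstratum$ are preserved). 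You correctly flag the possibility of ``transverse shortcuts'' as the delicate point, but the resolution is not to bound an a posteriori shortening by $O(a)$; it is to build the metric so that no shortening occurs, after which both one-sided comparisons of $d$ and $\hat{d}$ follow by perturbing minimizing horizontal curves at a cost of order $r$. With the metric as you describe it, the claimed uniform bound $|d-\hat{d}|\le Ca$ is unsupported.
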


The above result implies  that $M/\F$ is a limit of a sequence of Riemannian orbifolds  with respect to the Gromov-Haudorff distance, a concept that we now recall.

Let $X$ be a metric space and $A, B\subset X$. Then we define the \emph{Hausdorff distance} between $A$ and $B$ as  $d_{H}(A,B)=\inf\{ \epsilon\mid A\subset \tub_{\epsilon}(B), B\subset\tub_{\epsilon}(A)\}$. For two metric spaces $X$ and $Y$ an \emph{admissible metric} on the disjoint union $X\sqcup Y$
is a metric that extends the given metrics on $X$ and $Y$. With this we can define the \emph{Gromov-Hausdorff distance} as
$$ d_{G-H}(X,Y):=\inf\{d_{H}(X,Y)\mid \mathrm{admissible\, metrics\, on\, }  X\sqcup Y\} .$$
It is possible to prove that if $X$ and $Y$ are compact metric spaces then $X$ and $Y$ are isometric if and only if $d_{G-H}(X,Y)=0$.
Since $d_{G-H}$ is symmetric and satisfies the triangle inequality, the collection of compact metric spaces $(\mathcal{M},d_{G-H})$ turns out to be a pseudometric space, and if we pass to equivalence classes of isometric spaces it becomes a metric space. In fact, it is possible to prove that this metric space is complete and separable; see Petersen \cite{petersen}.

The  last theorem and basic results about Gromov-Hausdorff distance (see \cite[p. 295]{petersen}) imply the next corollary.

\begin{corollary}[\cite{Alex6}]
\label{cor-gromov-hausdorff-convergence}
Let $\F$ be a SRF  with compact leaves on a compact Riemannian manifold $(M,g)$.
 Then for each small positive number $\epsilon$ we can find a regular Riemannian foliation $\hat{\F}$ with compact leaves on a compact Riemannian manifold $\hat{M}$ so that
$d_{G-H}(M/\F, \hat{M}/\hat{\F})<\epsilon$. In particular, $M/\F$ is a Gromov-Hausdorff limit of a sequence of Riemannian orbifolds $\{(\hat{M}_{n}/\hat{\F}_{n})\}.$
\end{corollary}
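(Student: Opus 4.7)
The plan is to deduce this corollary as a direct translation of Theorem \ref{cor-epsilon-isometria} into the language of Gromov--Hausdorff convergence, using the fact that the leaf space of a regular Riemannian foliation with compact leaves on a compact manifold is automatically a Riemannian orbifold.

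First, for a fixed small $\epsilon>0$, I would apply Theorem \ref{cor-epsilon-isometria} to obtain a regular Riemannian foliation $\widehat\F_\epsilon$ on a compact Riemannian manifold $\widehat M_\epsilon$ together with a foliated surjection $\hat\rho_\epsilon:\widehat M_\epsilon\to M$ such that transverse distances are preserved up to $\epsilon$. Since $\hat\rho_\epsilon$ maps $\widehat\F_\epsilon$-leaves to $\F$-leaves, it descends to a well-defined, surjective map on leaf spaces $\bar\rho_\epsilon:\widehat M_\epsilon/\widehat\F_\epsilon\to M/\F$. The estimate in Theorem \ref{cor-epsilon-isometria} translates exactly to the statement that $\bar\rho_\epsilon$ is an $\epsilon$-isometry, i.e.\ $|d(\bar\rho_\epsilon(\hat x),\bar\rho_\epsilon(\hat y))-\hat d(\hat x,\hat y)|<\epsilon$ for all $\hat x,\hat y\in \widehat M_\epsilon/\widehat\F_\epsilon$.

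Next, I would convert this $\epsilon$-isometry into a Gromov--Hausdorff bound by exhibiting an admissible metric on the disjoint union $X:=(\widehat M_\epsilon/\widehat\F_\epsilon)\sqcup (M/\F)$. The natural choice is
\[
 d(\hat x,y):=\inf_{\hat z}\bigl(\hat d(\hat x,\hat z)+d(\bar\rho_\epsilon(\hat z),y)\bigr)+\tfrac{\epsilon}{2},
\]
extending the given intrinsic metrics on each factor. A routine check shows $d$ is an admissible metric and, using that $\bar\rho_\epsilon$ is surjective and $\epsilon$-isometric, that each side of $X$ is contained in the $\epsilon$-neighborhood of the other. Hence $d_{H}(\widehat M_\epsilon/\widehat\F_\epsilon,M/\F)\leq\epsilon$, and consequently $d_{G\textrm{-}H}(M/\F,\widehat M_\epsilon/\widehat\F_\epsilon)<\epsilon$ (perhaps after absorbing a harmless constant into $\epsilon$ from the start, which is what the statement allows by ``small positive number $\epsilon$'').

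For the ``in particular'' assertion I would choose a sequence $\epsilon_n\to 0$ and apply the first part to each $\epsilon_n$, obtaining foliations $(\widehat M_n,\widehat\F_n)$ whose leaf spaces Gromov--Hausdorff converge to $M/\F$. The point to verify is that each $\widehat M_n/\widehat\F_n$ is a Riemannian orbifold: since $\widehat\F_n$ is a regular Riemannian foliation with compact leaves on a compact manifold, it is trivially infinitesimally polar, so Proposition \ref{prop-global} yields that the quotient is a Riemannian orbifold.

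The only substantive step is the passage from the leafwise estimate of Theorem \ref{cor-epsilon-isometria} to an honest admissible metric on the disjoint union, and this is standard Gromov--Hausdorff bookkeeping; the real work is entirely contained in Theorem \ref{cor-epsilon-isometria}. Thus I do not expect a genuine obstacle, only the care required to ensure that $\bar\rho_\epsilon$ is well defined and surjective (both immediate from the fact that $\hat\rho_\epsilon$ is a surjective foliated map).
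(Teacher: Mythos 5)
Your proposal is correct and follows essentially the same route as the paper, which derives the corollary directly from Theorem \ref{cor-epsilon-isometria} together with standard facts about the Gromov--Hausdorff distance (the paper cites Petersen for exactly the $\epsilon$-isometry bookkeeping you spell out). Your explicit admissible metric on the disjoint union and the observation that each $\widehat{M}_n/\widehat{\F}_n$ is a Riemannian orbifold because a regular Riemannian foliation is trivially infinitesimally polar are just the details the paper leaves implicit.
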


\begin{remark}
Theorem \ref{cor-epsilon-isometria} provides more information than the above corollary, because, apart from the existence of a 1-parameter family of orbifolds
$\widehat{M}_{\epsilon}/\widehat{\F}_{\epsilon}$ that converges to the Alexandrov space $M/\F$, it also guarantees the existence of a 1-parameter family of continuous maps,
$\hat{\rho}_{\epsilon}:\widehat{M}_{\epsilon}/\widehat{\F}_{\epsilon} \to M/\F$,  that turns out to be \emph{smooth in the sense of Schwarz} \cite{Schwarz}, i.e., the pull-back of smooth basic functions are smooth basic functions. This suggests that  Theorem \ref{cor-epsilon-isometria} could be useful  in  studying the smooth structure of $M/\F$. For more remarks about the smooth structure of $M/\F$ see \cite{AlexLytchak,AlexRadeschi}.
\end{remark}

Before we sketch the proof of Theorem \ref{cor-epsilon-isometria} we need to look closer at the desingularization map $\rho$ that turns out to be a composition of  blow-ups along the strata.

We start by recalling the definition of a blow-up along a submanifold.
\begin{lemma}
\label{lemma-projectiveblow-usp}
Let $\tub_{r}(\minimalstratum)$ be a small neighborhood of the minimal stratum $\minimalstratum$. Then
\begin{enumerate}
\item[(a)] $\hat{N}:=\{(x,[\xi])\in \tub_{r}(\minimalstratum)\times \mathbb{P}(\nu \minimalstratum)| x= \exp^{\perp}(t\xi)\ \mbox{for some}\ |t|<r\}$ is a smooth manifold (called blow-up  of $\tub_{r}(\minimalstratum)$  along $\minimalstratum$) and $\hat{\pi}:\hat{N}\rightarrow \tub_{r}(\minimalstratum)$, defined as $\hat{\pi}(x,[\xi])=x$ is also smooth.
\item[(b)]$\hat{\minimalstratum}:=\hat{\pi}^{-1}(\minimalstratum)=\{(\rho([\xi]),[\xi])\in \hat{N}\}=\mathbb{P}(\nu\minimalstratum)$, where $\rho: \mathbb{P}(\nu \minimalstratum)\rightarrow \minimalstratum$ is the canonical projection.
\item[(c)] There exists a  singular foliation $\hat{\F}$ on $\hat{N}$ so that  $\hat{\pi}: (\hat{N}-\hat{\minimalstratum}, \hat{\F})\rightarrow (\tub_{r}(\minimalstratum)-\minimalstratum, \F)$ is a foliated diffeomorphism.
\end{enumerate}
\end{lemma}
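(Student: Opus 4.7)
The plan is to view $\hat{N}$ as the real projective blow-up of $\tub_{r}(\minimalstratum)$ along the submanifold $\minimalstratum$, transported to $M$ via the normal exponential map; parts (a) and (b) then reduce to the classical construction, and the bulk of the work goes into defining $\hat{\F}$ so that (c) holds.

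For (a), I would first identify $\tub_{r}(\minimalstratum)$ with an open neighborhood of the zero section of $\nu \minimalstratum$ via $\exp^{\perp}$. Covering $\minimalstratum$ by open sets $U$ over which $\nu \minimalstratum$ trivializes as $U \times \RR^k$, on each such neighborhood $\hat{N}$ corresponds to the incidence variety $\{(y,v,[\xi]) \in U \times \RR^k \times \mathbb{P}^{k-1} : v \in \mathrm{span}(\xi)\}$, which is a smooth $(\dim M)$-dimensional submanifold with the standard affine charts on $\mathbb{P}^{k-1}$. Linearity of the transition functions of $\nu \minimalstratum$ on fibers induces smooth projectivized transitions on the $\mathbb{P}^{k-1}$-factor, so these local pieces patch to give a global smooth structure on $\hat{N}$. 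In these charts $\hat{\pi}$ is simply the projection $(y,v,[\xi]) \mapsto (y,v)$, hence smooth. For (b), $x \in \minimalstratum$ corresponds to $t = 0$ in the defining relation $x = \exp^{\perp}(t\xi)$, and the value $t=0$ is admissible for every direction $[\xi] \in \mathbb{P}(\nu_x \minimalstratum)$; hence $\hat{\pi}^{-1}(x) = \mathbb{P}(\nu_x \minimalstratum)$ and globally $\hat{\minimalstratum} = \mathbb{P}(\nu \minimalstratum)$.

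For (c), on $\hat{N} - \hat{\minimalstratum}$ I would define $\hat{\F}$ as the pullback of $\F$ under the diffeomorphism $\hat{\pi}|_{\hat{N}-\hat{\minimalstratum}}$; this automatically makes $\hat{\pi}$ a foliated diffeomorphism there. The content lies in extending $\hat{\F}$ across $\hat{\minimalstratum}$ so that the whole partition is a smooth singular foliation. Here I would use the fact, obtained from Proposition \ref{homothetic-lemma} combined with Proposition \ref{lemma-slice-fundamental}, that the infinitesimal foliation $\hat{\F}(x)$ on each fiber $\nu_x \minimalstratum$ is invariant under all homotheties $h_\lambda$; consequently every nontrivial leaf is a cone through the origin and descends to a well-defined submanifold of $\mathbb{P}(\nu_x \minimalstratum)$. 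These fiberwise projectivizations, combined with the leaf structure of $\F|_{\minimalstratum}$, give the natural candidate for $\hat{\F}|_{\hat{\minimalstratum}}$.

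The main obstacle will be verifying smoothness of the singular foliation across the exceptional divisor: one must show that every $X \in \singularF$ lifts via $\hat{\pi}$ to a smooth tangential vector field on $\hat{N}$ (typically vanishing on $\hat{\minimalstratum}$), and that the resulting module acts transitively on each leaf of $\hat{\F}$. To handle this, I would first change the metric using Theorem \ref{lemma-almost-product} and Theorem \ref{lemma-metric-in-S} so that, locally near $\minimalstratum$, the foliation splits as a plaque direction along $\minimalstratum$ together with the homothetically invariant infinitesimal foliation on slices. In this adapted picture, an element of $\singularF$ decomposes into a piece tangent to $\minimalstratum$ (which lifts trivially) and a radial piece whose vanishing along $\minimalstratum$ is exactly of the order that the projective blow-up resolves; division by the exceptional factor in the standard blow-up charts then produces the required smooth lift, so that the partition is indeed a singular foliation on $\hat{N}$.
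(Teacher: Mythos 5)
The paper does not actually prove Lemma \ref{lemma-projectiveblow-usp}: it is stated as a recollection of the standard projective blow-up, with the foliation $\hat{\F}$ only indicated later (``in analogy to the blow-up of isometric actions, see \cite{Duistermaat}''); the details are in \cite{Alex6}. Your outline follows exactly that standard route --- incidence-variety charts for (a) and (b), lifting of the module $\singularF$ for (c) --- and is essentially correct, but two assertions need repair. First, the leaves of the infinitesimal foliation on $\nu_x\minimalstratum$ are \emph{not} cones through the origin (concentric spheres, as in the linearization of an isometric $\mathbb{S}^1$- or $SO(n)$-action at a fixed point, already refute this). What Propositions \ref{homothetic-lemma} and \ref{lemma-slice-fundamental} give is that $h_\lambda$ \emph{permutes} the leaves; the relevant consequence is that the projectivized images of leaves form a partition of $\mathbb{P}(\nu_x\minimalstratum)$, because if $[v]$ lies in the image of $L_1$ and of $L_2$, say $t_1v\in L_1$ and $t_2v\in L_2$, then $L_2=h_{t_2/t_1}(L_1)$ and the two images coincide. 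It is this, not conicity of individual leaves, that makes your candidate for $\hat{\F}|_{\hat{\minimalstratum}}$ well defined.

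Second, the parenthetical claim that the lifts of fields in $\singularF$ ``typically vanish on $\hat{\minimalstratum}$'' is wrong, and it contradicts your own description of $\hat{\F}|_{\hat{\minimalstratum}}$: since $\minimalstratum$ is saturated, each $X\in\singularF$ is tangent to $\minimalstratum$, its flow preserves $\minimalstratum$ and therefore acts on $\mathbb{P}(\nu\minimalstratum)$, and the restriction of the lift $\hat{X}$ to $\hat{\minimalstratum}$ is the (generally nonvanishing) generator of this projectivized linearized flow. Indeed these restrictions must be nonzero in order to sweep out the leaves you describe (projectivized infinitesimal leaves over plaques of $\F|_{\minimalstratum}$), which is what makes the extended partition a singular foliation. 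What does vanish along $\minimalstratum$ is only the fiber component: writing $X=\sum_i a_i\partial_{y_i}+\sum_j b_j\partial_{v_j}$ in a trivialization with $b_j(y,0)=0$, Hadamard's lemma gives $b_j=\sum_k v_kc_{jk}$, and the factor of $t$ in the chart $v=t\,\xi(u)$ cancels the $1/t$ arising from $\partial_{v_j}$ in the angular directions, yielding a smooth lift --- that part of your argument is the right one. Finally, note that the metric changes of Theorems \ref{lemma-almost-product} and \ref{lemma-metric-in-S} are not needed for this lemma, which is purely differential-topological; tangency of $\singularF$ to $\minimalstratum$ and the homothetic transformation lemma suffice.
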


Now we can discuss how the desingularization map is constructed and understand some of its properties that will be used in the proof of Thereom \ref{cor-epsilon-isometria}. Note that in the result below we do not need to assume that the leaves are compact.

%%%%%%%%%%%%%%%%%%%%%%%%%%%%%%%%%%%%%%%%%%%%%5
\begin{theorem}[\cite{Alex6}]
\label{thm-Blowup-srf}
Let $\F$ be a SRF of a compact Riemannian manifold $(M,g)$, $\minimalstratum$ the minimal stratum of $\F$ (with leaves of dimension $k_{0}$) and $\tub_{r}(\minimalstratum)$ the distance tube of $\minimalstratum$ of radius $r$.
Then, by blowing up $M$ along $\minimalstratum,$ we have a SRF $\hat{\F}_{r}$ (with leaves of dimension greater then $k_{0}$) on a compact Riemannian manifold $(\hat{M}_{r}(\minimalstratum),\hat{g}_{r})$  and a  map $\rho_{r}:\hat{M}_{r}(\minimalstratum)\rightarrow M$ with the following properties:
\begin{enumerate}
\item[(a)] $\rho_{r}$ projects each leaf of $\hat{\F}_{r}$ onto a leaf of $\F$.
\item[(b)]Set $\hat{\minimalstratum}:=\rho_{r}^{-1}(\minimalstratum)$. Then  $\rho_{r}:(\hat{M}_{r}(\minimalstratum)-\hat{\minimalstratum},\hat{\F}_{r})\rightarrow (M-\minimalstratum,\F)$ is a foliated diffeomorphism and $\rho_{r}:\hat{M}_{r}(\minimalstratum)-\tub_{r}(\hat{\minimalstratum})\rightarrow M-\tub_{r}(\minimalstratum)$ is an isometry
\end{enumerate}
\begin{enumerate}
\item[(c)]If a unit speed geodesic $\hat{\gamma}$ is orthogonal to $\hat{\minimalstratum}$, then $\rho_{r}(\hat{\gamma})$ is a unit speed geodesic orthogonal to $\minimalstratum$.
\item[(d)]$\rho_{r}|_{\hat{\minimalstratum}}: (\hat{\minimalstratum},\hat{g}_{r})\rightarrow(\minimalstratum,g)$ is a Riemannian submersion. In addition $(\hat{\minimalstratum},\hat{\F}_{r}|_{\hat{\minimalstratum}},\hat{g}_{r})$ is a SRF and the lifts of horizontal geodesics of $(\minimalstratum,\F|_{\minimalstratum},g)$ are horizontal geodesics of $(\hat{\minimalstratum},\hat{\F}_{r}|_{\hat{\minimalstratum}},\hat{g}_{r}).$
\end{enumerate}
 Furthermore, by successive blow-ups of the minimal strata, we obtain a regular Riemannian foliation $\widehat{\F}$ on a compact Riemannian manifold $\widehat{M}$ and a desingularization map $\hat{\rho}:\widehat{M}\rightarrow M$ that projects each leaf $\widehat{L}$ of $\widehat{\F}$ onto a leaf $L$ of $\F.$
\end{theorem}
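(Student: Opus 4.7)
My plan is to build $\hat{M}_r$ and $\hat{\rho}_r$ out of the projective blow-up from Lemma \ref{lemma-projectiveblow-usp}, design the metric $\hat{g}_r$ so that the radial direction normal to $\hat{\minimalstratum}$ is preserved in length, and then verify the SRF property by invoking the abstract criterion in Remark \ref{rem-prop-metric-stratum}. Concretely, I would first fix an $r>0$ so small that the normal exponential map of $\minimalstratum$ is a diffeomorphism on vectors of norm $<2r$. Let $\hat{N}$ be the projective blow-up of $\tub_{2r}(\minimalstratum)$ along $\minimalstratum$ given by Lemma \ref{lemma-projectiveblow-usp}, with projection $\hat\pi:\hat N\to \tub_{2r}(\minimalstratum)$. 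Gluing $\hat N$ to $M\setminus \tub_{r}(\minimalstratum)$ via the diffeomorphism $\hat\pi$ on the overlap $\tub_{2r}(\minimalstratum)\setminus\overline{\tub_{r}(\minimalstratum)}$ produces the compact manifold $\hat{M}_r$ together with the map $\rho_r$ which is the identity outside $\hat\pi^{-1}(\tub_r(\minimalstratum))$ and equals $\hat\pi$ elsewhere. Using Fermi coordinates $(x,t,[\xi])$ on a neighborhood of $\hat{\minimalstratum}=\mathbb{P}(\nu\minimalstratum)$, where $t=\dist(\cdot,\hat{\minimalstratum})$ and $(x,[\xi])\in \hat{\minimalstratum}$, the metric pulled back through $\rho_r$ is the singular metric $dt^2+g|_{\minimalstratum}(x)+t^2 g_{\mathrm{sph}}(x,\xi)$. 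I would desingularize this by replacing the warping factor $t^2$ with a smooth positive function $\varphi(t)^2$ such that $\varphi\equiv 1$ near $t=0$ and $\varphi(t)=t$ for $t\geq r$, so that the resulting $\hat g_r$ is smooth everywhere, equals $\rho_r^*g$ outside $\tub_r(\hat{\minimalstratum})$ (giving the isometry in (b)), and makes the $t$-curves into unit speed geodesics normal to $\hat{\minimalstratum}$. Projecting these curves by $\rho_r$ recovers the radial normal geodesics of $\minimalstratum$ at unit speed, which gives (c), and the fiberwise formula for $\hat g_r$ immediately makes $\rho_r|_{\hat{\minimalstratum}}:(\hat{\minimalstratum},\hat g_r)\to(\minimalstratum,g)$ a Riemannian submersion, the first claim of (d).

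Next, I would define the foliation $\hat{\F}_r$. Outside $\hat{\minimalstratum}$ it is simply $\rho_r^{-1}(\F)$, which is a foliation because $\rho_r$ is a diffeomorphism there. On $\hat{\minimalstratum}$, note that each fiber $\mathbb{P}(\nu_x\minimalstratum)$ carries the image of the infinitesimal foliation $\hat\F(x)$ under the antipodal identification, which descends because $\hat\F(x)$ is invariant under $h_{-1}$ by Proposition \ref{lemma-slice-fundamental}. Sliding these fiberwise foliations along the plaques of $\F|_{\minimalstratum}$ via Proposition \ref{slidingalongStratum} yields a well-defined partition $\hat{\F}_r|_{\hat{\minimalstratum}}$ of $\hat{\minimalstratum}$ whose plaques are compatible with the plaques of $\F|_{\minimalstratum}$ via the submersion $\rho_r|_{\hat{\minimalstratum}}$. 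Together with the diffeomorphism part, this defines $\hat{\F}_r$ and makes (a) obvious from the construction. The submersion identity together with the fact that Bott-equifocality (Theorem \ref{thm-s.r.f.-equifocal}) respects parallel transport then gives the remaining statements of (d): the restriction $\hat{\F}_r|_{\hat{\minimalstratum}}$ is a SRF and horizontal geodesics lift to horizontal geodesics.

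The technical heart of the proof is the verification that $\hat{\F}_r$ is a SRF on all of $(\hat M_r,\hat g_r)$, not just on the complement of $\hat{\minimalstratum}$. Here I would not attempt a direct geodesic computation but instead apply Proposition \ref{prop-metric-stratum} via the criterion of Remark \ref{rem-prop-metric-stratum}. Conditions (a), (b), and (c) of the remark follow directly from the construction: the strata of $\hat{\F}_r$ are manifolds inherited from the strata of $\F$ and the strata of the fiberwise infinitesimal foliations; plaque continuity is inherited from $\F$ via $\rho_r$; and the Riemannian property on each stratum is either transported from $\F$ (off $\hat{\minimalstratum}$) or comes from the submersion property combined with the fact that on $\hat{\minimalstratum}$ the foliation is the standard product-like assembly of two Riemannian foliations. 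The delicate point is (d): for a vector $v$ not tangent to $\hat{\minimalstratum}$, I need a curve $\beta$ leaving $\hat{\minimalstratum}$ transversally and a distribution $t\mapsto V_t$ containing the tangent spaces of the plaques along $\beta$ with $v\perp V_0$. The natural choice is to pick $\beta$ running along the radial $t$-direction and to take $V_t$ to be the Wilking-type Jacobi distribution from Remark \ref{remark-wilking-distribution} applied to radial geodesics, which by Bott-equifocality equals the tangent space of the leaves at regular times and converges at $t\to0$ to a subspace of $T\hat{\minimalstratum}$ orthogonal to $v$ by the cylindrical choice of $\hat g_r$ near $\hat{\minimalstratum}$.

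Finally, iterating this construction gives the last sentence. After one blow-up the minimal stratum of $\hat{\F}_r$ has leaves of dimension strictly greater than $k_0$, because the new minimal stratum either sits inside $\hat{\minimalstratum}$ (where leaves are submersion-preimages of $\minimalstratum$-leaves and thus of dimension $k_0+\dim(\text{fiber of }\hat\F(x)\text{ through }[\xi])>k_0$) or lives off $\hat{\minimalstratum}$ with dimension equal to that of the next stratum of $\F$. Since $M$ has finitely many strata and leaf dimensions are bounded by $\dim M$, finitely many blow-ups produce a manifold $\widehat M$ on which the only stratum is the regular one, i.e., $\widehat{\F}$ is a regular Riemannian foliation. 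The composition of the blow-up maps is the desired desingularization $\hat\rho:\widehat M\to M$. The hardest step, and the one where any mistake would show up, is the transition to the boundary $\hat{\minimalstratum}$ in the construction of $\hat g_r$ and the verification of condition (d) of Remark \ref{rem-prop-metric-stratum}, because this is where the smoothness of $\hat g_r$, the orthogonality of the radial geodesics, and the SRF property of $\hat{\F}_r$ all have to be made compatible simultaneously.
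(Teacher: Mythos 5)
Your overall strategy coincides with the paper's: projective blow-up via Lemma \ref{lemma-projectiveblow-usp}, a cylindrical modification of the metric near $\minimalstratum$ so that it extends smoothly to $\hat{\minimalstratum}$ and agrees with $g$ outside $\tub_r(\minimalstratum)$, verification of the SRF property through Proposition \ref{prop-metric-stratum} and the criterion of Remark \ref{rem-prop-metric-stratum}, and iteration over the finitely many strata. There is, however, one genuine gap at exactly the place you flag as the hardest step. You write the metric near $\minimalstratum$ in Fermi coordinates as $dt^2+g|_{\minimalstratum}(x)+t^2 g_{\mathrm{sph}}(x,\xi)$ and then replace $t^2$ by $\varphi(t)^2$. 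For a general SRF the original metric $g$ has no such warped-product form, and, more importantly, the plaques of $\F$ near $\minimalstratum$ are in no way adapted to this coordinate splitting: their normal spaces need not decompose into a radial part, a spherical part tangent to the normal slice, and a part tangent to the stratum. The paper's proof therefore begins with a preliminary step you omit entirely (Lemma \ref{lemma1-thm-Blowup-srf}): the construction of an auxiliary metric $\tilde g$ on $\tub_r(\minimalstratum)$, unchanged on $\minimalstratum$ and with the same transverse distance and the same normal geodesics, together with a smooth distribution $H$ (spanned by the tangent spaces of the slices $S_x$) containing the normal spaces of \emph{all} plaques. Only after this adaptation does the decomposition $H^\perp\oplus H^1\oplus H^2\oplus H^3$ make sense, and only the $H^2$-component (the spherical directions inside a single normal slice $T_q$, not all directions transverse to the radial one) is rescaled, by the factor $r^2/\|\xi\|^2$. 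Without $\tilde g$ and $H$, rescaling "the spherical part" is not even a well-defined operation relative to the foliation, and there is no reason the modified metric should keep the plaques equidistant or keep horizontal geodesics orthogonal to the leaves. This is precisely the compatibility you say must be checked but do not actually establish.

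Two smaller divergences are worth noting but are not fatal. For condition (d) of Remark \ref{rem-prop-metric-stratum} you invoke the Wilking-type distribution of Remark \ref{remark-wilking-distribution} along radial geodesics; the paper instead uses Proposition \ref{slidingalongStratum} (the plaques "sliding along the stratum") combined with Lemma \ref{lemma1-thm-Blowup-srf}. Your route is plausible but would need an argument that the limit $V_0$ of the Jacobi distribution is orthogonal to the given vector $v$, which again hinges on the adapted structure provided by $\tilde g$ and $H$. Finally, you interpolate the metric by a warping function $\varphi$, whereas the paper glues $\hat g$ and $g$ with a partition of unity constant on the cylinders $\partial\tub_\epsilon(\hat{\minimalstratum})$; both achieve the isometry in (b), but the paper's choice of the constant $r^2$ in the rescaled spherical directions (rather than your $\varphi\equiv 1$) is what later makes the $\epsilon$-estimates of Theorem \ref{cor-epsilon-isometria} work, as the errata remark emphasizes.
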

\begin{proof}

We will briefly sketch the main ideas behind the proof.
The construction of the metric $\hat{g}_{r}$ on the blow-up space $\hat{M}_{r}(\minimalstratum)$ will require several steps.

The first step is the construction of a metric $\tilde{g}$ and a distribution $H$ on a neighborhood of $\minimalstratum$ with special properties.

\begin{lemma}
\label{lemma1-thm-Blowup-srf}
There exists  a neighborhood  $\tub_{r}(\minimalstratum)$ of $\minimalstratum$   and a metric  $\tilde{g}$ on $\tub_{r}(\minimalstratum)$   such that:
\begin{enumerate}
\item[(a)] $\F|\tub_{r}(\minimalstratum)$ is a Riemannian foliation with respect to $\tilde{g}$
and the associated transverse distance is not changed.
\item[(b)] If a curve $\gamma$ is a unit speed geodesic segment orthogonal to $\minimalstratum$ with respect to the original metric $g$, then
$\gamma$ is a unit speed geodesic segment orthogonal to $\minimalstratum$ with respect to the new metric $\tilde{g}$.
\item[(c)] There exists a smooth distribution $H$, whose dimension is equal to the codimension of the leaves in $\minimalstratum$, such that the normal space  of each plaque of $\F|_{\tub_{r}(\minimalstratum)}$ (with respect to $\tilde{g}$) is contained in $H.$
\item[(d)] The restriction of the metric $\tilde{g}$ to the stratum $\minimalstratum$ coincides with the restriction of the original metric $g$ to $\minimalstratum.$
\end{enumerate}
\end{lemma}
We remark that the distribution of the above lemma is defined as $H_{\exp(\xi)}:=T_{\exp(\xi)}S_{x}$ where $\xi\in \nu_{x}\minimalstratum$ and $S_{x}$ is a slice of $L_{x}$  at $x$  with respect to the original metric $g$.

We now come to the second step of our construction, in which we change  the metric $\tilde{g}$ in
some directions, getting a new metric $\hat{g}^{M}$ on $\tub_{r}(\minimalstratum)-\minimalstratum$.

First note that, for small $\xi\in \nu_{q}\minimalstratum,$ we can decompose $T_{\exp_{q}(\xi)}M$ as a direct sum of orthogonal subspaces (with respect to the metric $\tilde{g}$) %as follows:

\[T_{\exp_{q}(\xi)}M= H_{\exp_{q}(\xi)}^{\perp}\oplus H_{\exp_{q}(\xi)}^{1}\oplus H_{\exp_{q}(\xi)}^{2}\oplus H_{\exp_{q}(\xi)}^{3},\]
where $H_{\exp_{q}(\xi)}^{\perp}$ is orthogonal to $H_{\exp_{q}(\xi)}$ and  $H_{\exp_{q}(\xi)}^{i}\subset H_{\exp_{q}(\xi)}$ is defined below.
\begin{enumerate}
\item $H_{\exp_{q}(\xi)}^{1}$ is the line generated by $\frac{d}{d t}\exp_{q}(t\xi)|_{t=1}$.
\item Set $T_{q}=\exp_{q}(\nu(\minimalstratum)\cap B_{\epsilon}(0))$. Then $H_{\exp_{q}(\xi)}^{2}$ is the orthogonal complement of  $H_{\exp_{q}(\xi)}^{1}$ in $T_{\exp_{q}(\xi)}T_{q}$.
\item $H_{\exp_{q}(\xi)}^{3}$ is the orthogonal complement of $T_{\exp_{q}(\xi)}T_{q}$ in $H_{\exp_{q}(\xi)}$.
\end{enumerate}

Now we define a new metric $\hat{g}^{M}$ on
$\tub_{r}(\minimalstratum)-\minimalstratum$ as follows:

\begin{equation}
\label{def-metrica-hatgM}
\hat{g}^{M}_{\exp_{q}(\xi)}(V,W):= \tilde{g}(V_{\perp},W_{\perp})+ \tilde{g}(V_{1},W_{1})
                                +\frac{r^{2}}{\|\xi\|^{2}} \tilde{g}(V_{2},W_{2})+  \tilde{g}(V_{3},W_{3}),
\end{equation}
where $V_{i},W_{i}\in H_{\exp_{q}(\xi)}^{i}$ and  $V_{\perp},W_{\perp}\in H_{\exp_{q}(\xi)}^{\perp}.$

One can use Lemma \ref{lemma1-thm-Blowup-srf}, Proposition \ref{prop-metric-stratum} and Proposition \ref{slidingalongStratum}  to prove

\begin{lemma}
$\F$ is a SRF on $\tub_{r}(\minimalstratum)-\minimalstratum$ with respect to $\hat{g}^{M}.$ In addition, if $\gamma:[0,a]\rightarrow \tub_{r}(\minimalstratum)$ is a unit speed geodesic orthogonal to $\minimalstratum$ with respect to the original metric $g$, then $\gamma|_{(0,a]}$ is a unit speed  geodesic with respect to $\hat{g}^{M}.$
\end{lemma}

In the third step of our construction, we pull back the metric $\hat{g}^M$ to the blow-up of $\tub_{r}(\minimalstratum)$ along $\minimalstratum$, denoted here by $\hat{N}$ (recall the notation of Lemma \ref{lemma-projectiveblow-usp}), and then prove that the induced foliation $\hat{\F}$ on $\hat{N}$ is a SRF with respect to this new metric.

Considering the pullback metric $\hat{g}:=\hat{\pi}^{*}\hat{g}^{M}$ and
using the Remark \ref{rem-prop-metric-stratum} and Proposition \ref{prop-metric-stratum} one can prove

\begin{lemma}
The metric $\hat{g}$ can be extended to $\hat{N}$  such that the following properties hold:
%Consider the  manifold $\hat{N}$ and the map $\hat{\pi}:\hat{N}\rightarrow \tub_{r}(\minimalstratum)$  defined above.
%Then there exists a metric $\hat{g}$ on $\hat{N}$
% with the following properties:
\begin{enumerate}
\item[(a)] if a unit speed geodesic $\hat{\gamma}$ is orthogonal to $\hat{\minimalstratum}$ with respect to $\hat{g}$, then $\hat{\pi}(\hat{\gamma})$ is a unit speed  geodesic orthogonal to $\minimalstratum$ with respect to the original metric $g$.
\item[(b)] $\hat{\pi}|_{\hat{\minimalstratum}}: (\hat{\minimalstratum},\hat{g})\rightarrow(\minimalstratum,g)$ is a Riemannian submersion.
\item[(c)] $(\hat{N},\hat{\F},\hat{g})$ is a SRF.
\item[(d)]$(\hat{\minimalstratum},\hat{\F}|_{\minimalstratum},\hat{g})$ is a SRF
and the lifts of horizontal geodesics of $(\minimalstratum,\F|_{\minimalstratum},g)$ are horizontal geodesics of $(\hat{\minimalstratum},\hat{\F}|_{\minimalstratum},\hat{g})$.
\end{enumerate}
\end{lemma}

In the last step of our construction, we   glue $\hat{N}$ with a copy of $M-\tub_{r}(\minimalstratum)$ and  construct the space
$\hat{M}_{r}(\minimalstratum)$ and  the projection $\rho_{r}:\hat{M}_{r}(\minimalstratum)\rightarrow M$.
A natural singular foliation  $\hat{\F}_{r}$ is induced on $\hat{M}_{r}(\minimalstratum)$ in analogy to the blow-up of isometric actions, see e.g. \cite{Duistermaat}.
To define the appropriate metric   $\hat{g}_{r}$ on  $\hat{M}_{r}(\minimalstratum)$ consider  a partition of unity of $\hat{M}_{r}(\minimalstratum)$ by two functions $\hat{f}$ and $\hat{h}$ such that
 \begin{enumerate}
 \item[(a)] $\hat{f}=1$ in $\tub_{r/2}(\hat{\minimalstratum})$ and $\hat{f}=0$ outside of $\tub_{r}(\hat{\minimalstratum})$.
 \item[(b)] $\hat{f}$ and $\hat{h}$ are constant on the cylinders  $\partial \tub_{\epsilon}(\hat{\minimalstratum})$ for $ \epsilon<2 r.$
 \end{enumerate}
 Set $\hat{g}_{r}:=\hat{f}\hat{g}+\hat{h} g$ and use the last lemma and Proposition \ref{prop-metric-stratum} to conclude the proof.

\end{proof}

%%%%%%%%%%%%%%%%%%%%%%%%%5

We are finally ready to sketch the proof of Theorem \ref{cor-epsilon-isometria} that will follow direct from the next lemma.
\begin{lemma}
Let  $\minimalstratum$ be the minimal stratum of $\F$. Consider the Riemannian manifold  $(\hat{M}_{r}(\minimalstratum),\hat{g}_{r})$   and  the foliation $\hat{\F}_{r}$ defined in Theorem  \ref{thm-Blowup-srf}. Then for each small $\epsilon>0$ there exists $r$ such that
 for each $\hat{q}, \hat{p}\in \hat{M}_{r}(\minimalstratum)$ we have
\[ |d(L_{q},L_{p})-\hat{d}_{r}(\hat{L}_{\hat{q}},\hat{L}_{\hat{p}})|<\epsilon\]
where $p:=\rho_{r}(\hat{p})$ and $q:=\rho_{r}(\hat{q}).$
\end{lemma}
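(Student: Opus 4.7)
The strategy is to exploit Theorem \ref{thm-Blowup-srf}(b), which says $\rho_r$ is an isometry from $\hat{M}_r\setminus\tub_r(\hat{\minimalstratum})$ onto $M\setminus\tub_r(\minimalstratum)$. I will show that minimizing horizontal curves realizing the leaf-to-leaf distances on either side can be modified, at a length cost of only $O(r)$, so as to stay in the region where $\rho_r$ is an isometry. Once this is accomplished, the isometric correspondence outside the tubes yields $|d(L_q,L_p)-\hat d_r(\hat L_{\hat q},\hat L_{\hat p})|=O(r)$, which is $<\epsilon$ for $r$ small enough.

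\textbf{Upper bound $\hat d_r\le d+\epsilon$.} Fix a horizontal minimizing curve $\gamma:[0,\ell]\to M$ realizing $d(L_q,L_p)$. Its preimage $\gamma^{-1}(\tub_r(\minimalstratum))$ is a finite union of intervals $(a_i,b_i)$, and the number $N$ of such intervals is uniformly bounded for small $r$ (each excursion traverses total radial length at least $2r$ while $\gamma$ has bounded length, by compactness of $M$). I would replace each $\gamma|_{[a_i,b_i]}$ by a detour consisting of a radial leg from $\gamma(a_i)$ out to $\partial\tub_r(\minimalstratum)$ of length $\le r$, a horizontal curve along $\partial\tub_r(\minimalstratum)$ joining the radial projections of $\gamma(a_i)$ and $\gamma(b_i)$ of length $\le \mathrm{length}(\gamma|_{[a_i,b_i]})+O(r)$ (by the near-isometry of the retraction $\tub_r(\minimalstratum)\to\partial\tub_r(\minimalstratum)$ in horizontal directions, for small $r$), and a radial leg back into $\gamma(b_i)$. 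After adjoining short radial connectors in case $p$ or $q$ lies inside the tube, the resulting curve stays outside $\tub_r(\minimalstratum)$ and lifts via $\rho_r^{-1}$ to a curve in $\hat M_r$ of length $\ell+O(Nr)$ joining $\hat L_{\hat q}$ to $\hat L_{\hat p}$. The lower bound $d\le \hat d_r+\epsilon$ is proved symmetrically: take a minimizer in $(\hat M_r,\hat g_r)$, detour around $\tub_r(\hat{\minimalstratum})$ by the analogous construction, and project by $\rho_r$. Here Theorem \ref{thm-Blowup-srf}(c)--(d) guarantees that unit-speed geodesics normal to $\hat{\minimalstratum}$ map isometrically to unit-speed geodesics normal to $\minimalstratum$, and that $\rho_r|_{\hat{\minimalstratum}}$ is a Riemannian submersion onto $\minimalstratum$, so horizontal motion along the exceptional stratum has matching length on both sides.

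\textbf{Main obstacle.} The delicate technical point is verifying the $O(r)$ bound on the detour length along $\partial\tub_r(\hat{\minimalstratum})$ on the $\hat M_r$-side, where the blow-up metric $\hat g^M$ contains the scaling factor $r^2/\|\xi\|^2$ on the $H^2$-direction, diverging as $\|\xi\|\to 0$. The key observation is that this rescaling does not affect the transverse (perpendicular-to-leaves) metric governing leaf-to-leaf distances, a fact encoded in the construction of Theorem \ref{thm-Blowup-srf} through the distribution $H$ together with the Riemannian-submersion property of $\rho_r|_{\hat{\minimalstratum}}$. Consequently the order-$r$ estimate is uniform, and compactness of $M$ and of $\minimalstratum$ guarantees that all the geometric constants (the bound $N$ on excursions, the retraction near-isometry constants, etc.) can be chosen uniformly in $r$ for $r$ sufficiently small.
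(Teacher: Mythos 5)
Your overall strategy (reduce to the region where $\rho_r$ is an isometry by detouring minimizers around the tube at cost $o(1)$) is different from the paper's, and as written it has two genuine gaps, both at the point where the actual content of the lemma lies. First, the counting of excursions: you claim the number $N$ of components of $\gamma^{-1}(\tub_r(\minimalstratum))$ is ``uniformly bounded'' because each excursion traverses radial length at least $2r$. That premise is false (an excursion may merely graze $\partial\tub_r(\minimalstratum)$, penetrating to depth $\delta\ll r$ and traversing radial length only $2\delta$), and even if it were true it would only give $N\leq \ell/(2r)$, so that your total error $N\cdot O(r)$ is $O(\ell)$, not $O(r)$ and certainly not $<\epsilon$. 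You need either a bound on $N$ independent of $r$ or a finer accounting in which the per-excursion costs sum to $o(1)$; neither is supplied.

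Second, and more seriously, the per-excursion estimate rests on the assertion that the radial retraction $\tub_r(\minimalstratum)\to\partial\tub_r(\minimalstratum)$ is a ``near-isometry in horizontal directions.'' It is not: in Fermi coordinates the map $\exp^{\perp}(t\xi)\mapsto \exp^{\perp}(r\xi)$ dilates the spherical normal directions by the factor $r/t$, which is unbounded as $t\to 0$, so the retraction of an excursion that sweeps spherical angle while close to $\minimalstratum$ can be much longer than the excursion itself. Bounding the total spherical angle swept by a minimizing horizontal geodesic inside the tube is exactly the delicate point, and it is exactly what the factor $r^{2}/\|\xi\|^{2}$ in the definition \eqref{def-metrica-hatgM} of $\hat{g}^{M}$ is built to control; your ``main obstacle'' paragraph appeals to that construction to justify the estimate, which makes the argument circular. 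Note that the paper avoids this issue altogether: it does not push the minimizer out to $\partial\tub_r(\minimalstratum)$ but instead perturbs it by at most $\epsilon$ into a curve on which $\rho_r$ is length-nonincreasing (because $r^{2}/\|\xi\|^{2}\geq 1$ in \eqref{def-metrica-hatgM}), using Theorem \ref{thm-Blowup-srf}(c) for the radial pieces and Theorem \ref{thm-Blowup-srf}(d) (the Riemannian-submersion property of $\rho_r|_{\hat{\minimalstratum}}$ and the lifting of horizontal geodesics of $\minimalstratum$) for the pieces travelling along the exceptional stratum. If you want to keep your route, you must prove the retraction estimate and the excursion count honestly; as it stands these are assertions, not proofs.
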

\begin{proof}
Due to (c) and (d) of Theorem \ref{thm-Blowup-srf} and equation \eqref{def-metrica-hatgM},
for a small $\epsilon$ there exists $r>0$ with the following properties:
if $\hat{\gamma}$ is a minimal horizontal geodesic that joins $\hat{L}_{\hat{p}}$ to $\hat{L}_{\hat{q}}$ there exists a horizontal  curve
$\hat{\gamma}^{\epsilon}$ in $\hat{M}_{r}(\minimalstratum)$ so that
\begin{enumerate}
\item $l(\rho_{r}(\hat{\gamma}^{\epsilon}))\leq \hat{l}_{r}(\hat{\gamma}^{\epsilon}),$
\item $|\hat{l}_{r}(\hat{\gamma}^{\epsilon})-\hat{l}_{r}(\hat{\gamma})|<\epsilon.$
\end{enumerate}

Therefore
\begin{eqnarray*}
d(L_{p},L_{q})&\leq & l(\rho_{r}(\hat{\gamma}^{\epsilon}))\\
              &\leq & \hat{l}_{r}(\hat{\gamma}^{\epsilon})\\
              &< & \hat{l}_{r}(\hat{\gamma})+\epsilon\\
              &= & \hat{d}_{r}(\hat{L}_{\hat{p}},\hat{L}_{\hat{q}})+\epsilon.
\end{eqnarray*}

In an analogous way, because of items (c) and (d) of Theorem \ref{thm-Blowup-srf} and equation \eqref{def-metrica-hatgM}, if
 $\gamma$ is a minimal horizontal geodesic that joins $L_{p}$ to $L_{q}$ there exists a curve  $\gamma^{\epsilon}$ in $M$ and a  curve
$\hat{\gamma}$ in $\hat{M}_{r}(\minimalstratum)$ that joins $\hat{L}_{\hat{p}}$ to $\hat{L}_{\hat{q}}$ such that
\begin{enumerate}
\item $\hat{l}_{r}(\hat{\gamma})\leq l(\gamma^{\epsilon})+\frac{\epsilon}{2}$
\item $|l(\gamma^{\epsilon})- l(\gamma)|<\frac{\epsilon}{2} $
\end{enumerate}
Therefore
\begin{eqnarray*}
\hat{d}_{r}(\hat{L}_{\hat{p}},\hat{L}_{\hat{q}}) & \leq & \hat{l}_{r}(\hat{\gamma})\\
                                                 & \leq & l(\gamma^{\epsilon})+\frac{\epsilon}{2}\\
                                                 &<& l(\gamma)+\epsilon\\
                                                 & = & d(L_{p},L_{q})+\epsilon.
\end{eqnarray*}
\end{proof}

%%%%%%%%%%%%%%%%5555

%\begin{definition}
%Let $X$ be a metric space and $A, B\subset X$. Then we define the \emph{Hausdorff distance} between $A$ and $B$ as  $d_{H}(A,B)=\inf\{ \epsilon: A\subset %\tub_{\epsilon}(B), B\subset\tub_{\epsilon}(A)\}$. Now consider two metric spaces $X$ and $Y$ then an \emph{admissible metric} on the disjoint union $X\sqcup Y$
%is a metric that extends the given metrics on $X$ and $Y$. With this we can define the \emph{Gromov-Hausdorff distance} as
%$$ d_{G-H}(X,Y):=\inf\{d_{H}(X,Y): \mathrm{admissible\, metrics\, on\, }  X\sqcup Y\} $$.

%\end{definition}
%It is possible to prove that if $X$ and $Y$ are compact metric spaces then $X$ and $Y$ are isometric if and only if $d_{G-H}(X,Y)=0$.
%Since $d_{G-H}$ is symmetric and satisfies the triangle inequality, the collection of compact metric spaces $(\mathcal{M},d_{G-H})$ turns out to be a %pseudometric space, and if we consider equivalence classes of isometric spaces it becomes a metric space. In fact, it is possible to prove that this metric space %is complete and separable.

\begin{remark}[errata]
Small corrections should be made in the proofs of \cite{Alex6}. As in  equation \eqref{def-metrica-hatgM}, the  equation (3) of  \cite{Alex6} should have the factor $r^{2}$. This is not important in the proof of Theorem \ref{thm-Blowup-srf} but plays a role in the proof of Theorem \ref{cor-epsilon-isometria}. In the proof of  Theorem \ref{cor-epsilon-isometria} one has
$l(\hat{\pi}_{r}(\hat{\gamma}^{\epsilon}))\leq \hat{l}_{r}(\hat{\gamma}^{\epsilon})$ (instead $l(\hat{\pi}_{r}(\hat{\gamma}^{\epsilon}))= \hat{l}_{r}(\hat{\gamma}^{\epsilon})$), and $\hat{l}_{r}(\hat{\gamma})\leq l(\gamma^{\epsilon})+\frac{\epsilon}{2}$
(instead $\hat{l}_{r}(\hat{\gamma})= l(\gamma^{\epsilon})$).  Apart from these misprints the proofs of \cite{Alex6} are correct.

\end{remark}

%The above corollary implies that $M/\F$ is a Gromov-Hausdorff limit of a sequence of Riemannian orbifolds $\{(\hat{M}_{n}/\hat{\F}_{n})\}.$

%\begin{remark}
%Let us recall the definition of Gromov-Hausdorff distance.
%Let $X$ be a metric space and $A, B\subset X$. Then we define the \emph{Hausdorff distance} between $A$ and $B$ as  $d_{H}(A,B)=\inf\{ \epsilon: A\subset %\tub_{\epsilon}(B), B\subset\tub_{\epsilon}(A)\}$. Now consider two metric spaces $X$ and $Y$ then an \emph{admissible metric} on the disjoint union $X\sqcup Y$
%is a metric that extends the given metrics on $X$ and $Y$. With this we can define the \emph{Gromov-Hausdorff distance} as
%$$ d_{G-H}(X,Y):=\inf\{d_{H}(X,Y): \mathrm{admissible\, metrics\, on\, }  X\sqcup Y\} $$.
%It is possible to prove that if $X$ and $Y$ are compact metric spaces then $X$ and $Y$ are isometric if and only if $d_{G-H}(X,Y)=0$.
%Since $d_{G-H}$ is symmetric and satisfies the triangle inequality, the collection of compact metric spaces $(\mathcal{M},d_{G-H})$ turns out to be a %pseudometric space, and if we consider equivalence classes of isometric spaces it becomes a metric space. In fact, it is possible to prove that this metric space %is complete and separable.
%\end{remark}

%%%%%%%%%%%%%%%%%%%%%%%%%%%%%%%%%%%%%%%%%%%%%%%%%
%%%%%%%%%%%%%%%%%%%%%%%%%%%%%%%%%%%%%%%%%%%%%%% TRABALHAR
%%%%%%%%%%%%%%%%%%%%%%%%%%%%%%%%%%%%%%%%%%%%%%%%%%
\section{Perspectives}
%\section{Other results and open questions}
\label{sec:question}
%%% should we really call this section "open problems"? We do not present a list of open problems.. It seems to me it is better simply to write pesperctivess

There are many other interesting results we did not cover. Here we want to point out some of them.

A polar foliation on the sphere or in Euclidean space is the decomposition by parallel submanifolds of an isoparametric submanifolds. A polar representation is orbit equivalent to the isotropy representation of a symmetric space by Dadok \cite{Dadok}. Thorbergsson showed in \cite{Th91} that any compact, full, irreducible polar foliation of Euclidean space (i.e. an isoparametric foliation) of codimension larger 2 is the orbit foliation of a polar representation. 

There is a similar development for symmetric spaces of compact type. The classification of hyperpolar actions on irreducible symmetric spaces of compact type was achieved by Kollross. Recently, Kollross and Lytchak showed in \cite{KollrossLytchak} that on irreducible compact symmetric spaces of rank greater than one every nontrivial polar action is already hyperpolar. Polar actions on compact rank one symmetric spaces were classified by Podest\`a and Thorbergsson in \cite{ThorbPodesta}.

%%%
% BITTE oder wir  haben  ein preprint hier zu zitieren, oder bitte zitieren einfach nicht. "private comunication"' wird leider nicht beignet sein,
%%%%
%A recent result of . Fang-Grove-Thorbergsson says that every polar action of codimension larger than $2$ on a simply-connected compact manifold of nonnegative %curvature is diffeomorphic to a polar thus hyperpolar action on a symmetric space.\marginpar{ueberpruefen}

A closed polar foliation on an Hadamard manifold is diffeomorphic to the product of a compact isoparametric foliation on Euclidean space and the trivial foliation on Euclidean space with one leaf due to the third author \cite{Toeben2}. In the homogeneous case we have the following. A proper polar action of a Lie group $G$ on an Hadamard manifold is equivariantly diffeomorphic to a homogeneous bundle $G\times_K \RR^n$ over $G/K$ where $K$ is the maximal compact subgroup of $G$ and the action of $K$ on $\RR^n$ is an isotropy representation of a symmetric space, see \cite{Toeben2}.
Progress in the classification of regular hyperpolar actions on symmetric spaces of noncompact type up to isometry has been made by Berndt, Tamaru, Diaz-Ramos and others, see \cite{Berndt1,Berndt2}. 

By a work of Lytchak \cite{Lytchak09} no SRF foliation exists on a compact negatively curved manifolds (for the regular case, see Zeghib \cite{Zeghib},  for the polar case, see \cite{Toeben2} by the third author).

An important question is the classification of SRFs in the Euclidean space or the sphere. In spheres Riemannian foliations were classified by Grove-Gromoll \cite{GromollGrove} in dimension one to three, and SRFs of dimension one to three by Radeschi \cite{Radeschi}.

As remarked in Gromoll-Walshap \cite{Gromoll-Walshap} there has been increasing realization that Riemannian foliations play a key role in understanding the structure of manifolds with positive or nonnegative curvature. Wilking used the dual foliation of a SRF to show that the metric projection onto the soul is smooth.
Another result by Wilking is that there is only one dual leaf in a manifold of positive curvature, in other words, that every two points can be connected by a piece-wise smooth horizontal curve; see \cite{WilkGAFA}.

Another topic is the smooth structure of the leaf space $M/\F$.
Some natural questions and directions are pointed out in \cite{AlexLytchak}. In particular, an analogue of the Meyers-Steenrod theorem for the leaf space is proved in \cite{AlexRadeschi}. When the foliation is homogeneous, i.e., the leaf space is an orbit space, very important results have been obtained by Schwarz, see e.g. \cite{Schwarz,Schwarz2}. We hope that the desingularization of SRFs turns out to be a useful tool in the study of certain questions about the smooth structure of $M/\F$ as well as some of their topological invariants, as basic cohomology.

Finally we would like to cite the  work of  Wiesendorf \cite{Wiesendorf} where
SRF with Taut leaves are studied. In particular he shows that tautness 
of a singular Riemannian foliation  is actually a property of the quotient.

%%%%%%%%%%%%%%%%%%%%%%%%%%%%%%%%%%%%%%%%%%%%%%%%
%%%%%%%%%%%%%%%%%%%%%%%%%%%%%%%%%%%%%%%%%%%%%%%%

\bibliographystyle{amsplain}

\end{document}